%

\documentclass[11pt]{amsart}
\usepackage{amsmath,amsthm,amsfonts,amssymb,mathrsfs,epsfig,bm}
\usepackage{geometry} 
\geometry{a4paper} 
\usepackage{amstext}
\usepackage{amsmath, amsthm} 
 
\numberwithin{equation}{section}

\newtheorem{thm}{Theorem}[section]
\newtheorem{cor}[thm]{Corollary}
\newtheorem{lem}[thm]{Lemma}
\newtheorem{prop}[thm]{Proposition}

\newcommand{\R}{\mathbb{R}}
\newcommand{\He}{\mathbb{H}}
\newcommand{\Z}{\mathbb{Z}}
\newcommand{\E}{\mathbb{E}}

\usepackage{color}
\usepackage[normalem]{ulem}
\usepackage{amssymb}

\begin{document}

\title[log-Sobolev  with  Quadratic Interactions.]{ The  log-Sobolev inequality with quadratic interactions.}

\author[Ioannis Papageorgiou]{Ioannis Papageorgiou }
\thanks{\textit{Address:} Departamento de Matematica, Universitad de Buenos Aires, Pabellon II, Ciudad Universitaria Buenos Aires, Argentina.  
\\ \text{\  \   \      } \textit{Email:}  ipapageorgiou@dm.uba.ar, papyannis@yahoo.com}
\keywords{logarithmic Sobolev inequality, Gibbs measure, Spin systems, quadratic interactions.}
\subjclass[2010]{26D10, 82C22, 82B20, 35R03}

\begin{abstract}
 We assume one site measures without a boundary $e^{-\phi(x)}dx/Z$ that  satisfy a log-Sobolev inequality. We prove that if these  measures are perturbed with quadratic interactions, then the associated  infinite dimensional Gibbs measure on the lattice always satisfies a log-Sobolev inequality. Furthermore, we present examples of measures that satisfy the inequality with a phase  that goes beyond convexity at infinity.
\end{abstract}
\maketitle

\section{Introduction}  
We focus on the logarithmic Sobolev inequality for unbounded  spin systems on the d-dimensional lattice $\Z^d$ ($d\geq 1$)    with  quadratic interactions. The aim of this paper is to prove that when the single site measure without interactions  (consisting only of the phase) 
$$\mu(dx)=\frac{e^{-\phi(x)}dx}{\int e^{-\phi(x)} dx}$$  
satisfies the log-Sobolev inequality, then the Gibbs measure of the associated local specification $\left\{\E^{\Lambda,\omega}\right\}_{\Lambda\subset \Z^d,\omega \in M^{\partial \Lambda}}$, with Hamiltonian
$$H^{\Lambda,\omega}(x_\Lambda) 
:= \sum_{i \in \Lambda} \phi(x_i) 
+ \sum_{i,j \in \Lambda,\, j \sim i} J_{ij} V(x_i, x_j)
+ \sum_{i\in \Lambda,j \in \partial \Lambda,\, j \sim i} J_{ij} V(x_i, \omega_j),
$$    also satisfies a log-Sobolev inequality, when the interactions $V$ are quadratic.

Since the main condition about the phase measure does not involve the local specification  $\{\E^{\Lambda}\}$ nor  the one site measure $\mathbb{E}^{\{i\},\omega}$, we present a criterion for the infinite dimensional Gibbs measure inequality without assuming or proving  the usual Dobrushin and Shlosman's mixing conditions for the local specification as in \cite{S-Z} and more recently in \cite{M2}.  As a matter of fact, in order to control the boundary conditions involved in the interactions, we will make  use of the U-bound inequalities introduced in \cite{H-Z} to prove coercive inequalities in a standard non statistical mechanics framework. As a result we prove the inequality for a variety of phases extending beyond the usual Euclidean case,  as well as involving measures with phase like  $$\phi(x)=d^p(x)+cos(d(x))d^{p-1}(x),   \   \  p\geq2$$  that go beyond the typical convexity at infinity.

For the investigation of criteria for the logarithmic Sobolev inequality for  the infinite dimensional Gibbs measure of the local specification $\left\{\E^{\Lambda,\omega}\right\}_{\Lambda\subset \Z^d,\omega \in M^{\partial \Lambda}}$ two  main approaches  have been developed. 

The first approach is based in proving first  that the measures $\left\{\E^{\Lambda,\omega}\right\}_{\Lambda\subset \Z^d,\omega \in M^{\partial \Lambda}}$ satisfy a log-Sobolev inequality with a constant uniformly on the set $\Lambda$ and the boundary conditions $\omega$. Then the inequality for the Gibbs measure follows directly from the uniform inequality for the local specification. Criteria for the local specification to satisfy a log-Sobolev inequality uniformly on the set $\Lambda$ and the boundary conditions have been investigated by   \cite{Z2},   \cite{B},  \cite{B-E},  \cite{Y},  \cite{A-B-C}, 
 \cite{B-H} and \cite{H}.
Similar results for the weaker spectral gap inequality have been obtained by  \cite{G-R}. 

The second approach focuses in obtaining the inequality for the Gibbs measure directly, without showing first the stronger uniform inequality for the local specification. Such criteria on the local specification in the case of quadratic interactions  for the infinite-dimensional Gibbs measure on the
lattice have been investigated by    \cite{Z1}, \cite{Z2} and    \cite{G-Z}.

The problem of passing from the single site to infinite dimensional measure,
in the case of quadratic interactions,
is addressed by   \cite{M1},   \cite{I-P} and 
  \cite{O-R}. What it has been shown is that when the one site measure  $\mathbb{E}^{\{i\},\omega}$ satisfies a log-Sobolev inequality uniformly on the boundary conditions, then in the presence of quadratic  interactions the infinite Gibbs measure also satisfies a log-Sobolev inequality.

For the single-site measure  
$\mathbb{E}^{\{i\},\omega}$, necessary and sufficient  conditions   for
the log-Sobolev inequality to be satisfied uniformly over the boundary
conditions $\omega$, are also presented in    \cite{B-G}, 
  \cite{B-Z} and   \cite{R-Z}.

The scope of the current paper is to  prove the log-Sobolev inequality for the Gibbs measure without setting conditions  neither on the local specification $\{\E^{\Lambda}\}$ nor on the one site measure $\mathbb{E}^{\{i\},\omega}$. What we actually show is that in the presence of quadratic interactions, the Gibbs measure always satisfies a log-Sobolev inequality whenever the boundary free one site measure $\mu(dx)=e^{-\phi(x)}dx /(\int e^{-\phi(x)} dx)$  satisfies a log-Sobolev inequality.
 In that way we improve the previous results since the log-Sobolev inequality is determined alone by the phase  $\phi$ of the simple without interactions measure $\mu$ on $M$, for which a plethora of criteria and examples of good measure that satisfy the inequality exist.
\section{General framework and main result.}

We consider the $d$-dimensional integer lattice $\Z^d$  with 
the standard neighborhood
notion, where  two lattice points  $i, j \in \Z^d$ are considered neighbours if their lattice distance is one, i.e. they are connected with an edge, in which case we write   $i \sim j$. We will also denote $\{\sim i\}$ for the set of all neighbours of a node $i$ and $\partial \Lambda$ the boundary of a set $\Lambda\subset \Z^d$.  Our configuration space is $\Omega=M^{\Z^d}$, where $M$ is the spin space. 

We consider unbounded $n$-dimensional  spin spaces $M$ with the following structure. 
We shall assume that $M$ is a nilpotent  Lie group on
$\R^n$ with a  H\"ormander system $X^1, \ldots, X^N$, $N \le n$,
of smooth vector fields $X^k = \sum_{j=1}^n a_{kj} \frac{\partial}{\partial x_j}$,
$k=1,\ldots, N$, i.e. $a_{kj}$ are smooth functions of $x \in \R^n$.
The (sub)gradient $\nabla$ with respect   
to this structure is the vector operator $\nabla f = (X^1 f, \ldots, X^N f)$.
We consider $$\|\nabla f\|^2 := (X^1 f)^2 + \cdots + (X^N f)^2$$
When these operators refer to a  spin space $M^i$ at a node $i \in \Z^d$ 
this will be  indicated by an index $\nabla_if=(X^1 _{i}f, \ldots, X^N _{i}f)$. 
For    a subset  $\Lambda$  of  $\Z^d$ we define 
$\nabla_\Lambda := (\nabla_i, i \in \Lambda)$ and 
$$\|\nabla_\Lambda f\|^2 := \sum_{i \in \Lambda} \|\nabla_i f\|^2$$

The spin space $M$ is equipped with a metric
 $d(x,y)$ for $x,y \in M$.
 For example, in the case of  $M$ being a Euclidean space then $d$ is the Euclidean
metric or if $M$ is the Heisenberg group, 
then $d$ is the Carnot-Carath\'eodory metric. We will consider examples and applications of the main theorem  for both. 
In  all cases, for $x \in M$  we will conventionally write  $d(x)$, 
for the distance of $x$ from  $0$ $$d(x):=d(x, 0)$$ where $0$ is a specific point of $M$, for
example the origin if $M$ is $\R^n$ or the identity element of the group
when $M$ is a Lie group.
 Furthermore, we assume that there exists a $k_0>0$ such that $\|\nabla d\|\leq k_0$. For instance, in the Euclidean and the Carnot-Caratheodory metrics $k_0=1$.

A spin at a site $i\in \Z^d$    of a configuration $\omega\ \in \Omega$ will be indicated by an index, i.e. we will write $\omega_i$. This takes values in $M^i$ which is an identical copy of the spin space $M$.
For a subset $\Lambda \subset \Z^d$ we will identify $M^\Lambda$
with the Cartesian product of the $M^i$ for every $i\in\Lambda$.

The spin space $M$ is equipped with a natural measure. For example,
when $M$ is a group then we assume that the measure is one which is invariant under
the group operation, for which  we write $dx$. Again, for any $i \in \Z^d$, we use a subscript to indicate   the  natural measure $dx_i$ on
$M^i$. In the case of a Euclidean space or the Heisenberg group for instance, this is the Lebesgue measure. For the product measure derived from  the $dx_i$, $i \in \Lambda$ we will write   $dx_\Lambda:=\otimes _{i\in \Lambda}dx_{i}$.  The measures of the local specification  $\left\{\E^{\Lambda,\omega}\right\}$ for $\Lambda\subset \Z^d$ and $\omega \in M^{\partial \Lambda}$, are defined as\[
\E^{\Lambda, \omega} (dx_\Lambda) = \frac{1}{Z^{\Lambda,\omega}}\,
e^{-H^{\Lambda,\omega}(x_\Lambda)}\, dx_\Lambda,
\]
where $Z^{\Lambda,\omega}$ is a normalization constant. The Hamiltonian function $H^{\Lambda, \omega}$ has
the form
\[
H^{\Lambda,\omega}(x_\Lambda) 
:= \sum_{i \in \Lambda} \phi(x_i) 
+ \sum_{i,j \in \Lambda,\, j \sim i} J_{ij} V(x_i, x_j)
+ \sum_{i\in \Lambda,j \in \partial \Lambda,\, j \sim i} J_{ij} V(x_i, \omega_j),
\]
We call $\phi$ the phase and $V$ the interaction. In this work we consider exclusively  quadratic interactions $V$, i.e. 
\begin{align}\label{quadratic}\nonumber \left\vert V(x_i,\omega_j)\right\vert
&\leq kd^2(x_i)+k d^2(\omega_j) \\  & \text{and}\\  \nonumber\|\nabla_iV(x_i,\omega_j)\|^2 &\leq kd^2(x_i)+k d^2(\omega_j) \end{align} for some $k\geq 1$.  We will assume that there exists a $J$ such that $\left\vert J_{ij}\right\vert\leq J$ and that $J_{ij}V(x_i,x_j)\geq 0$.

For a function  $f$  from $M^{\Z^d}$ into $\R$,
we will conventionally write   $\E^{\Lambda, \omega} f$ for the expectation of $f$ with respect  
to $\E^{\Lambda, \omega}$.
For economy we will frequently omit the boundary conditions and  we will write  $\E^\Lambda f$ instead of $\E^{\Lambda, \omega}$.

The measures of the local specification obey the Markov property 
\[
\E^\Lambda \E^K f= \E^\Lambda f, \quad K \subset \Lambda.
\]
We say that the probability measure $\nu$ on $\Omega = M^{\Z^d}$
is an infinite volume Gibbs measure for the local
specifications $\{\E^{\Lambda,\omega}\}$ if
it satisfies the  Dobrushin-Lanford-Ruelle equation:
\[
\nu \mathbb{E}^{\Lambda,\bullet}=\nu, \quad \Lambda \Subset \Z^d,
\]
 We refer to   \cite{Pr}, \cite{B-HK} and \cite{D} for details.
Throughout the paper we shall assume that we are in the case
where $\nu$ exists   (uniqueness will be
deduced from our   results, see Proposition \ref{7prop2}).  Furthermore, we will consider functions $f:M^{\Z^d}\rightarrow\R$ such that $fd\in L_2(\nu)$.

The  main interest  of the paper is the logarithmic Sobolev inequality. We say that a  probability measure $\mu$ in $M$ satisfies the logarithmic Sobolev inequality, if  there exists a constant $c>0$ such that
\begin{align}
\label{LS}
\mu\bigg( f^2 \log \frac{f^2}{\mu (f^2)}
\bigg) \le c\mu\|\nabla f\|^2
\end{align} 
We notice two important properties for the log-Sobolev inequality. The first is that it implies the  
spectral gap inequalities, that is,     there exists a constant $c'<c$ such that  $$\mu\ |f-\mu f|^2 \le c'\, \mu\ \|\nabla f\|^2$$
The second is that    both the log-Sobolev inequality and the spectral gap inequality are retained under product measures. Proofs of these two assertions can be found in  
Gross \cite{G}, Guionnet and Zegarlinski \cite{G-Z} and 
Bobkov and Zegarlinski \cite{B-Z}.

Under the spin system framework  the   log-Sobolev inequality for the local specification $\{\E^{\Lambda,\omega}\}$ takes the form 
\begin{align}
\label{LSE}
\E^{\Lambda,\omega}\bigg( f^2 \log \frac{f^2}{\mathbb{E}^{\Lambda,\omega} f^2}
\bigg) \le c\, \mathbb{E}^{\Lambda,\omega} \|\nabla_\Lambda f\|^2  
\end{align}
where the constant $c$ is now required uniformly
on the subset $\Lambda$ and the boundary conditions  $\omega \in \partial \Lambda$. In the special case where $\Lambda=\{i\}$ then the constant is considered uniformly on the boundary conditions $\omega \in \{ \sim i\}$. 
The analogue log-Sobolev inequality for the infinite volume Gibbs measure $\nu$ is then defined as 
\begin{align}
\label{LSG}
\nu\bigg( f^2 \log \frac{f^2}{\nu f^2}
\bigg) \le c\, \nu \|\nabla_{\Z^d} f\|^2  
\end{align}
The aim of this paper is to  show that the infinite volume Gibbs measure $\nu$ satisfies the log-Sobolev inequality (\ref{LSG}) for an appropriate constant. As explained in the introduction, in the case of quadratic interactions, previous works concentrated in proving first the stronger (\ref{LSE}) for all  $\Lambda\subset \Z^d$,   or assumed the log-Sobolev inequality (\ref{LSE}) for the one site $\Lambda=i$ and then  derived from these (\ref{LSG}).

Our aim is to show that    if we assume the weaker inequality (\ref{LS}) for the phase measure $\mu(dx)=\frac{e^{-\phi(x)}dx}{\int e^{-\phi(x)} dx}$, then in the presence of quadratic interaction this is sufficient to obtain directly the log-Sobolev inequality for the Gibbs measure (\ref{LSG}),    without the need to assume or prove any of the stronger inequalities (\ref{LSE}) that require uniformity on the boundary conditions and/or  the dimension of the measure.

The first result of the paper follows:

\begin{thm}\label{theorem}
 Assume   that the measure $\mu(dx)=\frac{e^{-\phi(x)}dx}{\int e^{-\phi(x)} dx}$  in $ M$ satisfies the log-Sobolev inequality and that the  local specification $\{\mathbb{E}^{\Lambda,\omega}\}$ has quadratic interactions $V$ as in (\ref{quadratic}). Then   for $J$ sufficiently small    the infinite dimensional Gibbs measure in $M^{\Z^d}$ satisfies a log-Sobolev inequality.\end{thm}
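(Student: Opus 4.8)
The plan is to establish (\ref{LSG}) directly, bypassing the uniform local-specification inequality (\ref{LSE}), by combining a single-site logarithmic Sobolev inequality with U-bound estimates that absorb the unbounded boundary contributions produced by the quadratic interaction. The natural starting point is the single-site conditional measure
\[
\E^{\{i\},\omega}(dx_i) \;\propto\; \exp\!\Big(-\phi(x_i) - \sum_{j\sim i} J_{ij}V(x_i,\omega_j)\Big)\,dx_i ,
\]
which is a perturbation of $\mu$ by the interaction term $W_i(x_i):=\sum_{j\sim i}J_{ij}V(x_i,\omega_j)$. Since $V$ is only quadratically bounded, $W_i$ is unbounded both in $x_i$ and in the boundary spins $\omega_j$, so the classical Holley--Stroock bounded-perturbation principle fails and the single-site inequality cannot be expected to hold with a constant uniform in $\omega$. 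Resolving this is exactly the role of the U-bounds.

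First I would record the U-bound consequences of the hypothesis that $\mu$ satisfies (\ref{LS}). Following \cite{H-Z}, the log-Sobolev inequality for $\mu$ yields a coercive estimate of the type
\[
\mu\big(f^2\,d^2\big) \le \varepsilon\,\mu\|\nabla f\|^2 + C_\varepsilon\,\mu(f^2),
\]
together with its conditional analogues for the perturbed single-site potentials $\phi+W_i$ and for $\nu$, in which a quadratic weight $d^2(x_i)$ is controlled by the Dirichlet energy $\|\nabla_i f\|^2$ plus a lower-order $L^2$ term. The quadratic bounds (\ref{quadratic}) on $|V|$ and $\|\nabla_i V\|^2$, together with the sign condition $J_{ij}V\ge 0$ (which makes $W_i\ge 0$, hence more confining), are precisely what is needed to dominate $W_i$ and its gradient by such weights. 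Feeding these U-bounds into the entropy-perturbation argument then produces a single-site inequality of defective type,
\[
\E^{\{i\},\omega}\!\Big(f^2\log\tfrac{f^2}{\E^{\{i\},\omega}f^2}\Big)
\le c_1\,\E^{\{i\},\omega}\|\nabla_i f\|^2
+ c_2\, J \sum_{j\sim i}\E^{\{i\},\omega}\!\big(f^2\,(d^2(x_i)+d^2(\omega_j))\big),
\]
with $c_1,c_2$ independent of $\omega$. The key gain is that the boundary dependence is now carried only by the explicit, quadratically growing terms $d^2(\omega_j)$, rather than hidden inside the constant.

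Finally comes the assembly over the lattice, carried out on an increasing sequence of finite boxes $\Lambda_n\uparrow\Z^d$ and then passed to the limit. Using the DLR equations $\nu\E^{\{i\},\bullet}=\nu$ and the Markov property, I would telescope the entropy of $f^2$ under $\nu$ into a sum of single-site conditional entropies, apply the defective inequality above at each node, and integrate the resulting terms against $\nu$. The self-interaction piece $d^2(x_i)$ is absorbed on the spot by the single-site U-bound into $\|\nabla_i f\|^2$; the genuinely nonlocal piece $d^2(\omega_j)=d^2(x_j)$, now integrated against $\nu$, is re-absorbed by the U-bound at the neighbouring site $j$, each such absorption producing a neighbouring energy $\|\nabla_j f\|^2$ with a coefficient proportional to $J$. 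Summing over $i\in\Z^d$ yields a system of inequalities whose coupling is encoded by an operator of norm $\lesssim J\cdot\#\{\sim i\}$; for $J$ sufficiently small this norm is strictly less than one, so the cross terms may be reabsorbed (equivalently, the associated geometric/Neumann series converges), leaving the clean inequality (\ref{LSG}).

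The hard part is this last step together with the uniformity claim in the second: one must verify that the unbounded boundary contributions genuinely cancel after averaging against $\nu$, and that the accumulated corrections form a convergent series rather than a divergent sum over the infinitely many sites of $\Z^d$. This is exactly where both the U-bounds and the smallness of $J$ are indispensable, and where the argument departs from the earlier works that instead required the uniform single-site inequality (\ref{LSE}).
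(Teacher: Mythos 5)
Your first two steps track the paper closely: the U-bound $\nu(d^2(x_k)f^2)\le C\nu(f^2)+C\sum_n J^n\sum_{dist(j,k)=n}\nu\|\nabla_j f\|^2$ is derived in the paper exactly as you sketch (entropic inequality plus exponential integrability of $d^2$ under $\mu$ via Aida--Stroock, then perturbation by $V^i$ and an iteration over the lattice that converges for small $J$), and the defective single-site log-Sobolev inequality with explicit boundary terms $d^2(\omega_j)$ is the content of the paper's Proposition \ref{6prop1}. So the analysis of a single site is essentially right.

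The gap is in the assembly step. You propose to ``telescope the entropy of $f^2$ under $\nu$ into a sum of single-site conditional entropies'' and then apply the defective inequality ``at each node''. For a non-product Gibbs measure this telescoping is not the tensorization inequality $\mathrm{Ent}_\nu(f^2)\le\sum_i\nu[\mathrm{Ent}_{\E^{\{i\}}}(f^2)]$ (which is false in general); the exact decomposition is $\mathrm{Ent}_\nu(f^2)=\sum_k\nu\big[\mathrm{Ent}_{\E^{a_k}}(\E^{a_{k-1}}\cdots\E^{a_1}f^2)\big]$, in which the $k$-th entropy is taken of an \emph{iterated conditional expectation} of $f^2$, not of $f^2$ itself. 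Applying your defective inequality there produces Dirichlet forms of $(\E^{a_{k-1}}\cdots\E^{a_1}f^2)^{1/2}$, and the real difficulty --- which your Neumann-series accounting of the coefficients does not address --- is to bound $\nu\|\nabla_r(\E^{j}f^2)^{1/2}\|^2$ back in terms of $\nu\|\nabla f\|^2$ with a contraction factor. This is what the paper's two families of sweeping-out inequalities (Sections \ref{section4} and \ref{secLS}) accomplish, and it requires the covariance estimates, the Poincar\'e inequality on balls, and the spectral gap type inequality of Lemma \ref{teleutSpect1}; it is the technical core of the proof. The paper also needs a structural device you omit: the partition of the lattice into two sublattices $\Gamma_0,\Gamma_1$ with no internal bonds, so that $\E^{\Gamma_0}$ and $\E^{\Gamma_1}$ are product measures and the entropy can be decomposed along the alternating iteration $\mathcal{P}^n$, whose $\nu$-a.e.\ convergence to $\nu f$ (Proposition \ref{7prop2}, itself resting on a spectral gap for $\E^{\Gamma_k}$) is what closes the argument. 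Without the sweeping-out estimates and the two-block iteration, the claim that ``the cross terms may be reabsorbed'' for small $J$ remains an assertion rather than a proof.
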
 
Since the main hypothesis of the theorem refers just to the measure $\mu(dx)=\frac{e^{-\phi(x)}dx}{\int e^{-\phi(x)} dx}$ satisfying a logarithmic Sobolev inequality, we can take all the probability measures from $\R^n$ that satisfy a log-Sobolev inequality and get measures on the statistical mechanics framework of spin systems on  the lattice $\Z^d$ just by adding quadratic interactions as described in (\ref{quadratic}).

From the plethora of theorems and criteria that have been developed for the  Euclidean  $\R^n$ for $n\geq1$, among others in \cite{B-L}, \cite{B}, \cite{B-E}, \cite{B-G} and \cite{B-Z}  one can  generalise these to the spin system framework just by applying them to the phase $\phi$ and then add quadratic interactions $V$. As a typical example, one can then for instance obtain for then 
 Euclidean  space $n\geq1$ with $d$  the Euclidian metric,  $X_i=\frac{\partial}{\partial x_j}$ and $dx_i$ the Lebesgue measure, the following example of measures: Consider the phase $\phi(c)=\| x \|_p^p$ for any $p\geq 2$ and interactions $V(x,y)=\| x-y \|_2^2$. Then the associated Gibbs measure satisfies a logarithmic Sobolev inequality.

Furthermore, as  will be described in Theorem \ref{theorem2} that follows, with additional assumptions on the distance and the gradient we can obtain results comparable to the once obtained in \cite{H-Z} for general metric spaces.   We consider general $n$-dimensional non compact metric spaces. For the distance $d$  and the (sub)gradient   $\nabla$, in addition to the hypothesis of Theorem \ref{theorem}   we assume that $$(D1):\frac{1}{\sigma}<\vert \nabla d \vert\leq 1    $$ for some $\sigma \in [1,\infty)$, and $$(D2):\Delta d\leq K  $$ outside the unit ball $\{d(x)<1\}$ for some $K\in(0,+\infty)$. We also assume that the gradient $\nabla$ satisfies the integration by parts formula. In the case of $\nabla f = (X^1 f, \ldots, X^N f)$ with vector fields  $X^k = \sum_{j=1}^n a_{kj} \frac{\partial}{\partial x_j}$ it suffices to request that $a_{kj}$ is a function of $x \in \R^d$
not depending on the $j$-th coordinate $x_j$.

 If  $dx$ is the $n-$dimensional Lesbegue measure we assume that it satisfies the  Classical-Sobolev inequality (C-S)   
$$ \left(\int \vert f\vert ^{2+\epsilon}dx\right)^\frac{2}{2+\epsilon}\leq\alpha \int\|\nabla f  \|^2dx +\beta\int \vert f \vert ^2dx  \   \   \   \   \   \   \   \   \   \       \text{(C-S)}$$ for positive constants $\alpha, \beta$, as well as    the Poincar\'e inequality on the ball $B_R$, that is there exists a constant $c_R\in(0,\infty)$ such that \begin{align*}\frac{1}{\vert B_R\vert}\int_{B_R}\left \vert f-\frac{1}{\vert B_R\vert}\int_{B_R}f\right \vert^2dx\leq c_R\frac{1}{B_R} \int_{B_R} \| \nabla f  \|^2dx \   \   \   \   \   \   \   \       \text{(L-P)} \end{align*}
  The Classical Sobolev inequality (C-S)  is for instance satisfied in the case of the $\mathbb{R}^n,n\geq 1$ with $d$ being the Eucledian distance, as well as for the case of the Heisenberg group, with $d$ being the Carnot-Carath\'eodory distance.   The Poincar\'e inequality on the ball for the Lebesgue measure (L-P) is a standard result for $n\geq 3$ (see for instance \cite{H}, \cite{H-Z}, \cite{D} and \cite{V-SC-C}), while for $n=1,2$ one can look on  \cite{Pa2}.  Under this framework, if
 we  combine our main result  Theorem \ref{theorem}, together with
  Corollary 3.1 and Theorem 4.1 from \cite{H-Z} we  obtain the  following theorem
\begin{thm}\label{theorem2}Assume  distance $d$  and the (sub)gradient   $\nabla$  are such that (D1)-(D2) as well as  (C-S) and (L-P) are satisfied. Let a probability measure $\mu(dx)=\frac{e^{-\phi(x)}dx}{\int e^{-\phi(x)}dx}$, where $dx$ the Lebesgue measure, such that  $$\phi(x)= W(x)+B(x)$$ defined with a differential potential $W$ satisfying 
\begin{align*}\|\nabla W \|^q \leq \delta d^p+\gamma
\end{align*}with $p\geq 2$ and $q$ the conjugate of $p$,  and  suppose that $B$ is a measurable function such that $osc(B)=\max B-\min B<\infty$. Assume that the  local specification $\{\mathbb{E}^{\Lambda,\omega}\}$ has quadratic interactions $V$ as in (\ref{quadratic}). Then    for $J$ sufficiently small    the infinite dimensional Gibbs measure satisfies a log-Sobolev inequality.
\end{thm}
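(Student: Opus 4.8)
The plan is to prove Theorem \ref{theorem2} purely as a reduction: I would verify that under the stated hypotheses the single-site measure $\mu(dx)=e^{-\phi(x)}dx/\int e^{-\phi(x)}dx$ satisfies the log-Sobolev inequality \eqref{LS}, and then invoke Theorem \ref{theorem} to transfer this to the infinite-dimensional Gibbs measure in the presence of the quadratic interactions. All of the statistical-mechanics content (the passage from one site to $M^{\Z^d}$ and the smallness of $J$) is already supplied by Theorem \ref{theorem}, so the only thing left to establish is the coercive inequality for $\mu$ on the single spin space $M$.

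To obtain the single-site log-Sobolev inequality I would feed the hypotheses into the U-bound machinery of \cite{H-Z}. First, I would split the phase as $\phi=W+B$ and treat $B$ by a bounded-perturbation (Holley--Stroock type) argument: since $osc(B)<\infty$, the log-Sobolev inequality for $e^{-W}dx/Z$ implies the same inequality for $e^{-\phi}dx/Z$ with the constant degraded only by a factor controlled by $e^{osc(B)}$. This reduces matters to the smooth potential $W$. For $W$, the growth condition $\|\nabla W\|^q\leq \delta d^p+\gamma$, with $q$ the conjugate exponent of $p$, is exactly the input of Corollary 3.1 of \cite{H-Z}: combined with the distance assumptions (D1)--(D2) and the integration-by-parts property of $\nabla$ (guaranteed by the coordinate-independence of the coefficients $a_{kj}$), it yields a U-bound of the form $\mu(|f|^2\,U)\leq C\,\mu\|\nabla f\|^2+D\,\mu(|f|^2)$ with a confining weight $U$ that grows at infinity at the rate dictated by $p$.

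With the U-bound in hand, I would apply Theorem 4.1 of \cite{H-Z}, whose role is to upgrade it to a genuine logarithmic Sobolev inequality once the ambient space carries a classical Sobolev inequality (C-S) and a local Poincar\'e inequality on balls (L-P). The assumption $p\geq 2$ is what makes this step produce the full log-Sobolev inequality rather than a weaker coercive inequality: it guarantees that the weight $U$ grows at least quadratically, which is the borderline growth at which the entropy can be absorbed using (C-S). At this point $\mu$ satisfies \eqref{LS}, and Theorem \ref{theorem} closes the argument.

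The genuinely new ingredient is Theorem \ref{theorem}; the rest is a matter of checking that the hypotheses line up. Accordingly, the main point requiring care---and the step I expect to be the real obstacle in a careful write-up---is verifying that the geometric and analytic assumptions stated here, namely (D1)--(D2), the coordinate-independence requirement on the $a_{kj}$ ensuring integration by parts, (C-S) and (L-P), are precisely those under which the U-bound derivation of Corollary 3.1 and the U-bound-to-LSI implication of Theorem 4.1 of \cite{H-Z} are valid, and that the splitting $\phi=W+B$ is simultaneously compatible with all of them. Once this matching is confirmed the conclusion is immediate.
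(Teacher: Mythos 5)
Your proposal is correct and follows exactly the route the paper takes: the paper's entire justification of Theorem \ref{theorem2} is the remark that it follows by combining Corollary 3.1 and Theorem 4.1 of \cite{H-Z} (which give the log-Sobolev inequality for the single-site measure $\mu$ under (D1)--(D2), (C-S), (L-P) and the growth condition on $\|\nabla W\|^q$, with $B$ absorbed as a bounded perturbation) with Theorem \ref{theorem}. Your write-up is in fact a more detailed account of this reduction than the paper itself provides.
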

 
An interesting application of the last theorem  is the  special case of the   Heisenberg group, $\mathbb{H}$. This can be described as $\mathbb{R}^3$ with the following group operation:
$$x\cdot\tilde{x} = (x_1, x_2, x_3)\cdot(\tilde{x}_1, \tilde{x}_2, \tilde{x}_3) = (x_1 + \tilde{x}_1, x_2 + \tilde{x}_2, x_3 + \tilde{x}_3 + \frac{1}{2}(x_1\tilde{x}_2 - x_2\tilde{x}_1))$$
$\mathbb{H}$ is a Lie group, and its Lie algebra $\mathfrak{h}$ can be identified with the space of left invariant vector fields on $\mathbb{H}$ in the standard way.  The vector fields\begin{eqnarray}
X_1 &=& \partial_{x_1} - \frac{1}{2}x_2\partial_{x_3} \nonumber \\
X_2 &=& \partial_{x_2} + \frac{1}{2}x_1\partial_{x_3} \nonumber \\
X_3 &=& \partial_{x_3} = [X_1, X_2] \nonumber
\end{eqnarray}
form a Jacobian  basis,   where $\partial_{x_i}$ denoted derivation with respect to $x_i$.
From this it is clear that $X_1, X_2$ satisfy the H\"ormander condition 
(i.e., $X_1, X_2$ and their commutator $[X_1, X_2]$ span the tangent space 
at every point of $\He_1$).  The sub-gradient is given by
$$\nabla := (X_1,X_2)$$
   For more details one  can look at \cite{B-L-U}.   In \cite{I-P} a first example of a measure on the Heisenberg group with a Gibbs measure that satisfies   a logarithmic Sobolev inequality was presented. Here, with the use of Theorem \ref{theorem2} we can obtain examples with a phase $\phi$ that is nowhere convex and include  more natural quadratic interactions.  Such an example, that satisfies the conditions of Theorem \ref{theorem2} with a phase that goes beyond convexity at infinity is the following:
$$\phi(x)=d^p(x)+cos(d(x))d^{p-1}(x)$$ and $$V(x,y)=d^{2}(x \cdot\ y^{-1})$$ where $\cdot$ the group operation and $y^{-1}$ the inverse in respect to this operation.

The proof of Theorem \ref{theorem}  is divided   into two parts presented on the next two propositions \ref{proposition} and \ref{propUbound}. In the first  one, we prove a weaker assertion, that the claim of Theorem \ref{theorem} is true  under the conditions of Theorem \ref{theorem} together with the U-bound inequality (\ref{Ubound}).    
  \begin{prop}\label{proposition} Assume   that   the measure $\mu(dx)=\frac{e^{-\phi(x)}dx}{\int e^{-\phi(x)} dx}$ satisfies the log-Sobolev inequality and  that the  local specification $\{\mathbb{E}^{\Lambda,\omega}\}$ has quadratic interactions $V$ as in (\ref{quadratic}). Furthermore, assume that       there exists a $C\geq 1$ such that  the following U-bound inequality is satisfied
\begin{align}\label{Ubound}  \nu ( d^2(x_k)f^2)\leq\   C  \nu( f^{2})+C \sum_{n=0}^{\infty} J^{n}\sum_{j:dist(j,k)=n}\   \nu(\|\nabla_{j} f \|^2   )
\end{align} Then for $J$ sufficiently small the infinite dimensional Gibbs measure satisfies the log-Sobolev inequality. \end{prop}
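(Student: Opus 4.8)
The plan is to reduce the infinite-volume inequality to the single-site inequality furnished by the hypothesis on $\mu$, by means of an alternating even/odd conditioning scheme, and to use the U-bound (\ref{Ubound}) together with the smallness of $J$ to control the errors generated by the interaction at each step. The first ingredient is a single-site log-Sobolev inequality for the conditional measures $\E^{\{i\},\omega}$. Such a measure is the perturbation of $\mu$ by the potential $U_i^\omega(x_i)=\sum_{j\sim i}J_{ij}V(x_i,\omega_j)$, which by (\ref{quadratic}) has gradient controlled by $\|\nabla_i U_i^\omega\|^2\le k\sum_{j\sim i}J_{ij}^2\big(d^2(x_i)+d^2(\omega_j)\big)$. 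Since $\mu$ satisfies (\ref{LS}) and $\|\nabla d\|\le k_0$, it has Gaussian concentration for $d$, so that $\mu(d^2 f^2)$ is dominated by $\mu\|\nabla f\|^2$ and $\mu f^2$; feeding this bound into the standard perturbation argument for the log-Sobolev inequality controls the unbounded perturbation and yields $\E^{\{i\},\omega}\big(f^2\log\frac{f^2}{\E^{\{i\},\omega}f^2}\big)\le c\,\E^{\{i\},\omega}\|\nabla_i f\|^2$, with $c$ governed by the constant of $\mu$ and the $d^2(\omega_j)$ contributions surviving only as boundary corrections to be reabsorbed later through (\ref{Ubound}).

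Next I would split $\Z^d$ into even and odd sublattices. As the interaction couples only nearest neighbours, conditioning on the odd spins makes $\E^{\text{even}}$ a product of single-site measures, and symmetrically for $\E^{\text{odd}}$; by the tensorisation property recalled after (\ref{LS}), both product measures inherit the single-site inequality with the same constant. Using the tower property of the entropy together with $\nu\E^{\text{even}}=\nu$, I would write $\nu\big(f^2\log\frac{f^2}{\nu f^2}\big)=\nu\big(\E^{\text{even}}(f^2\log\frac{f^2}{\E^{\text{even}}f^2})\big)+\nu\big((\E^{\text{even}}f^2)\log\frac{\E^{\text{even}}f^2}{\nu f^2}\big)$. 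The first summand is at once bounded by $c\sum_{i\,\text{even}}\nu\|\nabla_i f\|^2$, and the second is treated by repeating the decomposition with the odd sublattice, alternating even and odd indefinitely.

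The crux is the control of $\|\nabla_i\E^{\text{even}}f^2\|$ for odd $i$, which enters the Dirichlet form of $\sqrt{\E^{\text{even}}f^2}$ at the odd step. Differentiating the conditional measure gives, besides the harmless term $\E^{\text{even}}(2f\nabla_i f)$ (contributing only $\E^{\text{even}}\|\nabla_i f\|^2$ by Cauchy--Schwarz), a covariance $\mathrm{Cov}_{\E^{\text{even}}}\big(f^2,\sum_{j\sim i}J_{ij}\nabla_i V\big)$, in which the single-site phase $\nabla_i\phi(x_i)$ drops out because it does not depend on the even spins. By (\ref{quadratic}) this covariance is of order $J$ and, once squared, is bounded by $J^2$ times expressions of the form $d^2 f^2$ and $d^2\|\nabla f\|^2$. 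The terms of the first type are exactly what (\ref{Ubound}) is designed to absorb, replacing each $\nu(d^2(x_k)f^2)$ by $\nu f^2$ plus a $J$-weighted sum of Dirichlet terms spread over the lattice, while the terms of the second type are reabsorbed into the Dirichlet form for $J$ small.

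Collecting the contributions of all the iteration steps produces an expansion whose leading part sums to $c\,\nu\|\nabla_{\Z^d}f\|^2$ and whose remainder is a series in powers of $J$ weighted by the number of sites at each lattice distance. The main obstacle is the convergence of this series: one must balance the powers of $J$ produced by the covariance terms against both the $\sum_{n}J^{n}\sum_{\mathrm{dist}(j,k)=n}$ structure of (\ref{Ubound}) and the polynomial growth of the number of lattice points at distance $n$, and one must check that the remainder entropy of $(\E^{\text{odd}}\E^{\text{even}})^{m}f^2$ tends to $0$ as $m\to\infty$. For $J$ sufficiently small all these series converge and the remainder vanishes; any residual defect of the form $\nu f^2$ produced when applying (\ref{Ubound}) is then removed by a standard tightening via the spectral gap inequality, yielding the tight log-Sobolev inequality (\ref{LSG}) for $\nu$.
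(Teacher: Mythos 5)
Your proposal follows essentially the same route as the paper: the alternating even/odd (here $\Gamma_0/\Gamma_1$) conditioning of Zegarlinski, a single-site entropy estimate obtained by perturbing the log-Sobolev inequality for $\mu$, absorption of the quadratic interaction through the U-bound (\ref{Ubound}), control of $\|\nabla_i(\E^{\Gamma_k}f^2)^{1/2}\|$ via a covariance computation (the sweeping-out inequalities), and a final convergence argument for the iterates. Two steps, however, are overstated in a way that matters. First, the displayed uniform single-site inequality $\E^{\{i\},\omega}\big(f^2\log\frac{f^2}{\E^{\{i\},\omega}f^2}\big)\le c\,\E^{\{i\},\omega}\|\nabla_i f\|^2$ does not follow from the hypotheses: the perturbation $V^i=\sum_{j\sim i}J_{ij}V(x_i,\omega_j)$ has unbounded oscillation in $x_i$ and a constant $c(\omega)$ produced by any perturbation argument would degenerate as $\sum_{j\sim i}d(\omega_j)\to\infty$. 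Avoiding exactly this uniformity is the point of the paper; what one can prove (Proposition \ref{6prop1}) is the weaker, $\nu$-averaged inequality with additional Dirichlet terms $\sum_n J^{n-1}\sum_{dist(r,i)=n}\nu\|\nabla_r f\|^2$, obtained by interpolating the inequality for $\mu$, then centering $f$ via Rothaus' estimate (\ref{6eq6}) so that the resulting $\nu\big(|f-\E^{i}f|^2 d^2(\cdot)\big)$ terms can be fed into the U-bound. Without the centering, the U-bound leaves a residual $\nu f^2$ that destroys tightness, and your "standard tightening via the spectral gap inequality" at the very end is circular unless you say which measure's spectral gap you are invoking and where it comes from.

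This points to the second, related gap: the spectral gap type inequality for the conditional one-site measure under $\nu$ (Lemma \ref{teleutSpect1}) is not free. It is needed both for the Rothaus tightening just described and for bounding the squared covariances $|\E^{j}(f^2;\nabla_i V)|^2$ in your "crux" step, since after Cauchy--Schwarz these produce $\E^{j}\big(|f-\E^{j}f|^2 d^2(\cdot)\big)$ rather than raw Dirichlet forms. The paper has to establish this spectral gap from scratch via a local Poincar\'e inequality on the balls $\{d(x_i)+\sum_{j\sim i}d(\omega_j)\le L\}$ (Lemma \ref{PoincareBall}) combined with a tail estimate outside the ball that again uses the U-bound (Lemma \ref{PoincareNOBall}); this uses the group/metric structure of $M$ and is one of the genuinely technical parts of the argument. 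Your sketch is structurally sound and the bookkeeping of the $J$-weighted sums against the $4^n$ growth of spheres is correctly identified as the convergence issue, but as written the proof is incomplete without (i) the non-uniform reformulation of the single-site entropy estimate and (ii) an actual derivation of the one-site spectral gap under $\nu$.
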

The proof of this proposition  will be presented in section \ref{finalproof}.
 Then Theorem \ref{theorem} follows from Proposition \ref{proposition} and the next proposition  which states that the conditions of  Theorem \ref{theorem} imply the U-bound inequality (\ref{Ubound})  of Proposition \ref{proposition}. 
\begin{prop}\label{propUbound}
 Assume   that the measure $\mu(dx)=\frac{e^{-\phi(x)}dx}{\int e^{-\phi(x)} dx}$ satisfies the log-Sobolev inequality and  that the  local specification $\{\mathbb{E}^{\Lambda,\omega}\}$ has quadratic interactions $V$ as in (\ref{quadratic}).  Then for $J$ sufficiently small   the Gibbs measure satisfies the  U-bound inequality (\ref{Ubound}). \end{prop}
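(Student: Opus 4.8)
The plan is to derive the U-bound for the Gibbs measure from a single-site U-bound for the phase measure $\mu$, and then to propagate it across the lattice by resumming in the coupling $J$. First I would extract a one-site U-bound for $\mu$ itself. Since $d$ is Lipschitz with $\|\nabla d\|\le k_0$, the log-Sobolev inequality (\ref{LS}) yields Gaussian concentration for $d$ by the Herbst argument, hence $\mu(e^{\lambda d^2})<\infty$ for all sufficiently small $\lambda>0$. Combining the entropy duality bound $\lambda\,\mu(d^2 g^2)\le \mathrm{Ent}_\mu(g^2)+\mu(g^2)\log\mu(e^{\lambda d^2})$ with (\ref{LS}) to control the entropy gives
$$\mu(d^2 g^2)\le A\,\mu(\|\nabla g\|^2)+B\,\mu(g^2),$$
with $A=c/\lambda$ and $B=\lambda^{-1}\log\mu(e^{\lambda d^2})$.

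Next I would transfer this bound to the one-site conditional measure $\E^{\{k\},\omega}$, whose density with respect to $\mu$ in the variable $x_k$ is proportional to $e^{-I_k}$, where $I_k(x_k,\omega)=\sum_{j\sim k}J_{kj}V(x_k,\omega_j)\ge 0$. Applying the $\mu$-U-bound to $g=f\,e^{-I_k/2}$ and expanding the gradient produces a term proportional to $f^2\|\nabla_k I_k\|^2$; by the quadratic gradient estimate in (\ref{quadratic}) and Cauchy--Schwarz over the at most $2d$ neighbours, $\|\nabla_k I_k\|^2\le C J^2\bigl(d^2(x_k)+\sum_{j\sim k}d^2(\omega_j)\bigr)$. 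For $J$ small the regenerated $d^2(x_k)f^2$ term is absorbed into the left-hand side, leaving the conditional U-bound
$$\E^{\{k\},\omega}(d^2(x_k)f^2)\le C\,\E^{\{k\},\omega}(\|\nabla_k f\|^2)+C\,\E^{\{k\},\omega}(f^2)+CJ^2\sum_{j\sim k}d^2(\omega_j)\,\E^{\{k\},\omega}(f^2).$$

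Finally I would integrate against $\nu$ and use the DLR equation $\nu\E^{\{k\},\bullet}=\nu$, which turns the frozen weights $d^2(\omega_j)$ back into lattice variables $d^2(x_j)$ and yields the one-step recursion
$$\nu(d^2(x_k)f^2)\le C\,\nu(\|\nabla_k f\|^2)+C\,\nu(f^2)+CJ^2\sum_{j\sim k}\nu(d^2(x_j)f^2).$$
Iterating this inequality moves the $d^2$ weight one lattice step per step at the cost of a factor $CJ^2$ and a sum over neighbours; since there are at most $(2d)^n$ paths of length $n$ from $k$, choosing $J$ small enough that $2d\,CJ^2<1$ makes the geometric resummation converge. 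The gradient contributions then organize into $C\sum_{n\ge 0}J^n\sum_{dist(j,k)=n}\nu(\|\nabla_j f\|^2)$ and the $\nu(f^2)$ contributions collapse to a single $C\,\nu(f^2)$, which is precisely (\ref{Ubound}).

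The main difficulty lies in the transfer step: because the interaction grows quadratically, applying the $\mu$-U-bound to $f e^{-I_k/2}$ regenerates a $d^2(x_k)f^2$ term of exactly the type being estimated, so only the smallness of $J$ permits its absorption, and one must verify that the genuinely boundary part separates into the clean form $\sum_{j\sim k}d^2(\omega_j)$. In the resummation step the remaining subtlety is to control the remainder after $N$ iterations; its vanishing uses $2d\,CJ^2<1$ together with the standing assumption $fd\in L^2(\nu)$, which guarantees $\nu(d^2(x_j)f^2)<\infty$, and is cleanest to justify by first treating functions with controlled coordinate dependence and then passing to the limit.
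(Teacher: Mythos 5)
Your proposal follows essentially the same route as the paper: exponential integrability of $d^2$ under $\mu$ from the log-Sobolev inequality (Herbst/Aida--Stroock), the entropic duality bound to get a single-site U-bound for $\mu$, transfer to $\E^{\{k\},\omega}$ via the substitution $f\mapsto fe^{-V^k/2}$ with absorption of the regenerated $d^2(x_k)f^2$ term for small $J$, integration against $\nu$ to obtain the neighbour recursion, and a geometric resummation over the lattice (the paper's Lemma 3.2) using the finiteness of $\nu(d^2(x_k)f^2)$ to kill the remainder. The argument is correct and matches the paper's proof in all essential steps.
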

A few words about the structure of the paper. Since the proof of the main result presented in Theorem \ref{theorem} trivially follows from Proposition \ref{proposition} and Proposition  \ref{propUbound}, we concentrate on showing the validity of these two. 

For simplicity we will present the proof for the 2-dimensional lattice $\Z^2$. At first, the proof of 
  Proposition  \ref{propUbound} will be presented in section \ref{sectionUbound} where the U-bound inequality (\ref{Ubound}) is shown to hold under the conditions of the main theorem. 
The proof of Proposition \ref{proposition} will occupy the rest of the paper. In particular, in section \ref{section4} a Sweeping Out inequality will be shown as well as a spectral gap type inequality for the one site measure. In section \ref{secLS} a second Sweeping Out inequality is proven. In section \ref{proof sec6} logarithmic Sobolev type inequalities for the one site measure as well as for the infinite product measure are proven. Then in the section \ref{spectralgap} we present a spectral gap type inequality for the product measure directly from the log-Sobolev inequality shown in the previous section. Using this we show convergence to the Gibbs measure as well as it's uniqueness. Then at the final part of the section, in subsection \ref{finalproof},  we put all the previous bits together to prove Proposition \ref{proposition}.    \section{\label{sectionUbound}proof of the U-bound inequality}
 U-bound inequalities where introduced in \cite{H-Z} in order to prove $q$ logarithmic Sobolev inequalities. In this work we  use U-bound inequalities in order to control the quadratic interactions.  In this section we prove Proposition \ref{propUbound}, that states that if the measure $\mu(dx)=\frac{e^{-\phi(x)}}{\int e^{-\phi(x)} dx}$ satisfies the log-Sobolev inequality and the local specification has quadratic interactions then the U-bound inequality (\ref{Ubound})  is satisfied.

\begin{lem}\label{lemU1}If   $\mu $ satisfies the log-Sobolev inequality and    the  local specification has quadratic interactions $V$ as in (\ref{quadratic}), then for any $i\in  \mathbb{Z}^2$  \begin{align}\label{induction} \nu ( d^2(x_i)f^2)\leq K_{0}\nu( f^{2})+K_{0}\nu(\|\nabla_{i} f\|^2)+K_{1}&J^{2} \sum_{j\sim i}\nu(f^{2}d^2(\omega_j))
\end{align}for   positive constants $K_0$ and $K_{1}$.
\end{lem}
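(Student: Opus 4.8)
The plan is to first establish a single-site U-bound for the phase measure $\mu$ alone, and then transfer it to the conditional measures of $\nu$ at the site $i$, paying for the quadratic interaction along the way. The engine is a soft consequence of the log-Sobolev inequality for $\mu$. Since $\|\nabla d\|\leq k_0$, the distance $d$ is Lipschitz, so by the Herbst argument the log-Sobolev inequality for $\mu$ yields Gaussian integrability, namely $\mu(e^{\epsilon d^2})=:M<\infty$ for some $\epsilon>0$. Inserting $V=\epsilon d^2$ and $g=f^2$ into the entropy variational inequality $\mu(gV)\leq \mathrm{Ent}_\mu(g)+\mu(g)\log\mu(e^V)$, where $\mathrm{Ent}_\mu(g)=\mu(g\log g)-\mu(g)\log\mu(g)$, and bounding $\mathrm{Ent}_\mu(f^2)\leq c\,\mu(\|\nabla f\|^2)$ by the log-Sobolev inequality, I obtain the U-bound for $\mu$,
\[
\mu(d^2 f^2)\leq A\,\mu(f^2)+B\,\mu(\|\nabla f\|^2),
\]
with $A=\epsilon^{-1}\log M$ and $B=\epsilon^{-1}c$ depending only on $\mu$.

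Next I pass to the single-site conditional measure. By the DLR equation $\nu\E^{\{i\},\bullet}=\nu$ it suffices to prove the estimate for $\E^{\{i\},\omega}$ (integrating over $x_i$ only, with the remaining coordinates frozen) and then integrate against $\nu$. Writing the nonnegative interaction potential $U:=\sum_{j\sim i}J_{ij}V(x_i,\omega_j)$, the conditional measure is the tilt $d\E^{\{i\},\omega}=Z^{-1}e^{-U}\,d\mu$ in the $x_i$ variable, with $Z=\mu(e^{-U})\in(0,1]$. Setting $h=e^{-U/2}$ and applying the U-bound for $\mu$ to $fh$, then dividing by $Z$, the left side becomes $\E^{\{i\}}(d^2(x_i)f^2)$ since $\mu((fh)^2)=Z\,\E^{\{i\}}(f^2)$ and $d^2(fh)^2=d^2 f^2h^2$. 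The interaction surfaces in the gradient term: expanding $\nabla_i(fh)=h\nabla_i f+f\nabla_i h$ with $\nabla_i h=-\tfrac12 h\,\nabla_i U$ gives $\|\nabla_i(fh)\|^2\leq 2h^2\|\nabla_i f\|^2+\tfrac12 f^2h^2\|\nabla_i U\|^2$. Using that $i$ has finitely many neighbors (four in $\Z^2$), that $|J_{ij}|\leq J$, and the quadratic gradient bound $\|\nabla_i V(x_i,\omega_j)\|^2\leq k\,d^2(x_i)+k\,d^2(\omega_j)$, I bound $\|\nabla_i U\|^2$ by a constant times $J^2 d^2(x_i)$ plus a constant times $J^2\sum_{j\sim i}d^2(\omega_j)$.

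Integrating these against $\mu$ and dividing by $Z$ produces, besides the desired $\E^{\{i\}}(\|\nabla_i f\|^2)$ and the boundary term $J^2\sum_{j\sim i}\E^{\{i\}}(f^2 d^2(\omega_j))$, an extra term proportional to $J^2\,\E^{\{i\}}(d^2(x_i)f^2)$ — exactly the quantity on the left. The final step is the absorption: this extra coefficient is $O(J^2)$, hence strictly below $1$ for $J$ small, so it can be moved to the left, yielding the single-site inequality with constants $K_0,K_1$ depending only on $\mu$, $k$, and the admissible range of $J$. Integrating over $\nu$ and using $\nu\E^{\{i\}}=\nu$ together with the fact that each $d^2(\omega_j)$ is measurable with respect to the complement of $\{i\}$ (so it pulls out of $\E^{\{i\}}$) gives the claimed bound.

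The main obstacle is the first step, deriving the U-bound for $\mu$ from the log-Sobolev inequality via the Herbst integrability and the entropy variational principle; the rest is bookkeeping. The one delicate point in that bookkeeping is keeping the $J^2$ factor attached to the absorbed term, which is precisely what forces $J$ to be small already at this stage and what propagates the smallness requirement into Proposition \ref{propUbound}.
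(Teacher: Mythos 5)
Your proposal is correct and follows essentially the same route as the paper: the entropy variational principle combined with Herbst/Aida--Stroock exponential integrability of $d^2$ (using $\|\nabla d\|\le k_0$) gives the U-bound for $\mu$, the tilt by $e^{-V^i/2}$ transfers it to $\mathbb{E}^{i,\omega}$ at the cost of a $\|\nabla_i V^i\|^2$ term controlled by the quadratic hypothesis, and the $O(J^2)\,d^2(x_i)f^2$ term is absorbed for $J$ small before/after integrating against $\nu$ via DLR. The only (immaterial) difference is that you absorb at the conditional-measure level whereas the paper absorbs after taking the $\nu$-expectation.
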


\begin{proof}
If we use the following entropic    inequality
(see \cite{D-S})
\begin{equation}\label{3eq2}\forall t>0, \ \pi(uv)\leq\frac{1}{t}log\left(\pi(e^{tu})\right)+\frac{1}{t}\pi(vlogv)\end{equation}
 for any probability measure $\pi$  and $v\geq 0$, $\pi v=1$, we get
 \begin{align}\label{3eq3}\mu( d^2(x_i)f^2)\leq\frac{1}{t}\mu( f^{2} )\log\left(\mu(e^{t d^2(x_i)})\right)      +\frac{1}{t}\mu(f^2log\frac{f^2}{\mu f^{2} })
\end{align}
For the first term on the right hand side of (\ref{3eq3}) we can use Theorem 4.5 from \cite{H-Z} (see also \cite{A-S}) which states that when a measure $\mu$ satisfies the log-Sobolev inequality then for any function  $g$ such that 
$$\|\nabla g\|^2\leq ag+b$$for $a,b\in(0.\infty)$ we have $$\mu e^{tg}<\infty$$ for all $t$ sufficiently small. Since 
$$\|\nabla (d^2)\|^2\leq 4d^2 \|\nabla d\|^{2}\leq 4k^{2}_0 d^2$$
from our hypothesis on $d:\|\nabla d\|\leq k_0$, we obtain that for $t$ sufficiently small 
$$\log\left(\mu(e^{t d^2(x_i)})\right)      \leq K$$ for some $K$.
From  this and the fact that  $\mu$ satisfies the log-Sobolev inequality (\ref{LS}) with some constant $c$, (\ref{3eq3}) becomes \begin{align*}\mu( d^2(x_i)f^2)\leq\frac{K}{t}\mu( f^{2} )+\frac{c}{t}\mu(\|\nabla_{i} f\|^2  )
\end{align*}
 If we substitute $f$ by $fe^{\frac{-V^{i}}{2}}$, where  we denoted $V^i=\sum_{ j\sim i}J_{i,j}V(x_{i},\omega_{j})$, we get \begin{align}\label{3eq4}\int e^{-H^{i,\omega}} d^2(x_i)f^2dx_{i}\leq\frac{K}{t}\int e^{-H^{i,\omega}}  f^{2} dx_{i}+\frac{c}{t}\mu(\|\nabla_{i} (fe^{\frac{-V^{i}}{2}})\|^2  )
\end{align}
For the second term of the right hand side of (\ref{3eq4}) we have   \begin{align*}\mu(\|\nabla_{i} (fe^{\frac{-V^{i}}{2}})\|^2  )\leq 2\int e^{-H^{i,\omega}} \|\nabla_{i} f\|^2dx_{i}+\frac{1}{2}\int e^{-H^{i,\omega}}f^{2}\|\nabla_{i} V^{i}\|^2dx_{i}    
\end{align*}
If we substitute this on (\ref{3eq4}) and divide both parts with  $Z^{i,\omega}$   we will get \begin{align*}\mathbb{E}^{i,\omega} ( d^2(x_i)f^2)\leq\frac{K}{t}\mathbb{E}^{i,\omega} ( f^{2})+\frac{2c}{t}\mathbb{E}^{i,\omega}(\|\nabla_{i} f\|^2)+\frac{c}{2t}\mathbb{E}^{i,\omega}(f^{2}\|\nabla_{i} V^{i}\|^2)
\end{align*}
 If we take  the expectation with respect to the Gibbs measure we obtain  \begin{align*}\nu( d^2(x_i)f^2)\leq\frac{K}{t}\nu( f^{2})+\frac{2c}{t}\nu(\|\nabla_{i} f\|^2)+\frac{c}{2t}\nu(f^{2}\|\nabla_{i} V^{i}\|^2)
\end{align*}
 From our main assumption (\ref{quadratic}) about the interactions, $\|\nabla_iV(x_i,\omega_j)\|^2\leq kd^2(x_i)+k d^2(\omega_i)$, we have that  $$\|\nabla_{i} V^{i}\|^2\leq 16k^{2}J^{2}d^2(x_i)+4k^{2}J^{2}\sum_{j\sim i}d^2(\omega_j)$$
which leads to 
 \begin{align*}\nu ( d^2(x_i)f^2)\leq\frac{K}{t}\nu( f^{2})+\frac{2c}{t}\nu(\|\nabla_{i} f\|^2)+&\frac{8ck^{2}J^{2}}{t}\nu(f^{2}d^2(x_i))+\\ &\frac{2ck^{2}J^{2}}{t}\sum_{j\sim i}\nu(f^{2}d^2(\omega_j))
\end{align*}
For $J$ sufficiently  small so that $\frac{8ck^{2}J^{2}}{t}<1$ and $$\frac{2ck^{2}J^{2}}{t}+\frac{24c^{2}k^{4}J^{4}}{t}<\frac{3ck^{2}J^{2}}{t}\Rightarrow  \frac{\frac{2ck^{2}J^{2}}{t}}{1-\frac{8ck^{2}J^{2}}{t}}<\frac{3ck^{2}J^{2}}{t}$$
we obtain
\begin{align*}\nu ( d^2(x_i)f^2)\leq\frac{K}{(1-\frac{8ck^{2}J^{2}}{t})t}\nu( f^{2})+\frac{2c}{(1-\frac{8ck^{2}J^{2}}{t})t}\nu(\|\nabla_{i} f\|^2)+\frac{3ck^{2}J^{2}}{t} \sum_{j\sim i}\nu(f^{2}d^2(\omega_j))
\end{align*}and the lemma follows for appropriate constants $K_0$ and $K_1$.
\end{proof}
In the next lemma we show a technical calculation of an iteration that will be used.\begin{lem}\label{lemU2}If for any $i\in  \mathbb{Z}^2$ 
\begin{align}\label{3eq5}  P(i)\leq R(i)+bs^{2}\sum_{j\sim i}P(j)
\end{align}for some    $b>0$ and  some $s\in(0,1)$ sufficiently small, and
\begin{align}\label{3eq6}\lim_{n\rightarrow \infty}(s^n\sum _{dist(j,k)=n}P(k))=0 \ \  \   \  \   \forall k \in  \mathbb{Z}^2 
\end{align}
then
  \begin{align*}  P(k)\leq\frac{1}{1-bs}\sum_{n=0}^{\infty}\left(s^n\sum_{j:dist(j,k)=n}\   R(j)   \right)
\end{align*}for any  $k\in  \mathbb{Z}^2$. 
\end{lem}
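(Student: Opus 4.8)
The plan is to unfold the recursion (\ref{3eq5}) as a Neumann-type series and to kill the tail using the decay hypothesis (\ref{3eq6}). I will use throughout that all quantities are nonnegative ($P,R\ge 0$), so that substituting an upper bound for $P(j)$ into the right-hand side of (\ref{3eq5}) preserves the inequality. Iterating (\ref{3eq5}) a total of $N+1$ times gives
\[
P(k)\ \le\ \sum_{m=0}^{N}(bs^{2})^{m}\sum_{j}N_{m}(k,j)\,R(j)\ +\ (bs^{2})^{N+1}\sum_{j}N_{N+1}(k,j)\,P(j),
\]
where $N_{m}(k,j)$ is the number of nearest-neighbour walks of length $m$ in $\Z^{2}$ from $k$ to $j$. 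Since every vertex of $\Z^{2}$ has degree $4$, one has $N_{m}(k,j)\le 4^{m}$, and $N_{m}(k,j)=0$ whenever $m<\mathrm{dist}(j,k)$. This is the engine of the whole argument: each step of the recursion costs only $bs^{2}$, but reaching distance $n$ requires at least $n$ steps, so the effective weight per site at distance $n$ is governed by the interplay of $(bs^{2})^{m}$ with the number of walks.

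Next I would control the remainder. Grouping the last sum by the distance of the endpoint and using $N_{N+1}(k,j)\le 4^{N+1}$ together with the fact that only $j$ with $\mathrm{dist}(j,k)\le N+1$ contribute, I invoke (\ref{3eq6}): since $s^{n}\sum_{\mathrm{dist}(j,k)=n}P(j)\to 0$ this sequence is bounded by some $M$, whence $\sum_{\mathrm{dist}(j,k)=n}P(j)\le M s^{-n}$. Then
\[
(bs^{2})^{N+1}\sum_{j}N_{N+1}(k,j)\,P(j)\ \le\ (4bs^{2})^{N+1}\,M\sum_{n=0}^{N+1}s^{-n}\ \le\ \frac{M}{1-s}\,(4bs)^{N+1},
\]
which tends to $0$ as $N\to\infty$ as soon as $4bs<1$, i.e. for $s$ (hence $J$) sufficiently small.

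Letting $N\to\infty$ leaves $P(k)\le \sum_{j}R(j)\sum_{m\ge \mathrm{dist}(j,k)}(bs^{2})^{m}N_{m}(k,j)$. For a fixed $j$ with $\mathrm{dist}(j,k)=n$, the bound $N_{m}(k,j)\le 4^{m}$ and a geometric summation give
\[
\sum_{m\ge n}(bs^{2})^{m}N_{m}(k,j)\ \le\ \sum_{m\ge n}(4bs^{2})^{m}\ =\ \frac{(4bs^{2})^{n}}{1-4bs^{2}}\ \le\ \frac{s^{n}}{1-bs},
\]
where the last step is the elementary inequality $(4bs)^{n}(1-bs)\le 1-4bs^{2}$, valid for every $n\ge 0$ once $s$ is small enough (for $n=0$ it reads $4bs^{2}\le bs$, and for $n\ge1$ it follows from $(4bs)^{n}\le 4bs$ and $4bs(1+s)\le 1$). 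Grouping the outer sum over $j$ by the value $n=\mathrm{dist}(j,k)$ then yields precisely
\[
P(k)\ \le\ \frac{1}{1-bs}\sum_{n=0}^{\infty}\Big(s^{n}\sum_{j:\mathrm{dist}(j,k)=n}R(j)\Big),
\]
which is the assertion of the lemma.

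The main obstacle is the bookkeeping in the third step rather than any deep idea: a walk reaching distance $n$ costs at least $(bs^{2})^{n}$, whereas the target weight is $s^{n}$, and this gap is exactly compensated by the exponentially many (at most $4^{m}$) admissible walks. The scheme closes only because both geometric ratios $4bs^{2}$ and $4bs$ are strictly less than $1$, which is where the smallness of $s$ is genuinely used, simultaneously to sum the leading series with the sharp constant $1/(1-bs)$ and to force the remainder to vanish through (\ref{3eq6}). (Alternatively, one could verify that the right-hand side $\tfrac{1}{1-bs}\sum_{n}s^{n}\sum_{\mathrm{dist}(j,k)=n}R(j)$ is a supersolution of (\ref{3eq5}) for small $s$ and conclude by a comparison/maximum-principle argument, again using (\ref{3eq6}) to discard the decaying homogeneous solutions.)
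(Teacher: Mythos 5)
Your proof is correct, and it takes a genuinely different route from the paper's. The paper proceeds by induction on the ``shells'' $\{j:\mathrm{dist}(j,k)=n\}$: it establishes the intermediate inequality (\ref{3eq7}), which bounds the $P$-mass of shell $n$ by $R$-terms on shells $0,\dots,n$ plus $s$ times the $P$-mass of shell $n+1$, and then iterates this, discarding the tail via (\ref{3eq6}); the constant $\tfrac{1}{1-bs}$ emerges at the very end as $\sum_r (bs)^r$. You instead unfold (\ref{3eq5}) directly as a Neumann/walk-counting expansion, with the combinatorial input isolated in the two facts $N_m(k,j)\le 4^m$ and $N_m(k,j)=0$ for $m<\mathrm{dist}(j,k)$; the remainder is killed by (\ref{3eq6}) exactly as in the paper, and the constant comes from the elementary comparison $(4bs)^n(1-bs)\le 1-4bs^2$, which I checked holds for $s\le 1/4$ and $4bs(1+s)\le 1$. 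Your version is shorter, avoids the cascade of ad hoc smallness conditions in the paper's induction (e.g. $4bs^2+4b^2s^5<s$, $3b(s^2+s^3+s^4)\le 9b^2s^6+s$), and transfers verbatim to any graph of bounded degree $D$ by replacing $4$ with $D$; the paper's shell induction is more tied to the geometry of $\Z^2$. One point worth making explicit in either treatment: both arguments use $P,R\ge 0$ (you state it, the paper uses it silently when bounding walk multiplicities), which is harmless since in the application $P(k)=\nu(d^2(x_k)f^2)$ and $R(k)=K_0\nu(f^2)+K_0\nu(\|\nabla_k f\|^2)$ are nonnegative, and the interchange of the two infinite sums in your final step is justified by Tonelli for nonnegative terms.
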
\begin{proof}
We will first  show that for any  $n\in\mathbb{N}$ there exists an $s\in(0,1)$ such that  \begin{align}\label{3eq7}  \sum _{dist(j,k)=n}P(j)\leq  &\frac{1}{1-3bs^2}\sum _{dist(j,k)=n}R(j)+\sum_{t=0}^{n-1}s^{n-t}\sum _{dist(j,k)=t}R(j)+s \sum _{dist(j,k)=n+1}P(j) \end{align}We will work by induction.  

Step 1: The base step of the induction ($n=1$).
 From
 (\ref{3eq5}) we have  \begin{align*} \sum _{dist(t,k)=1}P(t) \leq &  \sum _{dist(t,k)=1}R(t)+bs^{2}\sum _{dist(t,k)=1}\sum_{i\sim t}P(i)\leq\sum _{dist(t,k)=1}R(t)+\\& +2bs^{2}\sum _{dist(t,k)=2} P(t)+4bs^{2}  P(k)  \end{align*}If we use again (\ref{3eq5}) to  bound the last term we obtain   \begin{align*} \sum _{dist(t,k)=1}P(t) \leq&\sum _{dist(t,k)=1}R(t)+2bs^{2}\sum _{dist(t,k)=2} P(t)\\& +4b^{2}s^{4}\sum _{dist(t,k)=1} P(t)+4bs^{2}  R(k)  \end{align*}For $s$ small enough so that $4b^{2}s^{4}<1$, $4bs^{2} \leq 3 $ and $4bs^{2}+4b^{2}s^{5}<s$ we have   \begin{align*} \sum _{dist(t,k)=1}P(t) \leq&\frac{1}{1-4b^{2}s^{4}}\sum _{dist(t,k)=1}R(t)+\frac{2bs^{2}}{1-4b^{2}s^{4}}\sum _{dist(t,k)=2} P(t)\\& +\frac{4bs^{2}}{1-4b^{2}s^{4}}R(k)  \\ \leq&\frac{1}{1-3bs^{2}}\sum _{dist(t,k)=1}R(t)+s\sum _{dist(t,k)=2} P(t)\\& +sR(k)\end{align*}since $4bs^{2} \leq 3\Rightarrow\frac{1}{1-4b^{2}s^{4}}\leq\frac{1}{1-3bs^{2}}$ and $4bs^{2}+4b^{2}s^{5}<s\Rightarrow\frac{4bs^2}{1-4b^{2}s^{4}}\leq s$. This proves the base step.  
 
Step 2: The induction step. We assume that (\ref{3eq7}) holds true for some $n\geq2$, and we will show that it also holds for $n+1$, that is 
\begin{align}\label{3eq8}\sum _{dist(j,k)=n+1}P(j)\leq &  \frac{1}{1-3bs^2}\sum _{dist(j,k)=n+1}R(j)+  \sum_{t=0}^{n}s^{n+1-t}\sum _{dist(j,k)=t}R(j)+\\ 
&s \sum _{dist(j,k)=n+2}P(j)\nonumber\end{align}
 To bound the left hand side of (\ref{3eq8}) we can use again (\ref{3eq5})  \begin{align*} \sum _{dist(j,k)=n+1}P(j)\leq & \sum _{dist(j,k)=n+1}R(j)+bs^{2}\sum _{dist(j,k)=n+1}\sum_{t\sim j}P(t)\leq\ \\  & \sum _{dist(j,k)=n+1}R(j)+3bs^{2}\sum _{dist(j,k)=n} P(j)+2bs^{2}\sum _{dist(j,k)=n+2} P(j)\end{align*}
If we bound $\sum _{dist(j,k)=n} P(j)$ by (\ref{3eq7}) we get 
 \begin{align*} \sum _{dist(j,k)=n+1}P(j)\leq &   \sum _{dist(j,k)=n+1}R(j)+\frac{3bs^{2}}{1-3bs^2}\sum _{dist(j,k)=n}R(j)+ \\& 3bs^{2}\sum_{t=0}^{n-1}s^{n-t}\sum _{dist(j,k)=t}R(j)+\\  & 3bs^{3}\sum _{dist(j,k)=n+1} P(j)+2bs^{2}\sum _{dist(j,k)=n+2} P(j)\end{align*}
For $s$ small enough such that $3bs^2<1$, $\frac{3bs^2}{1-3bs^3}\leq s$ and $3b(s^{2}+s^{3}+s^{4})\leq 9b^2s^6+s\Rightarrow\frac{3bs^{2}}{(1-3bs^3)(1-3bs^2)}\leq s$ we obtain 
 \begin{align*} \sum _{dist(j,k)=n+1}P(j)\leq &   \frac{1}{1-3bs^2} \sum _{dist(j,k)=n+1}R(j)+  \sum_{t=0}^{n}s^{n+1-t}\sum _{dist(j,k)=t}R(j)+\\  &  +s\sum _{dist(j,k)=n+2} P(t)\end{align*}
which finishes the proof of (\ref{3eq7}). 

We can now complete the proof of the lemma. At first we can bound the second term on the right hand side of (\ref{3eq5}) by (\ref{3eq7}). That gives \begin{align*} P(k)\leq & (1+bs^3) R(k)+\frac{bs^{2}}{1-3bs^{2} }\sum_{j:dist(j,k)=1}R(j)+bs^{3}\sum_{dist(j,k)=2}P(j)\leq \\  & (1+bs^3)R(k)+s\sum_{j:dist(j,k)=1}R(j)+bs^{3}\sum_{dist(j,k)=2}P(j)
\end{align*}for $s$ sufficiently small so that $\frac{bs^{2}}{1-3bs^{2} }\leq s$. If we use again (\ref{3eq7}) to bound the third term on the right hand we have  \begin{align*} P(k)\leq  & (1+bs^3)R(k)+s\sum_{j:dist(j,k)=1}R(j)+s^{2}\sum _{dist(j,k)=2}R(j)+\\ &b\sum_{t=0}^{1}s^{5-t}\sum _{dist(j,k)=t}R(j)+bs^{4} \sum _{dist(j,k)=3}P(j)
\end{align*} where above we used once more that $\frac{bs^{2}}{1-3bs^{2} }\leq s$. If we rearrange the terms we have  \begin{align*} P(k)\leq(1+bs^3)  & R(k)+(s+bs^{4})\sum_{j:dist(j,k)=1}R(j)+s^{2}\sum _{dist(j,k)=2}R(j)+\\ & bs^{4} \sum _{dist(j,k)=3}P(j)
\end{align*}  If we continue inductively to bound the right hand side by (\ref{3eq7}) and take under account (\ref{3eq6}), then for $s$ sufficiently small such that $sb<1$ we obtain 
  \begin{align*} P(k)\leq(\sum_{r=0}^{\infty}(bs)^r)\sum_{n=0}^{\infty}\left(s^n\sum_{j:dist(j,k)=n}\   R(j)   \right)
\end{align*}   which proves the lemma.
  \end{proof}
  We now prove the U-bound inequality of Proposition \ref{propUbound}.

  \subsection{\underline{proof of Proposition \ref{propUbound}}.}
\begin{proof}The proof of the proposition follows directly from Lemma \ref{lemU1} and Lemma \ref{lemU2}. If one considers $P(k):=\nu ( d^2(x_k)f^2)$ and $R(k):=K_{0}\nu( f^{2})+K_{0}\nu(\|\nabla_{k} f\|^2)$ then  from (\ref{induction}) of  Lemma \ref{lemU1} we see that condition (\ref{3eq5}) is satisfied for $b:=K_{1}=\frac{3ck^{2}}{t}$ and $s=J$. 

Furthermore, since by our hypothesis   $\nu ( d^2(x_i)f^2)\leq M<\infty$ for some positive $M$ uniformly on $i$ and  $\{\# j:dist(j,k)=n\}\leq4^n$, we can choose $J$ sufficiently small so that  for $s=J<\frac{1}{4}$  condition (\ref{3eq6}) of Lemma \ref{lemU2} to be also satisfied:
\begin{align*}\lim_{n\rightarrow \infty}(s^n\sum _{dist(j,k)=n}P(k))\leq M\lim_{n\rightarrow \infty}(4s)^{n}=0 \ \  \   \  \   \forall k \in  \mathbb{Z}^2
\end{align*}
Since (\ref{3eq5}) and (\ref{3eq6}) are satisfied we can apply  Lemma \ref{lemU2}. We then obtain \begin{align*}  \nu ( d^2(x_k)f^2)\leq &\frac{1}{1-K_{1}J}\sum_{n=0}^{\infty}\left(J^{n}\sum_{j:dist(j,k)=n}\   (K_{0}\nu( f^{2})+K_{0}\nu(\|\nabla_{j} f\|^2)   )\right)\leq \\  &  \frac{K_{0}}{1-K_{1}J}\left(\sum_{n=0}^{\infty}(4J)^{n}\    \nu( f^{2})+ \sum_{n=0}^{\infty} J^n\sum_{j:dist(j,k)=n}\   \nu(\|\nabla_{j} f\|^2   )\right)
\end{align*}For $J$ small enough so that $4J<1$ we get
 \begin{align*}  \nu ( d^2(x_k)f^2)\leq\   \frac{K_{0}}{(1-K_{1}J)(1-4J)}  \nu( f^{2})+\frac{K_{0}}{1-K_{1}J} \sum_{n=0}^{\infty} J^n\sum_{j:dist(j,k)=n}\   \nu(\| \nabla_{j} f\|^2   )
\end{align*}which proves the proposition.
\end{proof}
 \section{\label{section4}First Sweeping Out Inequality.}Sweeping out inequalities for the local specification were introduced in  \cite{Z1},  \cite{Z2} and \cite{G-Z} to prove logarithmic Sobolev inequalities. Here we prove a  weaker version of them for the Gibbs measure, similar to the ones used in \cite{Pa1} and \cite{Pa3}, where however, interactions higher than  quadratic  were considered.  
 \begin{lem} \label{4lem1} Assume   that the  measure $\mu $ satisfies the log-Sobolev inequality and  that the  local specification has quadratic interactions $V$ as in (\ref{quadratic}). Then, for $J$ sufficiently small,  for every $j \sim i$   \begin{align*}
  \nu\| \nabla_{j}(\mathbb{E}^{i}f)
\|^2\leq & D_{1} \nu\|\nabla_{j}f\|^{2}+ D_{1}J^{2}\nu\mathbb{E}^{i}(f-\mathbb{E}^{i}f)^{2} \ +  \\ & J D_{1}      \sum_{n=1}^{\infty} J^{n}\sum_{ dist(r,i) =n}\   \nu(\|\nabla_{r} f\|^2   )\end{align*}  for some constant $D_1\in [1,\infty)$.  \end{lem}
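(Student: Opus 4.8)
The plan is to differentiate the single-site kernel directly. Writing $V^{i}:=\sum_{l\sim i}J_{il}V(x_{i},x_{l})$ for the interaction energy attached to $i$, and noting that each vector field $X^{k}_{j}$ acts only on $x_{j}$ and hence commutes with the $x_{i}$-integration defining $\mathbb{E}^{i}$, I would first establish the componentwise identity
$$X^{k}_{j}(\mathbb{E}^{i}f)=\mathbb{E}^{i}(X^{k}_{j}f)-\mathbb{E}^{i}\big((f-\mathbb{E}^{i}f)\,X^{k}_{j}V^{i}\big).$$
Squaring, summing over $k$, and applying $(a+b)^{2}\le 2a^{2}+2b^{2}$ together with Jensen on the first piece and Cauchy--Schwarz on the second yields
$$\|\nabla_{j}(\mathbb{E}^{i}f)\|^{2}\le 2\,\mathbb{E}^{i}\|\nabla_{j}f\|^{2}+2\,\mathbb{E}^{i}\big((f-\mathbb{E}^{i}f)^{2}\big)\,\mathbb{E}^{i}\|\nabla_{j}V^{i}\|^{2}.$$
Taking $\nu$ and using $\nu\mathbb{E}^{i}=\nu$ turns the first term into $2\,\nu\|\nabla_{j}f\|^{2}$, which feeds the $D_{1}\nu\|\nabla_{j}f\|^{2}$ contribution.

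Since only the $l=j$ summand of $V^{i}$ depends on $x_{j}$, the quadratic hypothesis (\ref{quadratic}) gives $\|\nabla_{j}V^{i}\|^{2}=J_{ij}^{2}\|\nabla_{j}V(x_{i},x_{j})\|^{2}\le kJ^{2}\big(d^{2}(x_{i})+d^{2}(x_{j})\big)$. To remove the $x_{i}$-moment I would run the computation of Lemma \ref{lemU1} at fixed boundary with $f\equiv1$, which for small $J$ produces a single-site bound $\mathbb{E}^{i}d^{2}(x_{i})\le \tilde K_{0}+\tilde K_{1}J^{2}\sum_{l\sim i}d^{2}(x_{l})$ after absorbing the self-referential $\mathbb{E}^{i}d^{2}(x_{i})$ term. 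Substituting, the second term above is controlled by a finite combination of expressions $\nu\big[\mathbb{E}^{i}((f-\mathbb{E}^{i}f)^{2})\,d^{2}(x_{l})\big]$ with $l=j$ or $l\sim i$, each carrying a factor $J^{2}$ (or $J^{4}$). The decisive structural point is that both $\mathbb{E}^{i}((f-\mathbb{E}^{i}f)^{2})$ and the weights $d^{2}(x_{l})$, $l\ne i$, are independent of $x_{i}$.

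This $x_{i}$-independence lets me apply the U-bound (\ref{Ubound}), centered at each such $l$, to the conditional standard deviation $F:=\sqrt{\mathbb{E}^{i}((f-\mathbb{E}^{i}f)^{2})}$, whose square reproduces the variance and which satisfies $\nabla_{i}F\equiv0$. The constant part of the U-bound gives $C\nu(F^{2})=C\,\nu\mathbb{E}^{i}(f-\mathbb{E}^{i}f)^{2}$, i.e.\ exactly the variance contribution $D_{1}J^{2}\nu\mathbb{E}^{i}(f-\mathbb{E}^{i}f)^{2}$, while the gradient part produces $\sum_{n}J^{n}\sum_{dist(r,l)=n}\nu\|\nabla_{r}F\|^{2}$. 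Here the term $r=i$ drops out; for $r\not\sim i$ the kernel $\mathbb{E}^{i}$ is independent of $x_{r}$, so $\nabla_{r}\mathbb{E}^{i}((f-\mathbb{E}^{i}f)^{2})=2\,\mathbb{E}^{i}\big((f-\mathbb{E}^{i}f)\,\nabla_{r}f\big)$ and Cauchy--Schwarz gives $\|\nabla_{r}F\|^{2}\le\mathbb{E}^{i}\|\nabla_{r}f\|^{2}$, hence $\nu\|\nabla_{r}F\|^{2}\le\nu\|\nabla_{r}f\|^{2}$; and for the finitely many neighbors $r\sim i$ the same differentiation brings back a dependence on $\nabla_{r}V^{i}$, producing the sweeping-out quantity $\nu\|\nabla_{r}\mathbb{E}^{i}f\|^{2}$.

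This recursive reappearance of $\nabla_{r}\mathbb{E}^{i}f$ over $r\sim i$ is the step I expect to be the main obstacle. I would close it by bounding each neighbor term by the very estimate of the first paragraph and summing over the finitely many $r\sim i$: because every recursive contribution carries an extra factor $J^{2}$ from the covariance times $J^{dist(r,l)}$ from the U-bound, the total self-referential coefficient is $<1$ for $J$ small and can be absorbed, precisely the mechanism formalized in Lemma \ref{lemU2}. Finally, using $dist(r,l)\ge dist(r,i)-1$ (valid since $dist(i,l)=1$) to re-center every distance sum at $i$ collapses all far-field contributions into the single geometric series $J\sum_{n\ge1}J^{n}\sum_{dist(r,i)=n}\nu\|\nabla_{r}f\|^{2}$; collecting constants yields $D_{1}$. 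The two points where smallness of $J$ is indispensable are the absorption in the single-site moment bound and the closure of this neighbor recursion.
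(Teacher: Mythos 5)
Your overall strategy is the paper's own: differentiate the kernel $\mathbb{E}^{i}$ in the direction $\nabla_{j}$, isolate the covariance with $\nabla_{j}V$, use the quadratic hypothesis (\ref{quadratic}) to reduce everything to weighted variances $\nu(\,\cdot\,d^{2}(x_{l}))$, control those with the U-bound of Proposition \ref{propUbound}, observe that only the neighbours $r\sim i$ return self-referential terms $\nu\|\nabla_{r}(\mathbb{E}^{i}f)\|^{2}$, and close that finite recursion for $J$ small. Your two deviations are cosmetic in intent, but one of them creates a real problem.

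The gap is in the neighbour case of your U-bound application. You apply (\ref{Ubound}) to $F:=\bigl(\mathbb{E}^{i}((f-\mathbb{E}^{i}f)^{2})\bigr)^{1/2}$ and assert that for $r\sim i$ the gradient $\nabla_{r}F$ ``produces the sweeping-out quantity $\nu\|\nabla_{r}\mathbb{E}^{i}f\|^{2}$''. It does not. For $r\sim i$ the density of $\mathbb{E}^{i}$ depends on $x_{r}$ through $V(x_{i},x_{r})$, so
\begin{align*}
X^{k}_{r}F^{2}=2\,\mathbb{E}^{i}\bigl((f-\mathbb{E}^{i}f)\,X^{k}_{r}f\bigr)-J_{ir}\,\mathbb{E}^{i}\bigl((f-\mathbb{E}^{i}f)^{2};\,X^{k}_{r}V(x_{i},x_{r})\bigr).
\end{align*}
The first piece is benign (after dividing by $4F^{2}$, Cauchy--Schwarz gives at most $\mathbb{E}^{i}\|\nabla_{r}f\|^{2}$ up to a factor), but the second is a covariance of the \emph{conditional variance} with $\nabla_{r}V$, not a gradient of $\mathbb{E}^{i}f$. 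Estimating it by Cauchy--Schwarz produces fourth-moment expressions $\mathbb{E}^{i}((f-\mathbb{E}^{i}f)^{4}\cdots)$ or freshly weighted variances for which you have no tool at this stage; neither Lemma \ref{lemU2} nor the recursion of your first paragraph absorbs such a term. The repair is to not take the square root at all: for $l\neq i$ the weight $d^{2}(x_{l})$ is $x_{i}$-independent, so the DLR identity $\nu\mathbb{E}^{i}=\nu$ gives $\nu[F^{2}d^{2}(x_{l})]=\nu[(f-\mathbb{E}^{i}f)^{2}d^{2}(x_{l})]$, and you can apply the U-bound directly to $g=f-\mathbb{E}^{i}f$. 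Then the gradient terms are $\nu\|\nabla_{r}(f-\mathbb{E}^{i}f)\|^{2}\le 2\nu\|\nabla_{r}f\|^{2}+2\nu\|\nabla_{r}\mathbb{E}^{i}f\|^{2}$, which vanish for $r=i$, reduce to $\nu\|\nabla_{r}f\|^{2}$ for $dist(r,i)>1$ by Jensen, and genuinely are the sweeping-out quantities only for $r\sim i$ --- exactly the paper's steps (\ref{4eq5})--(\ref{4eq7}), after which the closure over $j\sim i$ and the re-centering $dist(r,l)\ge dist(r,i)-1$ proceed as you describe. Your other deviation --- factorizing the covariance as $\mathbb{E}^{i}(\mathrm{var})\cdot\mathbb{E}^{i}\|\nabla_{j}V^{i}\|^{2}$ and then invoking a single-site moment bound $\mathbb{E}^{i}d^{2}(x_{i})\le\tilde K_{0}+\tilde K_{1}J^{2}\sum_{l\sim i}d^{2}(x_{l})$, obtainable from the computation of Lemma \ref{lemU1} with $f\equiv 1$ --- is legitimate and only reshuffles constants relative to the paper, which instead keeps the weight inside a single expectation.
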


 \begin{proof}  Consider the (sub)gradient $\nabla_j=(X_1^j,X_2^j,...,X_N^j)$. We can then write
 \begin{align} \label{4eq1} \| \nabla_j(\mathbb{E}^i f)
\|^2=\sum_{k=1}^N( X_k^j(\mathbb{E}^i f))^{2}\end{align}
If we denote   $\rho_i= \frac{e^{-H(x_{i})}}{\int e^{-H(x_{i})}dx}$ the density of the measure $\mathbb{E}^{i}$, then for every $k=1,...,N$ we have
\begin{align}\nonumber ( X^k_j(\mathbb{E}^i f))^{2}\leq\ & \left\vert X^k_j(\int \rho_i  f dx_i)\right\vert^2\leq
  & \\ &  \label{4eq2}2\left\vert\int(X^k_jf) \rho_i dx_i\right\vert^2+ 2 \left\vert\int \int f(X^k_j\rho_i )dx_i\right\vert^2\leq  \end{align} \begin{align}\label{4eq3}2\left\vert\mathbb{E}^{i}(X^k_jf)\right\vert^{2}+ 2J^2 \left\vert\mathbb{E}^i(f; X^k_jV(x_j,x_i))\right\vert^2
\end{align}
where in~\eqref{4eq3}  we  bounded the coefficients $J_{i,j}$
by $J$ and we have denoted $\mathbb{E}^i(f;g)$ the covariance of $f$ and $g$. If we take expectations with respect to the Gibbs measure $\nu$ and use the H\"older  inequality  in both terms  of~\eqref{4eq3}  we obtain
\begin{align*}\nonumber\nu \left\vert X^k_j(\mathbb{E}^{i}f)
\right\vert^2\leq   \nonumber 2\nu (X^k_jf)^{2}+\ 2J^2\nu\mathbb{E}^{i}((f-\mathbb{E}^{i}f)^{2} (X^k_jV(x_{j},x_{i}))^{2})\end{align*}If we take the sum over all $k$ from $1$ to $N$ in the last inequality and take under account (\ref{4eq1}) we get
\begin{align*}\| \nabla_j(\mathbb{E}^i f)
\|^2\leq &2\nu\| \nabla_jf\|^{2}+\ 2J^2\nu\mathbb{E}^{i}((f-\mathbb{E}^{i}f)^{2} \|\nabla_jV(x_{j},x_{i})\|^{2}) \\   \leq & \nonumber 2\nu \|\nabla_{j}f\|^{2}+ k2J^2\nu\mathbb{E}^{i}((f-\mathbb{E}^{i}f)^{2} (d^{2}(x_{j})+d^{2}(x_{i}))\end{align*}
where above we used that  the interactions are quadratic as in  hypothesis (\ref{quadratic}).  This leads to \begin{align}\nonumber \label{4eq4}\nu \| \nabla_{j}(\mathbb{E}^{i}f)\| ^2\leq & 2\nu \|\nabla_{j}f\|^{2}+k2J^2\nu (f-\mathbb{E}^{i}f)^{2} \ +  \\ & k2J^2\nu ((f-\mathbb{E}^{i}f)^{2} d^{2}(x_{i}))+k2J^2\nu ((f-\mathbb{E}^{i}f)^{2}d^{2}(x_{j}))\end{align} In order to bound the third term on the  right  hand side of (\ref{4eq4}) we can use  Proposition \ref{propUbound} \begin{align*}  \nu ((f-\mathbb{E}^{i}f)^{2} d^{2}(x_{i}))\leq & C\nu (f-\mathbb{E}^{i}f)^{2}+C \sum_{n=0}^{\infty} J^{n}\sum_{k:dist(k,i)=n}\   \nu(\|\nabla_{k} (f-\mathbb{E}^{i}f)\|^2   )   \leq \\ & \nonumber  C\nu (f-\mathbb{E}^{i}f)^{2}+2C \sum_{n=0}^{\infty} J^{n}\sum_{k:dist(k,i)=n}\   \nu(\|\nabla_{k} f\|^2   )+\\ &2C \sum_{n=0}^{\infty} J^{n}\sum_{k:dist(k,i)=n}\   \nu(\|\nabla_{k} (\mathbb{E}^{i}f)\|^2   ) \nonumber  \end{align*}
Since $\nabla_{k} (\mathbb{E}^{i}f)=\mathbb{E}^{i}(\nabla_{k} f)$ when $dist(k,i)>1$ and $\nabla_{i} (\mathbb{E}^{i}f)=0$ the last inequality takes the form
\begin{align}\label{4eq5}  \nu ((f-\mathbb{E}^{i}f)^{2} d^{2}(x_{i}))\leq  &   C\nu (f-\mathbb{E}^{i}f)^{2}+2C \sum_{n=0}^{\infty} J^{n}\sum_{r:dist(r,i)=n}\   \nu(\|\nabla_{r} f\|^2   )+\\ &2C  J\sum_{j\sim i}\   \nu(\|\nabla_{j} (\mathbb{E}^{i}f)\|^2   ) \nonumber  \end{align}
    For the fourth term on the  right  hand side of (\ref{4eq4}) we can use  again Proposition \ref{propUbound} \begin{align}\label{4eq6}\nonumber \nu ((f-\mathbb{E}^{i}f)^{2} d^{2}(x_{j}))\leq & C\nu (f-\mathbb{E}^{i}f)^{2}+C \sum_{n=0}^{\infty} J^{n}\sum_{r:dist(r,j)=n}\nu \|  \nabla_{r}(f-\mathbb{E}^{i}f)\|^{2}\leq  \\  \nonumber & C\nu (f-\mathbb{E}^{i}f)^{2}+2C \sum_{n=0}^{\infty} J^{n}\sum_{r:dist(r,j)=n}\nu \|  \nabla_{r}f\|^{2} +\\ &2C \sum_{n=0}^{\infty} J^{n}\sum_{r:dist(r,j)=n}\nu\|  \nabla_{r}( \mathbb{E}^{i}f)\|^{2}  \end{align}
  But   $\nabla_{r} (\mathbb{E}^{i}f)=\mathbb{E}^{i}(\nabla_{r} f)$ when $dist(r,i)>1$ and $\nabla_{i} (\mathbb{E}^{i}f)=0$. Furthermore, since $j \sim i$ when $ dist(r,j)=n$, the $r$'s that neighbour $i$ will have distance from  $j$ equal to    $2$ when $r\neq j$ or $0$ when $r=j$. 
 So (\ref{4eq6}) becomes  \begin{align}\label{4eq7}\nonumber  \nu ((f-\mathbb{E}^{i}f)^{2} d^{2}(x_{j}))\leq    \nonumber & C\nu\mathbb{E}^{i}(f-\mathbb{E}^{i}f)^{2}+2C \sum_{n=0}^{\infty} J^{n}\sum_{r:dist(r,j)=n}\nu \|  \nabla_{r}f\|^{2} +\\ &2C\nu \|  \nabla_{j}( \mathbb{E}^{i}f)\|^{2}+2C   J^{2}\sum_{r\sim i:r\neq j }\nu \|  \nabla_{r}( \mathbb{E}^{i}f)\|^{2}  \end{align}
 If we combine  together  (\ref{4eq4}),  (\ref{4eq5}) and (\ref{4eq7}) we get \begin{align}\label{4eq8} \nonumber \nu \| \nabla_{j}(\mathbb{E}^{i}f)
\|^2\leq & (2+16kCJ^2)\nu\|\nabla_{j}f\|^{2}+(1+2C)2k J^2\nu (f-\mathbb{E}^{i}f)^{2} \ +  \\ &  8kCJ      \sum_{n=0}^{\infty} J^{n}\sum_{r:dist(r,i)=n}\   \nu\|\nabla_{r} f\|^2   +8kJ^2C  \sum_{r\sim i}\   \nu(\|\nabla_{r} (\mathbb{E}^{i}f)\|^2   ) \end{align}since $J^2<J$ and 
  \begin{align}\label{newJnew}      \sum_{n=0}^{\infty} J^{n+1}\sum_{r:dist(r,j)=n}\   \nu\|\nabla_{r} f\|^2 \leq       \sum_{n=0}^{\infty} J^{n}\sum_{r:dist(r,i)=n}\   \nu\|\nabla_{r} f\|^2\end{align}
 for every $i \sim j$. If we take the sum over all $j\sim i$ in both sides of the inequality we will obtain 
 \begin{align*}\nonumber  \sum_{j\sim i}\nu\| \nabla_{j}(\mathbb{E}^{i}f)\|^2\leq & (2+16kCJ^2)\sum_{j\sim i}\nu\|\nabla_{j}f\|^{2}+(1+2C)8kJ^2\nu (f-\mathbb{E}^{i}f)^{2} \ +  \\ & 32kCJ      \sum_{n=0}^{\infty} J^{n}\sum_{r:dist(r,i)=n}\   \nu\|\nabla_{r} f\|^2  +32kJ^2C  \sum_{r\sim i}\   \nu\|\nabla_{r} (\mathbb{E}^{i}f)\|^2  \end{align*}If we choose $J$ sufficiently  small  so that $$\frac{(1+ 4C)8kJ}{1-32kJ^2C  }\leq  (1+ 4C)16kJ\Leftrightarrow\ J \leq \frac{1}{32kC} $$ we get 
 \begin{align*}\nonumber  \sum_{j\sim i}\nu\| \nabla_{j}(\mathbb{E}^{i}f)
\|^2\leq & \frac{(2+16kCJ)}{1-32kCJ^2} \sum_{j\sim i}\nu\|\nabla_{j}f\|^{2}+(1+ 4C)16kJ^2 \nu (f-\mathbb{E}^{i}f)^{2} \ +  \\ & (1+ 4C)16kJ     \sum_{n=0}^{\infty} J^{n}\sum_{r:dist(r,i)=n}\   \nu\|\nabla_{r} f\|^2  \end{align*} 
  Plugging the last one  into (\ref{4eq8}) and choosing $J$ small enough so that 
  $$\frac{(2+ 16kCJ^2)C8kJ}{1-32kCJ^2  }\leq   32CkJ\Leftrightarrow\ J \leq \frac{ 1}{72kC} $$ we obtain
  \begin{align*} \nonumber \nu \| \nabla_{j}(\mathbb{E}^{i}f)
\|^2\leq & (2+16kCJ^2)\nu\|\nabla_{j}f\|^{2}+(1+4C)128k^{2}C J^2\nu\mathbb{E}^{i}(f-\mathbb{E}^{i}f)^{2} \ +  \\ & \nonumber 32CkJ^2 \sum_{r\sim i}\nu\|\nabla_{r}f\|^{2}+    (1+ 4C)136k^2CJ    \sum_{n=0}^{\infty} J^{n}\sum_{r:dist(r,i)=n}\   \nu\|\nabla_{r} f\|^2    \end{align*}which finishes the proof for appropriate chosen constant $D_1$.
  \end{proof}
  Furthermore combining together (\ref{4eq7}) and  Lemma \ref{4lem1}, we obtain  the following  corollary.
  \begin{cor}\label{4cor2}   Assume   that the  measure $\mu $ satisfies the log-Sobolev inequality and  that the  local specification has quadratic interactions $V$ as in (\ref{quadratic}). Then, for $J$ sufficiently small,   for every $j \sim i$ the following holds
\begin{align*}  \nu ((f-\mathbb{E}^{i}f)^{2} d^{2}(x_{j}))\leq &D_{2}\nu\mathbb{E}^{i}(f-\mathbb{E}^{i}f)^{2}+ D_{2} \nu \|\nabla_{j}f\|^{2} +\\ & D_{2}      \sum_{n=0}^{\infty} J^{n}\sum_{ dist(r,j) =n}\   \nu\|\nabla_{r} f\|^2    \end{align*} for some constant $D_2>0$.
  \end{cor}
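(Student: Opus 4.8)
The plan is to take the bound (\ref{4eq7}) as the starting point and to remove from its right-hand side the two terms that still involve the averaged gradients $\nabla(\mathbb{E}^{i}f)$, replacing them by quantities of the form permitted in the statement by invoking Lemma \ref{4lem1}. Recall that (\ref{4eq7}) reads
\begin{align*}
\nu ((f-\mathbb{E}^{i}f)^{2} d^{2}(x_{j}))\leq\ & C\nu\mathbb{E}^{i}(f-\mathbb{E}^{i}f)^{2}+2C \sum_{n=0}^{\infty} J^{n}\sum_{r:dist(r,j)=n}\nu \|\nabla_{r}f\|^{2} +\\ & 2C\nu \|\nabla_{j}(\mathbb{E}^{i}f)\|^{2}+2C J^{2}\sum_{r\sim i:\,r\neq j }\nu \|\nabla_{r}(\mathbb{E}^{i}f)\|^{2}.
\end{align*}
The first two terms are already in the form claimed, so the whole task reduces to controlling the last two. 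For the third term I would apply Lemma \ref{4lem1} directly to the pair $j\sim i$; for the fourth term I would apply the same lemma separately to each neighbour $r\sim i$ with $r\neq j$ (its hypothesis $r\sim i$ is exactly what Lemma \ref{4lem1} requires). In $\Z^2$ there are at most three such $r$, each entering with the small prefactor $J^{2}$, and each satisfies $dist(r,j)=2$.

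The only delicate point is the bookkeeping of the geometric sums, since Lemma \ref{4lem1} produces sums \emph{centred at $i$} whereas the corollary demands a single sum \emph{centred at $j$}. The crucial feature that makes this work is that the $i$-centred sum in Lemma \ref{4lem1} carries an extra factor $J$, which prevents any $1/J$ blow-up when the centre is shifted. Concretely, interchanging the roles of $i$ and $j$ in (\ref{newJnew}) — legitimate because $i\sim j$ is symmetric — gives
\begin{align*}
\sum_{n=0}^{\infty} J^{n+1}\sum_{r:dist(r,i)=n}\nu\|\nabla_{r} f\|^2 \ \leq\ \sum_{n=0}^{\infty} J^{n}\sum_{r:dist(r,j)=n}\nu\|\nabla_{r} f\|^2,
\end{align*}
so every $i$-centred geometric contribution above converts into the desired $j$-centred sum with a bounded constant. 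The isolated terms $\nu\|\nabla_r f\|^2$ with $r\sim i$, $r\neq j$ carry the prefactor $J^{2}=J^{dist(r,j)}$ and are therefore absorbed verbatim into the $n=2$ slice of the $j$-centred sum, while the leftover term $2CD_1\nu\|\nabla_j f\|^2$ coming from the third term is precisely of the $\nu\|\nabla_j f\|^2$ type allowed on the right-hand side of the corollary.

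Collecting the $\nu\mathbb{E}^{i}(f-\mathbb{E}^{i}f)^{2}$ contributions, the single $\nu\|\nabla_j f\|^2$ contribution, and the assembled $j$-centred geometric sum, and then choosing $J$ small enough that all the prefactors produced along the way stay bounded, yields the asserted inequality with a suitable constant $D_2$. I expect the main (purely technical) obstacle to be exactly this reorganization of the two systems of geometric sums into one, keeping careful track of the powers of $J$ so that the extra factor $J$ supplied by Lemma \ref{4lem1} exactly compensates the shift of centre from $i$ to $j$; everything else is substitution and collecting constants.
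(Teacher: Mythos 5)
Your proposal is correct and follows exactly the route the paper takes: the paper obtains Corollary \ref{4cor2} precisely by combining (\ref{4eq7}) with Lemma \ref{4lem1} applied to the remaining $\nu\|\nabla_{j}(\mathbb{E}^{i}f)\|^{2}$ and $\nu\|\nabla_{r}(\mathbb{E}^{i}f)\|^{2}$ terms, and then recentring the $i$-based geometric sums at $j$ via (\ref{newJnew}), which is exactly your bookkeeping. Your observation that the extra factor of $J$ supplied by Lemma \ref{4lem1} compensates the shift of centre is the same mechanism the paper relies on.
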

  where in the above  corollary we used again (\ref{newJnew}). The next lemma shows  the Poincar\'e inequality for the one site   measure $\mathbb{E}^{ i}$ on the ball.
 The proof follows closely on the proof of a similar Poincar\'e inequality on  the ball in  \cite{I-K-P} and the local Poincar\'e inequalities from \cite{SC} and \cite{V-SC-C}. \begin{lem}\label{PoincareBall} Define $\eta(i,\omega):= d( x_i ) +\sum_{j\sim i}d( \omega_{j})$ and $A^\omega(L):=\{x_i\in M:\eta(i,\omega)\leq L\}$. For any $L>0$ the following Poincar\'e type inequality on the ball holds$$\mathbb{E}^{i,\omega} \vert f-\frac{1}{\vert A^{\omega}(L)\vert}\int_{A^{\omega}(L)} f(z_i)dz_i\vert^2\mathcal{I}_{\{\eta(i,\omega) \leq L\}}  \nonumber\vert^2\mathcal{I}_{\{\eta(i,\omega) \leq L\}}\leqslant D_{L}\mathbb{E}^{i,\omega}\| \nabla_{i} f\|^2$$
for some positive constant $D_L$, where    $\vert A^{\omega}(L)\vert :=\int_{A^{\omega}(L)}dx_i$.\end{lem}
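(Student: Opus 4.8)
The plan is to exploit the fact that $A^\omega(L)$ is nothing but a metric ball on which the one-site density is comparable to the Lebesgue density, so that the assertion reduces to the Lebesgue Poincar\'e inequality on the ball (L-P). First I would observe that $\sum_{j\sim i}d(\omega_j)$ does not depend on the integration variable $x_i$, so
$$A^\omega(L)=\Big\{x_i\in M:\ d(x_i)\le L-\sum_{j\sim i}d(\omega_j)\Big\}=B_R,\qquad R:=L-\sum_{j\sim i}d(\omega_j),$$
is the ball of radius $R$ centred at $0$. If $R\le 0$ the set is empty and the inequality holds trivially, so I may assume $R\in(0,L]$, in which case necessarily $d(\omega_j)<L$ for every $j\sim i$. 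Writing $\bar f:=\frac{1}{|B_R|}\int_{B_R}f\,dx_i$ for the Lebesgue average, the left-hand side of the claim is $\frac{1}{Z^{i,\omega}}\int_{B_R}|f-\bar f|^2\,e^{-H^{i,\omega}}dx_i$, where $H^{i,\omega}(x_i)=\phi(x_i)+\sum_{j\sim i}J_{ij}V(x_i,\omega_j)$.

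The central step is a control of the oscillation of $H^{i,\omega}$ on $B_R$ that is \emph{uniform in} $\omega$. On $B_R$ one has $d(x_i)\le L$, and by the previous paragraph $d(\omega_j)\le L$ as well, so the quadratic bound (\ref{quadratic}) together with $|J_{ij}|\le J$ gives, using that $i$ has four neighbours in $\Z^2$,
$$0\le \sum_{j\sim i}J_{ij}V(x_i,\omega_j)\le J\sum_{j\sim i}\big(kd^2(x_i)+kd^2(\omega_j)\big)\le 8kJL^2\qquad\text{on }B_R.$$
Since the phase $\phi$ is continuous it is bounded on the fixed ball $\{d\le L\}$, say $m_\phi\le\phi\le M_\phi$ there. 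Hence, with $m_L:=m_\phi$ and $M_L:=M_\phi+8kJL^2$, both depending only on $L$ and the fixed data $k,J$,
$$e^{-M_L}\le e^{-H^{i,\omega}(x_i)}\le e^{-m_L}\qquad\text{on }B_R,\qquad M_L-m_L\le C_L.$$

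I would then sandwich the conditional measure between Lebesgue measure. Bounding $e^{-H^{i,\omega}}\le e^{-m_L}$ in the numerator, applying (L-P) to $B_R$, and finally using $e^{-H^{i,\omega}}\ge e^{-M_L}$ to return to $\mathbb{E}^{i,\omega}\|\nabla_i f\|^2$, the $\omega$-dependent normalization $Z^{i,\omega}$ cancels throughout and one gets
$$\mathbb{E}^{i,\omega}\big(|f-\bar f|^2\,\mathcal{I}_{B_R}\big)\le \frac{e^{-m_L}}{Z^{i,\omega}}\,c_R\int_{B_R}\|\nabla_i f\|^2\,dx_i\le c_R\,e^{M_L-m_L}\,\mathbb{E}^{i,\omega}\|\nabla_i f\|^2.$$
Here $c_R$ is the Lebesgue Poincar\'e constant of the ball $B_R$; since this constant is increasing in the radius (by the standard scaling $c_R\asymp R^2$), one has $\sup_{0<R\le L}c_R=:c_L<\infty$, and the lemma follows with $D_L:=c_L\,e^{C_L}$, depending only on $L$.

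The main obstacle is precisely to obtain a constant that does not deteriorate with the boundary condition $\omega$: a priori the interaction $\sum_{j\sim i}J_{ij}V(x_i,\omega_j)$ could make $H^{i,\omega}$ arbitrarily large through the $d^2(\omega_j)$ factors, ruining the comparison with Lebesgue measure. This is resolved by the geometry of the truncation itself: on $A^\omega(L)$ the constraint $d(x_i)+\sum_{j\sim i}d(\omega_j)\le L$ forces $d(\omega_j)\le L$, so the interaction, and hence the oscillation of the Hamiltonian, is bounded solely in terms of $L$. The only remaining care needed is the finiteness of $\sup_{0<R\le L}c_R$, which is where the standard availability of (L-P) for the Lebesgue measure enters.
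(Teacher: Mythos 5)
Your argument is essentially correct and the key structural insight is the same as the paper's: on $A^\omega(L)$ the constraint $d(x_i)+\sum_{j\sim i}d(\omega_j)\le L$ forces both $d(x_i)\le L$ and $d(\omega_j)\le L$, so the oscillation of $H^{i,\omega}$ on that set is bounded by a constant depending only on $L$ (the paper's $F_L$), uniformly in $\omega$; this is what lets one pass between $\mathbb{E}^{i,\omega}$ and Lebesgue measure with the normalisation $Z^{i,\omega}$ cancelling. Where you diverge is in how the Lebesgue-level Poincar\'e inequality on the ball is obtained. You \emph{assume} it, invoking (L-P), and your identification $A^\omega(L)=B_R$ with $R=L-\sum_{j\sim i}d(\omega_j)$ makes that invocation legitimate and rather clean. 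The paper instead \emph{proves} it from scratch: it writes $f(x_i)-f(x_iz_i)=\int_0^t\nabla_i f(x_i\gamma(s))\cdot\dot\gamma(s)\,ds$ along a geodesic, applies Cauchy--Schwarz, and uses the invariance of the Haar measure $dx_i$ to land on $\int_{A^\omega(3L)}\|\nabla_i f\|^2dx_i$, at the price of having to control the volume ratio $\vert A^\omega(2L)\vert/\vert A^\omega(L)\vert$. Your route buys brevity and avoids that volume-ratio step; the paper's route buys self-containedness.

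The one substantive caveat is the status of (L-P). In the paper it is introduced only as an additional hypothesis for Theorem \ref{theorem2}, whereas Lemma \ref{PoincareBall} is an ingredient in the proof of Proposition \ref{proposition} and hence of Theorem \ref{theorem}, which does not assume (L-P). So as written your proof establishes the lemma only under an extra hypothesis not granted in the context where it is used; to match the paper you would need either to prove the local Poincar\'e inequality within the general framework (which is exactly what the paper's geodesic argument does, and which is classical for H\"ormander systems on nilpotent Lie groups) or to state (L-P) as a standing assumption. Two smaller points: your monotonicity claim $\sup_{0<R\le L}c_R<\infty$ is justified by scaling in $\R^n$ and in Carnot groups but should be said to rely on that (Poincar\'e constants are not monotone under set inclusion in general); and the lower bound $m_\phi$ on $\phi$ over the ball is implicitly the paper's assumption $\phi\ge 0$, which the paper uses at the analogous step.
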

\begin{proof} Denote
 \begin{align}\nonumber  V_L:= \mathbb{E}^{i} \vert f -\frac{1}{\vert A^{\omega}(L)\vert}\int_{A^{\omega}(L)} f(z_i)dz_i\vert^2\mathcal{I}_{\{\eta(i,\omega) \leq L\}}  \nonumber \end{align}
where $\mathbb{E}^{i} $ has density $\rho_i=\frac{e^{-H^{ i,\omega}}}{\int e^{-H^{i,\omega}dx_i}dx_{i} }$. Since   $\phi( x_{i} )\geq0$ and  $J_{i,j}V( x_{i},\omega_{j})\geq 0$ we can bound $\rho_i\leq \frac{1}{Z^{i,\omega}}$. This leads to 
\begin{align}\label{4eq10}V_{L}\leq\frac{1}{Z^{i,\omega}}  \int_{A^{\omega}(L)} \left\vert f(x_{i})-\frac{1}{\vert A^{\omega}(L)\vert}\int_{A^{\omega}(L)} f( z_i)dz_i\right\vert^2 dx_{i} 
\end{align} If we use the invariance of the $dx_i$ measure we can write  
\begin{align*}V_{L}&\leq\frac{1}{Z^{i,\omega}}  \int_{A^{\omega}(L)} \vert f(x_{i})-\frac{1}{\vert A^{\omega}(L)\vert}\int_{A^{\omega}(L)} f(x_iz_{i})\mathbb{\mathcal{I}}_{A^{\omega}(L)}(x_iz_i)dz_i\vert^2 dx_{i}\\ &\nonumber\leq \frac{1}{\vert A^{\omega}(L)\vert^{2} Z^{i,\omega}} \int_{A^{\omega}(L)} \vert\int_{A^{\omega}(L)}  f(x_{i})- f(x_iz_{i})\mathbb{\mathcal{I}}_{A^{\omega}(L)}(x_iz_i)dz_i\vert^2 dx_{i} \end{align*}
If we use Holder inequality and consider $L$ sufficiently large so that $\vert A^{\omega}(L)\vert>1$   \begin{align}\label{4eq11} V_{L}&\leq\ \frac{1}{\vert A^{\omega}(L)\vert Z^{i,\omega}} \int \int \vert f(x_{i})- f(x_iz_{i})\vert^2 \mathbb{\mathcal{I}}_{A^{\omega}(L)}(x_iz_i)\mathbb{\mathcal{I}}_{A^{\omega}(L)}(x_i )dz_idx_{i}
\end{align}
Consider $\gamma:[0,t]\rightarrow M$ a geodesic from $0$ to $z_i$ such that $\vert \dot \gamma(t) \vert \leq 1$. Then for $t=d(z_i)$ we can write 
\begin{align*}\vert f(x_{i})- f(x_iz_{i})\vert^2 =&\vert\int_{0}^t \frac{d}{ds} f(x_{i}\gamma(s))ds\vert^2=\vert\int_{0}^t \nabla_{i}\ f(x_{i}\gamma(s))\cdot \dot\gamma(s)ds\vert^2 \leq \\  &t\int_{0}^t\| \nabla_{i}\ f(x_{i}\gamma(s))\|^2 ds=d(z_i)\int_{0}^t\| \nabla_{i}\ f(x_{i}\gamma(s))\|^2 ds \end{align*}
From the last inequality and (\ref{4eq11}) we get   \begin{align*} V_{L}&\leq\ \frac{1}{\vert A^{\omega}(L)\vert Z^{i,\omega}} \int \int d(z_i)\int_{0}^t\| \nabla_{i}\ f(x_{i}\gamma(s))\|^2 ds \mathbb{\mathcal{I}}_{A^{\omega}(L)}(x_iz_i)\mathbb{\mathcal{I}}_{A^{\omega}(L)}(x_i )dz_idx_{i}
\end{align*}
We observe  that for $x_i \in A^{\omega}(L)$ and $x_i z_i\in A^{\omega}(L)$ we obtain $$d(z_i)=d(x_i^{-1}x_iz_i)\leq d(x_i^{-1} )+d( x_iz_i)\leq d(x_{i})+d( x_iz_i)\leq2L\ $$
So  \begin{align*} V_{L}&\leq\ \frac{2L}{\vert A^{\omega}(L)\vert Z^{i,\omega}} \int \int \int_{0}^t\| \nabla_{i}\ f(x_{i}\gamma(s))\|^2 ds \mathbb{\mathcal{I}}_{A^{\omega}(L)}(x_iz_i)\mathbb{\mathcal{I}}_{A^{\omega}(L)}(x_i )dz_idx_{i}
\end{align*}
Similarly,    for $x_i \in A^{\omega}(L)$ and $x_i z_i\in A^{\omega}(L)$ we calculate 
$$\eta(i,\omega)( z_i)=d( z_i ) +\sum_{j\sim i}d( \omega_{j})\leq d(x_{i})+d( x_iz_i)+\sum_{j\sim i}d( \omega_{j})\leq2L$$
as well  as
\begin{align*}\eta(i,\omega)( x_i \gamma(s))=&d( x_i \gamma(s)) +\sum_{j\sim i}d( \omega_{j})\leq d( \gamma(s)) +d( x_i  )+\sum_{j\sim i}d( \omega_{j})\leq \\  &  d(z_{i}) +d( x_i  )+\sum_{j\sim i}d( \omega_{j})\leq3 L
\end{align*}So, we can write 
\begin{align*} V_{L}&\leq\ \frac{2L}{\vert A^{\omega}(L)\vert Z^{i,\omega}} \int \int \int_{0}^t\| \nabla_{i}\ f(x_{i}\gamma(s))\|^2  \mathbb{\mathcal{I}}_{A^{\omega}(3L)}( x_i \gamma(s))\mathbb{\mathcal{I}}_{A^{\omega}(2L)}(z_i )dsdz_idx_{i}
\end{align*}
Using again the invariance of the  $dx_i$ measure
\begin{align*} V_{L}&\leq  \frac{2L }{\vert A^{\omega}(L)\vert Z^{i,\omega}} \int \int \int_{0}^t\| \nabla_{i}\ f(x_{i})\|^2   \mathbb{\mathcal{I}}_{A^{\omega}(3L)}(x_{i})\mathbb{\mathcal{I}}_{A^{\omega}(2L)}(z_i )dsdx_{i}dz_i\\  &  =\frac{2L }{\vert A^{\omega}(L)\vert Z^{i,\omega}} \int \int  d(z_{i})\| \nabla_{i}\ f(x_{i})\|^2  \mathbb{\mathcal{I}}_{A^{\omega}(3L)}(x_{i})\mathbb{\mathcal{I}}_{A^{\omega}(2L)}(z_i )dx_{i}dz_i  \\ & \leq \frac{4L^{2}}{\vert A^{\omega}(L)\vert Z^{i,\omega}} \int \int   \left(\| \nabla_{i}\ f(x_{i})\|^2  \mathbb{\mathcal{I}}_{A^{\omega}(3L)}(x_{i})\right)dx_{i}\mathbb{\mathcal{I}}_{A^{\omega}(2L)}(z_i )dz_i \\
 & \leq \frac{4L^{2} \vert A^{\omega}(2L)\vert}{\vert A^{\omega}(L)\vert Z^{i,\omega}} \int \| \nabla_{i}\ f(x_{i})\|^2   \mathbb{\mathcal{I}}_{A^{\omega}(3L)}(x_{i})dx_{i} 
\end{align*}
For $x_i \in A^{\omega}(3L)$, since $d^{2}(x_i)\leq 9L^2$ and $d^{2}(\omega_j)\leq9L^2$  the Hamiltonian is bounded by  
\begin{align*}H^{i,\omega}=&\phi(x_i)+\sum_{j \sim i}J_{i,j}V(x_i,\omega_j)\leq \sup_{\{d(x)\leq 3L\}}\phi(x) +4J+4Jkd^{2}(x_{i})+Jk\sum_{j \sim i}d^{2}(\omega_j) \\  \leq  & \sup_{\{d(x)\leq 3L\}}\phi(x)+72JkL^{2}+4J:=F_{L} \ \end{align*}
So $$e^{-H^{i,\omega}}\geq e^{-F_{L} } $$
which gives the following bound 
\begin{align*} V_{L}&\leq \frac{4L^{2} }{e^{F_{L} }}    \frac{ \vert A^{\omega}(2L)\vert}{\vert A^{\omega}(L)\vert Z^{i,\omega}} \int \| \nabla_{i}\ f(x_{i})\|^2   e^{-H^{i,\omega}}dx_{i}  \\  & =\frac{4L^{2} }{e^{F_{L} }}    \frac{ \vert A^{\omega}(2L)\|}{\vert A^{\omega}(L)\vert } \mathbb{E}^{i,\omega}\| \nabla_{i}\ f(x_{i})\|^2   
\end{align*}
If we take under account that $$\frac{ \vert A^{\omega}(2L)\vert}{\vert A^{\omega}(L)\vert }\geq 1$$as well as
$$\frac{ \vert A^{\omega}(2L)\vert}{\vert A^{\omega}(L)\vert }\rightarrow1\ \ \text{as} \  \  \sum_{j\sim i}d(\omega_j)\rightarrow \infty$$
we observe that $\frac{ \vert A^{\omega}(2L)\vert}{\vert A^{\omega}(L)\vert }$ is bounded from above uniformly on $\omega$ from a constant. Thus, we finally obtain that
\begin{align*} V_{L}&\leq  D_{L} \mathbb{E}^{i,\omega}\| \nabla_{i}\ f(x_{i})\|^2   \end{align*}
for some positive  constant $D_L$.

 \end{proof}The next lemma gives a   bound for  the variance of the one site measure $\mathbb{E}^{i}$ outside $A^{\omega}(L)$.

\begin{lem}\label{PoincareNOBall}Assume   that the  measure $\mu $ satisfies the log-Sobolev inequality and  that the  local specification has quadratic interactions $V$ as in (\ref{quadratic}). Then, for $J$ sufficiently small the following bound holds \begin{align*}\nu\vert f- m\vert^2\mathbb{\mathcal{I}}_{\{\eta(i,\omega) > L\}} \nonumber \leq D_{3}\nu(\|\nabla_{i} f \|^2   )+D_{3} \sum_{n=1}^{\infty} J^{n-1}\sum_{r:dist(r,i)=n}\   \nu(\|\nabla_{r} f \|^2   )+ \end{align*}
for any   $L>5+2C$ and $\forall m \in \mathbb{R}$.\end{lem}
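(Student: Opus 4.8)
The plan is to turn the indicator of $\{\eta(i,\omega)>L\}$ into a $d^2$-weight and then remove that weight by the U-bound of Proposition \ref{propUbound}, in the same spirit as the proof of Lemma \ref{lemU1}. Throughout I treat $m$ as an arbitrary real constant, so that $\nabla_r(f-m)=\nabla_r f$ for every $r$. First I would note that on $\{\eta(i,\omega)>L\}$ one has $\eta(i,\omega)/L>1$, whence
\[ \mathcal{I}_{\{\eta(i,\omega)>L\}}\le \frac{\eta^2(i,\omega)}{L^2}\le\frac{5}{L^2}\Big(d^2(x_i)+\sum_{j\sim i}d^2(\omega_j)\Big), \]
the last inequality being Cauchy--Schwarz applied to the $1+4=5$ terms defining $\eta$ on $\Z^2$. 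Inserting this bound and regarding the boundary spins $\omega_j$ as the integration variables $x_j$ under $\nu$ reduces the claim to estimating $\nu(|f-m|^2 d^2(x_k))$ for the sites $k\in\{i\}\cup\{\sim i\}$.

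For each such $k$ I would apply the U-bound inequality (\ref{Ubound}) to $f-m$, obtaining
\[ \nu\big(|f-m|^2 d^2(x_k)\big)\le C\,\nu\big(|f-m|^2\big)+C\sum_{n=0}^{\infty}J^{n}\sum_{r:dist(r,k)=n}\nu\|\nabla_r f\|^2 . \]
For $k=i$ this is already of the required form. For a neighbour $k=j\sim i$ I would re-centre the gradient sum at $i$ by means of the shifting estimate (\ref{newJnew}): writing $\sum_n J^{n}\sum_{dist(r,j)=n}=J^{-1}\sum_n J^{n+1}\sum_{dist(r,j)=n}\le J^{-1}\sum_n J^{n}\sum_{dist(r,i)=n}$ costs exactly one factor $J^{-1}$, which converts the weights $J^n$ into the $J^{n-1}$ appearing in the statement and isolates the $n=0$ term $\nu\|\nabla_i f\|^2$. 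Summing over the four neighbours and collecting, the gradient contribution is bounded by $D_3\,\nu\|\nabla_i f\|^2+D_3\sum_{n\ge1}J^{n-1}\sum_{dist(r,i)=n}\nu\|\nabla_r f\|^2$ after absorbing the finitely many prefactors (including the $J^{-1}$ and the $5/L^2$) into a single constant $D_3$.

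The delicate point is the zeroth-order term $\nu(|f-m|^2)$ returned by the U-bound, which must not appear in the final bound. Its total prefactor is $\tfrac{5}{L^2}\cdot C\cdot(1+4)=\tfrac{25C}{L^2}$, and splitting it along the same region,
\[ \nu(|f-m|^2)=\nu\big(|f-m|^2\mathcal{I}_{\{\eta(i,\omega)>L\}}\big)+\nu\big(|f-m|^2\mathcal{I}_{\{\eta(i,\omega)\le L\}}\big), \]
the hypothesis $L>5+2C$ (which forces $L^2>25C$, hence $\tfrac{25C}{L^2}<1$) lets me absorb the part carrying $\mathcal{I}_{\{\eta(i,\omega)>L\}}$ into the left-hand side. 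What survives is the gradient sums together with the inside-the-ball remainder $\nu(|f-m|^2\mathcal{I}_{\{\eta(i,\omega)\le L\}})$, the term completing the displayed inequality, which is later matched with the ball Poincar\'e inequality of Lemma \ref{PoincareBall}. The main obstacle is exactly this absorption: one must verify that the weight $\eta^2/L^2$ produces a prefactor small enough to be beaten by the largeness of $L$, and keep track that re-centring the neighbour sums preserves the geometric $J^{n-1}$ decay rather than spoiling it.
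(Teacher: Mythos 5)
Your overall strategy --- converting the indicator into a $d^2$-weight and then removing that weight with the U-bound --- is the same as the paper's, and your re-centring of the neighbour sums via (\ref{newJnew}) to produce the $J^{n-1}$ decay is handled correctly. The genuine problem is the zeroth-order term. Because you apply the U-bound (\ref{Ubound}) to $f-m$ \emph{without} the indicator, you are left with $\tfrac{25C}{L^2}\,\nu(|f-m|^2)$, and only its $\mathcal{I}_{\{\eta>L\}}$ part can be absorbed into the left-hand side; the remainder $\nu(|f-m|^2\mathcal{I}_{\{\eta\le L\}})$ survives. That term is not in the conclusion of the lemma (the trailing ``$+$'' in the statement is a typo: Lemma \ref{teleutSpect1} invokes the bound with no such remainder), and it cannot be controlled by gradient terms for arbitrary $m$ --- take $f\equiv 0$ and $m=1$, so that the right-hand side of the claimed inequality vanishes while $\nu(\mathcal{I}_{\{\eta\le L\}})>0$. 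So as written you have proved a strictly weaker statement than the one asserted for every $m\in\mathbb{R}$.

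The paper avoids this by never letting go of the indicator: it uses the first power, $\mathcal{I}_{\{\eta>L\}}\le \eta/L$, together with $d\le 1+d^2$ (rather than your $\eta^2/L^2$ and Cauchy--Schwarz, which is a cosmetic difference), and when the U-bound is applied the zeroth-order output is kept in the form $C\,\nu(|f-m|^2\mathcal{I}_{\{\eta>L\}})$, so the entire zeroth-order contribution $\tfrac{5+5C}{L}\,\nu(|f-m|^2\mathcal{I}_{\{\eta>L\}})$ is absorbed once $L$ is large. To repair your argument, apply the U-bound to $(f-m)\mathcal{I}_{\{\eta>L\}}$ rather than to $f-m$, using $\|\nabla_r((f-m)\mathcal{I}_{\{\eta>L\}})\|\le\|\nabla_r f\|$ where the gradient is defined (the same informal treatment of the indicator that the paper itself uses). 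Your weaker version would in fact still suffice for the downstream use in Lemma \ref{teleutSpect1}, where $m$ is the ball average and the leftover inside-the-ball term is exactly what Lemma \ref{PoincareBall} controls, but it does not establish the lemma as stated.
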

\begin{proof}
 We  can write
\begin{align*}\nonumber   \mathbb{E}^{i}\left(\vert f- m\vert^2\mathbb{\mathcal{I}}_{\{\eta(i,\omega) > L\}}\right)&\leq \mathbb{E}^{i} \left(\vert f-m\vert^2\frac{\eta(i,\omega) }{L}\mathbb{\mathcal{I}}_{\{\eta(i,\omega) > L\}}\right)\\  &= \frac{1}{L}\mathbb{E}^{i}\left(\vert f-m\vert^2\left(d( x_{i} )+ \sum_{j\sim i}d(\omega_{j} )  \right)\mathbb{\mathcal{I}}_{\{\eta(i,\omega) > L\}}\right)\nonumber \\ \nonumber &\leq \frac{1}{L}\mathbb{E}^{i}\left(\vert f-m\vert^2\left(d^{2}( x_{i} )+ \sum_{j\sim i}d^{2}(\omega_{j} )+5\right)\mathbb{\mathcal{I}}_{\{\eta(i,\omega) > L\}}\right)\end{align*}
 since $d(x)=d(x)\mathbb{\mathcal{I}}_{\{d(x) \leq1\}}+d(x)\mathbb{\mathcal{I}}_{\{d(x) >1\}}\leq 1+d^2(x)$. If we take the expectation with respect to the Gibbs measure we get 
\begin{align*}\nonumber  \nu\left(\vert f- m\vert^2\mathbb{\mathcal{I}}_{\{\eta(i,\omega) > L\}}\right) \nonumber \leq &\frac{1}{L}\nu\left(\vert f-m\vert^2 d^{2}( x_{i} )\mathbb{\mathcal{I}}_{\{\eta(i,\omega) > L\}}\right)+\\ & \nonumber\frac{1}{L}\sum_{j\sim i}\nu\left(\vert f-m\vert^2 d^{2}( \omega_{j} )\mathbb{\mathcal{I}}_{\{\eta(i,\omega) > L\}}\right)+\frac{5}{L}\nu(\vert f-m\vert^2  \mathbb{\mathcal{I}}_{\{\eta(i,\omega) > L\}})\end{align*}
  We can bound  the first and second term  on the right hand side  from Proposition \ref{propUbound}
to get  
\begin{align*}\nonumber  \nu\vert f- m\vert^2\mathbb{\mathcal{I}}_{\{\eta(i,\omega) > L\}} \nonumber \leq & \frac{C}{L} \sum_{n=0}^{\infty} J^{n}\sum_{r:dist(r,i)=n}\   \nu(\|\nabla_{r} f \|^2   )+\\ &  \frac{C}{L}\sum_{j\sim i} \sum_{n=0}^{\infty} J^{n}\sum_{r:dist(r,j)=n}\   \nu(\|\nabla_{r} f \|^2   )+\\  & \nonumber \frac{5+5C}{L}\nu(\vert f-m\vert^2  \mathbb{\mathcal{I}}_{\{\eta(i,\omega) > L\}})\end{align*}
  Which leads to \begin{align*}  \nu\vert f- m\vert^2\mathbb{\mathcal{I}}_{\{\eta(i,\omega) > L\}} \nonumber \leq & \frac{5C}{L}  \   \nu(\|\nabla_{i} f \|^2   )+\\ &  \frac{5C}{L} \sum_{n=1}^{\infty} J^{n-1}\sum_{r:dist(r,i)=n}\   \nu(\|\nabla_{r} f \|^2   )+\\  & \nonumber \frac{5+5C}{L}\nu(\vert f-m\vert^2  \mathbb{\mathcal{I}}_{\{\eta(i,\omega) > L\}})
  \end{align*}   If we choose $L$ sufficiently large so that $\frac{5+5C}{L}<1$ we finally obtain\begin{align*}  \nu\vert f- m\vert^2\mathbb{\mathcal{I}}_{\{\eta(i,\omega) > L\}} \nonumber \leq    D_{3}\nu(\|\nabla_{i} f \|^2   )+D_{3} \sum_{n=1}^{\infty} J^{n-1}\sum_{r:dist(r,i)=n}\   \nu(\|\nabla_{r} f \|^2   )
  \end{align*} 
  for some constant $D_3>0$.   
\end{proof}We can now prove the Spectral Gap type inequality
type inequality for the expectation with respect to the Gibbs measure of the one site variance  $\nu\mathbb{E}^{i}\vert f-\mathbb{E}^{i}f\vert^2$
\begin{lem}\label{teleutSpect1}  Assume   that the  measure $\mu $ satisfies the log-Sobolev inequality and  that the  local specification has quadratic interactions $V$ as in (\ref{quadratic}). Then, for $J$ sufficiently small, the following spectral gap type inequality holds   $$\nu\mathbb{E}^{i}\vert f-\mathbb{E}^{i}f\vert^2\leq D_{4}\nu(\|\nabla_{i} f \|^2   )+D_{4} \sum_{n=1}^{\infty} J^{n-1}\sum_{r:dist(r,i)=n}\   \nu(\|\nabla_{r} f \|^2  $$for some constant $D_4\geq 1$. \end{lem}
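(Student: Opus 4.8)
The plan is to control the expected one–site variance $\nu\mathbb{E}^{i}\vert f-\mathbb{E}^{i}f\vert^2$ by combining the variational characterization of the variance with the two regional estimates already at our disposal, namely the Poincar\'e inequality on the ball $A^{\omega}(L)$ (Lemma \ref{PoincareBall}) and the bound outside the ball (Lemma \ref{PoincareNOBall}). For every fixed boundary configuration $\omega$ one has $\mathbb{E}^{i,\omega}\vert f-\mathbb{E}^{i,\omega}f\vert^2=\inf_{m\in\R}\mathbb{E}^{i,\omega}\vert f-m\vert^2$, so it suffices to produce one convenient centering value $m=m(\omega)$ and to estimate $\mathbb{E}^{i,\omega}\vert f-m\vert^2$. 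Taking expectation with respect to the Gibbs measure and using the Markov (DLR) property $\nu\mathbb{E}^{i}=\nu$, the task reduces to bounding $\nu\vert f-m\vert^2$.

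First I would take $m=m(\omega):=\frac{1}{\vert A^{\omega}(L)\vert}\int_{A^{\omega}(L)}f(z_i)\,dz_i$, the average of $f$ over the ball $A^{\omega}(L)$, which is precisely the center appearing in Lemma \ref{PoincareBall}, and split according to the ball,
\begin{align*}
\nu\vert f-m\vert^2=\nu\big(\vert f-m\vert^2\mathcal{I}_{\{\eta(i,\omega)\leq L\}}\big)+\nu\big(\vert f-m\vert^2\mathcal{I}_{\{\eta(i,\omega)>L\}}\big).
\end{align*}
On the first region, since $m$ is independent of $x_i$, I can read the integrand as a one–site expectation and apply Lemma \ref{PoincareBall} pointwise in $\omega$, which together with the Markov property gives $\nu(\vert f-m\vert^2\mathcal{I}_{\{\eta\leq L\}})=\nu\mathbb{E}^{i}(\vert f-m\vert^2\mathcal{I}_{\{\eta\leq L\}})\leq D_L\,\nu\mathbb{E}^{i}\|\nabla_i f\|^2=D_L\,\nu\|\nabla_i f\|^2$. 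On the second region I would invoke Lemma \ref{PoincareNOBall} with this same $m$, obtaining exactly $D_3\nu\|\nabla_i f\|^2+D_3\sum_{n=1}^{\infty}J^{n-1}\sum_{r:dist(r,i)=n}\nu\|\nabla_r f\|^2$. Adding the two regional contributions and setting $D_4:=D_L+D_3$ (after fixing first $L$ large enough for Lemma \ref{PoincareNOBall} and then $J$ small enough for all earlier estimates) yields the asserted inequality, the geometric weight $J^{n-1}$ being inherited verbatim from Lemma \ref{PoincareNOBall}.

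The main obstacle is the mismatch between the centering constants demanded by the two regional lemmas: Lemma \ref{PoincareBall} is anchored at the ball–average $m(\omega)$, which genuinely depends on the boundary spins $\omega_j$, $j\sim i$, whereas Lemma \ref{PoincareNOBall} is phrased for a fixed real number $m$ and its $\nu$–integral is taken with that constant. The delicate point is therefore to check that the outside–ball estimate survives when the center is allowed to depend on $\omega$: in the U–bound step underlying Lemma \ref{PoincareNOBall} the gradients $\nabla_r(f-m)$ at the neighbours $r\sim i$ acquire the extra term $\nabla_r m(\omega)$, which has to be shown harmless. I expect this to be the real content of the proof, to be handled either by re–running the outside estimate while tracking these boundary contributions (bounding $\|\nabla_r m(\omega)\|^2$ and absorbing it through Proposition \ref{propUbound}, which already controls terms of the same type) or by verifying that Lemma \ref{PoincareNOBall} applies to the present $\omega$–dependent center; once this is settled the remaining manipulations are routine bookkeeping.
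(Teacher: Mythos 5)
Your proposal is essentially the paper's own proof: the paper centers at the same ball average $m(\omega)=\frac{1}{\vert A^{\omega}(L)\vert}\int_{A^{\omega}(L)}f(z)\,dz$, bounds $\mathbb{E}^{i,\omega}\vert f-\mathbb{E}^{i,\omega}f\vert^2\le 4\,\mathbb{E}^{i,\omega}\vert f-m\vert^2$ (a harmless factor $4$ where your variational characterization gives $1$), splits on $\{\eta(i,\omega)\le L\}$ versus its complement, and applies Lemma \ref{PoincareBall} and Lemma \ref{PoincareNOBall} to the two pieces to obtain $D_4=4(D_L+D_3)$. The one point you defer --- whether Lemma \ref{PoincareNOBall}, stated for a fixed real $m$, survives the $\omega$-dependent center --- is not addressed in the paper at all: it simply substitutes $m=m(\omega)$ into that lemma, so the extra gradient contributions $\nabla_r m(\omega)$ at sites $r\neq i$ that you correctly identify (arising in the U-bound step applied to $f-m$) are never tracked; your write-up is, if anything, the more careful of the two.
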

 \begin{proof} For any $m\in \mathbb{R}$ we can bound the variance 
  \begin{align}\nonumber \label{4eq12} \mathbb{E}^{i,\omega}\vert f-\mathbb{E}^{i,\omega}f\vert^2\leq &4\mathbb{E}^{i,\omega}\vert f-m\vert^2\\   = &4\mathbb{E}^{i,\omega}\vert f-m\vert^2\mathbb{\mathcal{I}}_{\{\eta(i,\omega) \leq\ L\}}+ 4\mathbb{E}^{i,\omega}\vert f-m\vert^2\mathbb{\mathcal{I}}_{\{\eta(i,\omega) > L\}}\end{align}
where we have again denoted $$\eta(i,\omega)=d( x_{i} )+ \sum_{j\sim i}d(\omega_{j} )$$ Setting $m=\frac{1}{\vert A^{\omega}(L)\vert}\int_{A^{\omega}(L)} f(z)dz$ and   taking  the  expectation with respect to the Gibbs measure in both sides of (\ref{4eq12}) gives
  \begin{align*}\nonumber  \nu\vert f-\mathbb{E}^{i,\omega}f\vert^2\leq4 &\nu\vert f-\frac{1}{\vert A^{\omega}(L)\vert}\int_{A^{\omega}(L)} f(z)dz\vert^2\mathbb{\mathcal{I}}_{\{\eta(i,\omega) \leq\ L\}}+ \\   &4\nu\vert f-\frac{1}{\vert A^{\omega}(L)\vert}\int_{A^{\omega}(L)} f(z)dz\vert^2\mathbb{\mathcal{I}}_{\{\eta(i,\omega) > L\}}\end{align*}
We can bound  the   first and the  second term on the right hand side from  Lemma \ref{PoincareBall} and Lemma \ref{PoincareNOBall} respectively. This leads to 
  \begin{align*}\nonumber  \nu\vert f-\mathbb{E}^{i,\omega}f\vert^2\leq4(D_{L}+D_{3})\nu\| \nabla_{i} f\|^2+ 4D_{3} \sum_{n=1}^{\infty} J^{n-1}\sum_{r:dist(r,i)=n}\   \nu(\|\nabla_{r} f \|^2   )\end{align*}
 which proves the lemma for appropriate positive constant  $D_4$.
 \end{proof}
 If we combine  Lemma \ref{teleutSpect1} and Corollary \ref{4cor2}
we also have \begin{cor}\label{4cor6} Assume    that   $\mu $ satisfies the log-Sobolev inequality and  that the  local specification has quadratic interactions $V$ as in (\ref{quadratic}). Then, for $J$ sufficiently small, the following holds
\begin{align*}  \nu ((f-\mathbb{E}^{i}f)^{2} d^{2}(x_{j}))\leq & D_{5}\nu(\|\nabla_{i} f \|^2   )+D_{5} \sum_{n=1}^{\infty} J^{n-1}\sum_{r:dist(r,i)=n}\   \nu(\|\nabla_{r} f \|^2)     \end{align*} for some constant $D_5\geq 1$.
  \end{cor}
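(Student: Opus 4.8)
The plan is to feed the one-site spectral gap bound of Lemma \ref{teleutSpect1} into the variance term produced by Corollary \ref{4cor2}, and then to relabel the gradient sum centred at $j$ into one centred at $i$. Recall that Corollary \ref{4cor2} bounds $\nu((f-\E^i f)^2 d^2(x_j))$ by three contributions: the one-site variance $D_2\nu\E^i(f-\E^i f)^2$, the local Dirichlet term $D_2\nu\|\nabla_j f\|^2$, and the weighted gradient sum $D_2\sum_{n\geq 0}J^n\sum_{\mathrm{dist}(r,j)=n}\nu\|\nabla_r f\|^2$. The whole point is that each of these three pieces already has, or can be brought into, the shape of the right-hand side of the claimed inequality.

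First I would treat the variance term. Applying Lemma \ref{teleutSpect1} gives
\begin{align*}
D_2\nu\E^i(f-\E^i f)^2 \leq D_2 D_4\,\nu\|\nabla_i f\|^2 + D_2 D_4\sum_{n=1}^{\infty}J^{n-1}\sum_{r:\mathrm{dist}(r,i)=n}\nu\|\nabla_r f\|^2,
\end{align*}
which is already exactly of the desired form. For the local Dirichlet term, since $j\sim i$ we have $\mathrm{dist}(j,i)=1$, so $\nu\|\nabla_j f\|^2$ is dominated by the $n=1$ shell $\sum_{r:\mathrm{dist}(r,i)=1}\nu\|\nabla_r f\|^2$, i.e.\ precisely the $n=1$ summand (weight $J^{0}=1$) of the target sum.

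The only genuine bookkeeping lies in the third term, where I would convert the weights based at $j$ into weights based at $i$. Using the triangle inequality $|\mathrm{dist}(r,i)-\mathrm{dist}(r,j)|\leq \mathrm{dist}(i,j)=1$ together with $J<1$ — this is exactly the content of (\ref{newJnew}) — every site $r$ at distance $n$ from $j$ satisfies $J^{n}\leq J^{\,\mathrm{dist}(r,i)-1}$, so that
\begin{align*}
D_2\sum_{n=0}^{\infty}J^n\sum_{r:\mathrm{dist}(r,j)=n}\nu\|\nabla_r f\|^2 \leq \frac{D_2}{J}\,\nu\|\nabla_i f\|^2 + D_2\sum_{n=1}^{\infty}J^{n-1}\sum_{r:\mathrm{dist}(r,i)=n}\nu\|\nabla_r f\|^2,
\end{align*}
the isolated $\nu\|\nabla_i f\|^2$ coming from the term $r=i$. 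Since $J$ is a fixed (small) constant, the factor $1/J$ is harmless and is absorbed into the final constant.

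Collecting the three estimates, the $\nu\|\nabla_i f\|^2$ contributions combine into a single finite coefficient and the shell sums $\sum_{n\geq 1}J^{n-1}\sum_{\mathrm{dist}(r,i)=n}\nu\|\nabla_r f\|^2$ likewise combine, so that taking $D_5$ to be the larger of the two resulting constants (and at least $1$) yields the assertion. The main — and essentially only — obstacle is this distance relabeling in the third term: one must verify that the geometric weights $J^n$ centred at $j$ shift correctly to $J^{n-1}$ centred at $i$ and that the central $r=i$ contribution is properly accounted for. Once $J$ is fixed, everything else reduces to adding finitely many constants.
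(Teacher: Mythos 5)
Your argument is exactly the paper's: the paper proves Corollary \ref{4cor6} in one line by combining Corollary \ref{4cor2} with Lemma \ref{teleutSpect1} and the relabelling (\ref{newJnew}), which is precisely your three-step decomposition, and each step is carried out correctly. One caveat worth fixing: in the relabelling you bound the $r=i$ contribution by $J^{-1}\nu\|\nabla_i f\|^2$ via the crude inequality $J^{1}\le J^{-1}$, which makes your $D_5$ of order $1/J$; since $D_5$ later feeds (through $D_6$, $R_3$, $R_6$) into smallness conditions such as $JR_6/(1-4J)<1$ that require these constants to stay bounded as $J\to 0$, you should instead observe that $r=i$ sits in the shell $\mathrm{dist}(r,j)=1$ and therefore enters the $j$-centred sum with weight $J^{1}\le 1$, so no factor $1/J$ arises and $D_5$ can be taken independent of $J$, as the rest of the paper implicitly assumes.
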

 The following lemma provides   the sweeping out  inequality  for the one site measure
 \begin{lem} \label{4lem7} Assume   that the  measure $\mu $ satisfies the log-Sobolev inequality and  that the  local specification has quadratic interactions $V$ as in (\ref{quadratic}). Then, for $J$ sufficiently small, for every $j \sim i$ \begin{align*}\nonumber  \nu \| \nabla_{j}(\mathbb{E}^{i}f)\|^2\leq  G_{1}  \sum_{n=0}^{\infty} J^{n}\sum_{r:dist(r,j)=n}\   \nu \|\nabla_{r} f\|^2      \end{align*}   for  a constant $G_1\in [1,\infty)$.  \end{lem}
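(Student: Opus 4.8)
The plan is to derive this refined sweeping-out inequality by feeding the spectral gap estimate of Lemma \ref{teleutSpect1} into the raw bound of Lemma \ref{4lem1}, and then to re-index the resulting series from distances measured at the site $i$ to distances measured at the neighbouring site $j$. The only structural fact needed for the re-indexing is the one already used in (\ref{newJnew}): because $i\sim j$, the distance functions $dist(\cdot,i)$ and $dist(\cdot,j)$ differ by at most one, so a single power of $J$ suffices to pass between the two index sets.

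First I would take the conclusion of Lemma \ref{4lem1}, which controls $\nu\|\nabla_j(\mathbb{E}^i f)\|^2$ by the three terms $D_1\nu\|\nabla_j f\|^2$, $D_1 J^2\,\nu\mathbb{E}^i(f-\mathbb{E}^i f)^2$, and $JD_1\sum_{n\geq 1}J^n\sum_{dist(r,i)=n}\nu\|\nabla_r f\|^2$. The middle term still carries the one-site variance, so the next step is to estimate it through the spectral gap type inequality of Lemma \ref{teleutSpect1}, obtaining $D_1 J^2\,\nu\mathbb{E}^i(f-\mathbb{E}^i f)^2\leq D_1 D_4 J^2\,\nu\|\nabla_i f\|^2 + D_1 D_4 J\sum_{n\geq 1}J^n\sum_{dist(r,i)=n}\nu\|\nabla_r f\|^2$. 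After this substitution every term on the right is a Dirichlet form $\nu\|\nabla_r f\|^2$, and the two tail sums merge into $(D_1+D_1 D_4)\sum_{n\geq 1}J^{n+1}\sum_{dist(r,i)=n}\nu\|\nabla_r f\|^2$.

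It then remains to package the three surviving contributions as a single series indexed by $dist(\cdot,j)$. The term $D_1\nu\|\nabla_j f\|^2$ is exactly the $n=0$ contribution, since $dist(j,j)=0$. For $D_1 D_4 J^2\,\nu\|\nabla_i f\|^2$ I would use $J<1$ to replace $J^2$ by $J$ and note that $dist(i,j)=1$, so this piece is dominated by the $n=1$ contribution $D_1 D_4 J\sum_{dist(r,j)=1}\nu\|\nabla_r f\|^2$. For the tail I would invoke (\ref{newJnew}) with the roles of $i$ and $j$ interchanged, which is legitimate because $i\sim j$ is symmetric; this gives $\sum_{n\geq 0}J^{n+1}\sum_{dist(r,i)=n}\nu\|\nabla_r f\|^2\leq \sum_{m\geq 0}J^m\sum_{dist(r,j)=m}\nu\|\nabla_r f\|^2$. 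Summing the three pieces yields the claimed bound with, for instance, $G_1=2D_1(1+D_4)$, which lies in $[1,\infty)$ since $D_1\geq 1$.

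No smallness of $J$ beyond that already required by Lemmas \ref{4lem1} and \ref{teleutSpect1} is needed. The only point demanding care is the bookkeeping of the extra powers of $J$ generated when shifting the index set from $i$ to $j$; this is precisely what the factor $J$ in front of the tail in Lemma \ref{4lem1} and the factor $J^2$ in the variance term are there to absorb. I therefore do not expect a substantial obstacle, the argument being a mechanical reorganisation of inequalities already established.
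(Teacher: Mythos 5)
Your proposal is correct and follows essentially the same route as the paper: it feeds the spectral gap estimate of Lemma \ref{teleutSpect1} into the variance term of Lemma \ref{4lem1} and then re-indexes the resulting series from $dist(\cdot,i)$ to $dist(\cdot,j)$ using that $i\sim j$ shifts distances by one, exactly as in (\ref{newJnew}). The only difference is cosmetic, namely the value of the final constant ($2D_1(1+D_4)$ versus the paper's $D_1+4D_1D_4$), which is immaterial.
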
 
\begin{proof}Combining Lemma \ref{teleutSpect1} and Lemma  \ref{4lem1} together, for  $J$ sufficiently small, we obtain the following,  
 \begin{align*}
  \nu \| \nabla_{j}(\mathbb{E}^{i}f)\|^2\leq & J^{2}D_{1}D_{4}\nu(\|\nabla_{i} f \|^2   )+J^{2}D_{1}D_{4} \sum_{n=1}^{\infty} J^{n-1}\sum_{r:dist(r,i)=n}\   \nu(\|\nabla_{r} f \|^2 \ +  \\ &   D_{1} \nu \|\nabla_{j}f\|^{2}++J D_{1}      \sum_{n=1}^{\infty} J^{n}\sum_{ dist(r,i) =n}\   \nu(\|\nabla_{r} f\|^2   )\\  \leq & D_{1} \nu\|\nabla_{j}f\|^{2}+  2D_{4} D_{1}J\sum_{n=0}^{\infty} J^{n}\sum_{r:dist(r,i)=n}\   \nu(\|\nabla_{r} f \|^2 \  \end{align*}  because $J<1$ and $D_4\geq 1$.  Since $i\sim j$ that implies that every node $r$ which has distance $n$ from $i$, i.e. $r:dist(r,i)=n$, will have distance $n-1$ or $n+1$ from $j$. So the last inequality becomes. 
 \begin{align*}
  \nu \| \nabla_{j}(\mathbb{E}^{i}f)
\|^2\leq & D_{1} \nu \|\nabla_{j}f\|^{2}+ 4D_{4} D_{1}J\sum_{n=0}^{\infty} J^{n}\sum_{r:dist(r,i)=n}\   \nu(\|\nabla_{r} f \|^2  \end{align*}and the lemma follows for $G_{1}=D_1+4D_1D_4$. 
\end{proof}
Define the following sets
\begin{align*}
&\Gamma_0=(0,0)\cup\{j \in \mathbb{Z}^2 : dist(j,(0,0))=2m \text{\; for some \;}m\in\mathbb{N}\}, \\  
&\Gamma_1=\mathbb{Z}^2\smallsetminus\Gamma_0 .
\end{align*}
where $dist(i,j)$ refers to  the distance of  the shortest path (number of vertices) between two nodes $i$ and $j$.  Note that $dist(i,j)>1$ for all $i,j \in\Gamma_k,k=0,1$ and $\Gamma_0\cap\Gamma_1=\emptyset$.  Moreover $\mathbb{Z}^2=\Gamma_0\cup\Gamma_1$. In the next  proposition we will prove a sweeping out inequality for the product measures $\mathbb{E}^{\Gamma_0}$.    
\begin{prop}  \label{4prop8} Assume   that the  measure $\mu $ satisfies the log-Sobolev inequality and  that the  local specification has quadratic interactions $V$ as in (\ref{quadratic}). Then, for $J$ sufficiently small, the following sweeping out inequality is true
$$\nu \| \nabla_{\Gamma_1}(\mathbb{E}^{\Gamma_0}f)\|^2 \leq R_1\mathcal{\nu}\| \nabla_{\Gamma_1} f\|^2+R_2\nu \| \nabla_{\Gamma_0} f\|^2$$ 
   for constants $R_1\in [1,\infty)$  and $0<R_2\leq \frac{JG_6}{1-4J} <1$. \end{prop}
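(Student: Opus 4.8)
The plan is to exploit the checkerboard decomposition $\mathbb{Z}^2=\Gamma_0\cup\Gamma_1$. Since $\mathbb{E}^{\Gamma_0}f$ is a function of the variables $x_{\Gamma_1}$ alone, one has $\nabla_{\Gamma_0}(\mathbb{E}^{\Gamma_0}f)=0$, and therefore $\nu\|\nabla_{\Gamma_1}(\mathbb{E}^{\Gamma_0}f)\|^2=\sum_{j\in\Gamma_1}\nu\|\nabla_j(\mathbb{E}^{\Gamma_0}f)\|^2$. The decisive structural fact is that $dist(i,i')>1$ for all $i,i'\in\Gamma_0$, so that no two sites of $\Gamma_0$ interact; hence the conditional measure factorizes, $\mathbb{E}^{\Gamma_0}=\prod_{i\in\Gamma_0}\mathbb{E}^i$, and for a fixed $j\in\Gamma_1$ only the factors $\mathbb{E}^i$ with $i\sim j$ (all of which lie in $\Gamma_0$) depend on the variable $x_j$. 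Writing $N_j:=\{i\in\Gamma_0:i\sim j\}$, the remaining block $\mathbb{E}^{\Gamma_0\setminus N_j}$ is independent of $x_j$, so $\nabla_j$ commutes with it and $\nabla_j(\mathbb{E}^{\Gamma_0}f)=\mathbb{E}^{\Gamma_0\setminus N_j}\big(\nabla_j\,\mathbb{E}^{N_j}f\big)$.

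Next I would pass to a single-site estimate. Applying Jensen's inequality to the conditional product measure $\mathbb{E}^{\Gamma_0\setminus N_j}$ and then using $\nu\mathbb{E}^{\Gamma_0\setminus N_j}=\nu$, one reduces to $\nu\|\nabla_j(\mathbb{E}^{\Gamma_0}f)\|^2\le\nu\|\nabla_j(\mathbb{E}^{N_j}f)\|^2$. Since the sites of $N_j$ are mutually non-adjacent, $\mathbb{E}^{N_j}$ factorizes into a product of the four neighbouring one-site measures, and differentiating through this product yields the direct term $\mathbb{E}^{N_j}(\nabla_j f)$ together with one covariance term $\mathbb{E}^{N_j\setminus\{i\}}\mathbb{E}^i(f;\nabla_j V(x_i,x_j))$ for each $i\in N_j$. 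Each covariance is handled exactly as in Lemma \ref{4lem1}: by Cauchy--Schwarz together with the quadratic bound $\|\nabla_j V(x_i,x_j)\|^2\le k(d^2(x_i)+d^2(x_j))$, followed by Corollary \ref{4cor6} and the spectral gap estimate of Lemma \ref{teleutSpect1}. This is precisely the mechanism behind the one-site sweeping out of Lemma \ref{4lem7}, so for each $j\in\Gamma_1$ I expect a bound of the form $\nu\|\nabla_j(\mathbb{E}^{\Gamma_0}f)\|^2\le G\sum_{n=0}^{\infty}J^n\sum_{r:dist(r,j)=n}\nu\|\nabla_r f\|^2$ for a constant $G$ absorbing $G_1$ and the factor coming from the four neighbours.

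Finally I would sum over $j\in\Gamma_1$ and exchange the order of summation, writing $\sum_{j\in\Gamma_1}\sum_n J^n\sum_{dist(r,j)=n}\nu\|\nabla_r f\|^2=\sum_r\nu\|\nabla_r f\|^2\sum_{j\in\Gamma_1}J^{dist(r,j)}$, and use $\#\{j:dist(r,j)=n\}\le 4^n$ with $4J<1$ to sum the geometric series to $(1-4J)^{-1}$. Here the parity of the checkerboard does the essential work: for $r\in\Gamma_1$ the distance $dist(r,j)$ is even, including the diagonal term $r=j$ of order one, which feeds the constant $R_1\ge1$ multiplying $\nu\|\nabla_{\Gamma_1}f\|^2$; whereas for $r\in\Gamma_0$ the distance $dist(r,j)$ is always odd, hence $\ge1$, so every such contribution carries at least one factor $J$ and is collected into the term $R_2\,\nu\|\nabla_{\Gamma_0}f\|^2$ with $R_2=\tfrac{JG_6}{1-4J}$, which is $<1$ once $J$ is small.

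The step I expect to be the main obstacle is the product-measure differentiation of the middle paragraph: one must verify that each of the four covariance terms generated by the neighbouring factors $\{\mathbb{E}^i\}_{i\in N_j}$ is controlled by the one-site variance-weighted-by-$d^2$ estimates without spoiling the gain, and that the bookkeeping of the resulting lattice sum isolates the $\Gamma_0$-gradients together with an explicit factor $J$, since this is exactly what forces $R_2<1$ and makes the subsequent contraction argument for the Gibbs measure work.
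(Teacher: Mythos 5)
Your proposal is correct and follows essentially the same route as the paper: reduce $\nabla_j(\mathbb{E}^{\Gamma_0}f)$ to $\nabla_j(\mathbb{E}^{\{\sim j\}}f)$ via Jensen and the product structure of $\mathbb{E}^{\Gamma_0}$, control the resulting covariance terms with the quadratic bound on $\nabla_j V$ together with the U-bound, Corollary \ref{4cor6} and Lemma \ref{teleutSpect1} (i.e.\ the mechanism of Lemmas \ref{4lem1} and \ref{4lem7}), and then sum over $j\in\Gamma_1$ using $\#\{r:dist(r,j)=n\}\le 4^n$ and the checkerboard parity to extract the factor $J$ in front of the $\Gamma_0$-gradients. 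The paper merely organizes the middle step differently, peeling off the four neighbouring one-site expectations one at a time via repeated application of Lemma \ref{4lem7} rather than differentiating the product $\mathbb{E}^{N_j}$ in one stroke, but the estimates and the final bookkeeping are the same.
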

  \begin{proof}
We can write

 \begin{align}\label{4eq13}\nu \| \nabla_{\Gamma_1}(\mathbb{E}^{\Gamma_{0}}f)\|^2=&\sum_{i\in \Gamma_1} \nu \| \nabla_{i}(\mathbb{E}^{\Gamma_{0}}f)\|^2\leqslant\sum_{i\in \Gamma_1} \nu \| \nabla_{i}(\mathbb{E}^{\{\sim i\}}f)\|^2 \end{align}
\begin{figure}[h]
           \begin{center}
\epsfig{file=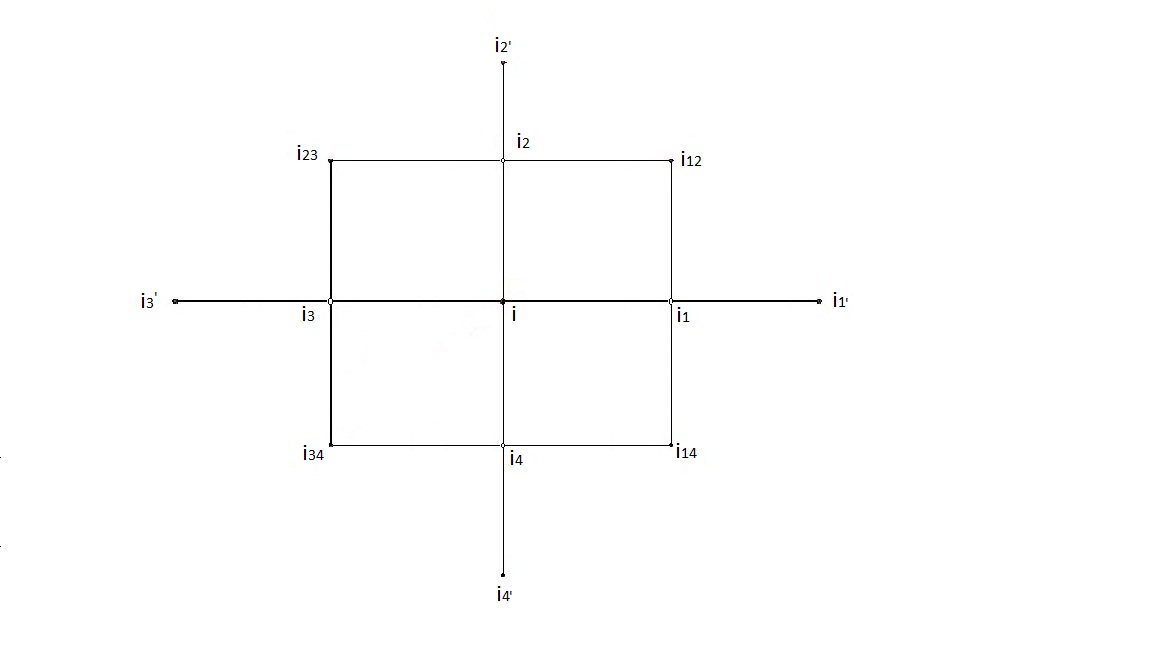, height=4.5cm}
\caption{$\circ = \Gamma_0$, $\bullet = \Gamma_1$}
\label{fig1}
           \end{center}
\end{figure}
If we denote $\{i_{1},i_{2},i_{3},i_{4}\}:=\{\sim i\}$ the neighbours  of  note $i$  as shown on Figure \ref{fig1},  and  use   Lemma \ref{4lem7}
we get the following
 \begin{align}\nonumber\label{4eq14}\nu \| \nabla_{i}(\mathbb{E}^{\{\sim i\}}f)\|^2=&\nu \| \nabla_{i}(\mathbb{E}^{ i_1}\mathbb{E}^{\{ i_{2},i_3,i_4\}}f)\|^2\leq  G_{1} \nu \|\nabla_{i}( \mathbb{E}^{\{ i_{2},i_3,i_4\}}f)\|^{2}+\\  & G_{1}  \sum_{n=1}^{\infty} J^{n}\sum_{r:dist(r,i)=n}\   \nu(\|\nabla_{r} (\mathbb{E}^{\{ i_{2},i_3,i_4\}}f)\|^2)  \end{align}
We will compute  the second term in the right hand side  of (\ref{4eq14}). For $n=1$ , we have
\begin{align}\nonumber\label{4eq15}
 \sum_{dist(r,i)=1}\nu\|\nabla_{r}( \mathbb{E}^{\{ i_{2},i_3,i_4\}}f)\|^{2}= & \nu\|\nabla_{i_{1}}( \mathbb{E}^{\{ i_{2},i_3,i_4\}}f)\|^{2}+\sum_{ r=i_{2},i_3,i_4} \nu \|\nabla_{r}( \mathbb{E}^{\{ i_{2},i_3,i_4\}}f)\|^{2}
\\ \leq  & \nu \|\nabla_{i_1}f\|^{2} \end{align} 
For $n=2$ , we distinguish between the nodes  $r$ in $\{dist(r,i)=2\}$ which neighbour only one of the neighbours $ \{ i_2,i_3,i_4\}$ of $ i$, which are the $ i'_2,i'_3,i'_4,i_{12},i_{14}$, and these which   neighbour two of the  node in $\{ i_2,i_3,i_4\}$, which are the $i_{23}$ and  $i_{34}$ neighbouring $i_2,i_3$ and $i_3,i_4$ respectively, as shown in Figure \ref{fig1}. We can then write
\begin{align}\nonumber\label{4eq16}
 \sum_{dist(r,i)=2}\nu \|\nabla_{r}( \mathbb{E}^{\{ i_{2},i_3,i_4\}}f)\|^{2}&=  \sum_{r= i_{23},i_{34}}\nu \|\nabla_{i}( \mathbb{E}^{\{ i_{2},i_3,i_4\}}f)\|^{2}+\\ &\sum_{ r=i'_2,i'_3,i'_4,i_{12},i_{14}} \nu \|\nabla_{r}( 
\mathbb{E}^{\{ i_{2},i_3,i_4\}}f)\|^{2}
  \end{align} 
To bound the second term on the right hand side of (\ref{4eq16}), for any  $r\in\{i'_2,i'_3,i'_4,i_{12},i_{14} \}$ neighbouring the node $t\in\{ i_{2},i_3,i_4\}$ we use Lemma \ref{4lem7}
 \begin{align*}\nonumber \nu\| \nabla_{r}(\mathbb{E}^{\{ i_{2},i_3,i_4\}}f)\|^2= &\nu\| \nabla_{r}\mathbb{E}^{\{t\}}(\mathbb{E}^{\{ i_{2},i_3,i_4\}\smallsetminus\{t\}}f)\|^2\leq \nu\| \nabla_{r}(\mathbb{E}^{t}f)\|^2 \leq \\ & G_{1}  \sum_{n=0}^{\infty} J^{n}\sum_{s:dist(s,r)=n}\   \nu(\|\nabla_{s} f\|^2)  \end{align*}
which leads to \begin{align*}\nonumber \sum_{ r=i'_2,i'_3,i'_4,i_{12},i_{14}}\nu\| \nabla_{r}(\mathbb{E}^{\{ i_{2},i_3,i_4\}}f)\|^2  \leq G_{1} \sum_{ dist(r,i)=2} \sum_{n=0}^{\infty} J^{n}\sum_{s:dist(s,r)=n}\   \nu(\|\nabla_{s} f\|^2)  \end{align*}
Since for nodes $r:dist(r,i)=2$, the  nodes $s$ such  that $dist(s,r)=n$ have distance from $i$ equal to $\vert n-2\vert$, $n$ or $n+2$ we get 
\begin{align}\nonumber\label{4eq17}\sum_{ r=i'_2,i'_3,i'_4,i_{12},i_{14}}\nu&\| \nabla_{r}(\mathbb{E}^{\{ i_{2},i_3,i_4\}}f)\|^2  \leq  8G_{1}J\sum_{n=0}^{1} \sum_{r:dist(s,i)=n}\   \nu\|\nabla_{s} f\|^2+\\  &   8G_{1} \sum_{n=2}^{\infty}J^{n-2}\sum_{r:dist(r,i)=n}\   \nu\|\nabla_{r} f\|^2  \end{align}
To bound the first term on the right hand side of (\ref{4eq16}), for example for   $r=i_{23}$ neighbouring the nodes $i_{2}$ and $i_3 $ we use again Lemma \ref{4lem7}
 \begin{align} \label{4eq18}\nu\|\nabla_{i_{23}}( \mathbb{E}^{\{ i_{2},i_3,i_4\}}f)\|^{2} \leq \nu\|\nabla_{i_{23}}(\mathbb{E}^{\{ i_{2},i_3\}}f)\|^{2} \leq G_{1}  \sum_{n=0}^{\infty} J^{n}\sum_{s:dist(s,i_{23})=n}\   \nu\|\nabla_{s} (\mathbb{E}^{i_3}f)\|^2  \end{align}  
The first term for $n=0$ on the sum  of (\ref{4eq18})   by Lemma  \ref{4lem7} is bounded by
 \begin{align}\label{4eq19} \nonumber \nu\|\nabla_{i_{23}}(\mathbb{E}^{i_3}f)\|^{2} \leq &G_{1}  \sum_{n=0}^{\infty} J^{n}\sum_{r:dist(s,i_{23})=n}\   \nu\|\nabla_{s} f\|^2 \leq \\ \nonumber \leq &   G_{1}J\sum_{n=0}^{1} \sum_{r:dist(s,i)=n}\   \nu\|\nabla_{s} f\|^2+\\  &   G_{1} \sum_{n=2}^{\infty}J^{n-2}\sum_{r:dist(r,i)=n}\   \nu\|\nabla_{r} f\|^2  \end{align}  
The  terms for $n=1$ on the  sum  of (\ref{4eq18}) become    
\begin{align}\label{4eq20}G_{1}  J\sum_{s:dist(s,i_{23})=1}\   \nu\|\nabla_{s} (\mathbb{E}^{i_3}f)\|^2\leq G_{1}  J\sum_{s:dist(s,i)=1,3}\   \nu\|\nabla_{s} f\|^2
\end{align} The terms for $n=2$ on the  sum  of (\ref{4eq18}) can be divided on those that neighbour $i_{3}$ and those that not 
\begin{align}\label{4eq21}\sum_{s:dist(s,i_{23})=2}\   \nu\|\nabla_{s} (\mathbb{E}^{i_3}f)\|^2=&\sum_{s:dist(s,i_{23})=2,s\sim i_{3}}\   \nu\|\nabla_{s} (\mathbb{E}^{i_3}f)\|^2+\sum_{s:dist(s,i_{23})=2,s\nsim i_{3}}\   \nu\|\nabla_{s} (\mathbb{E}^{i_3}f)\|^2 \end{align}
For the second term on the right hand side of (\ref{4eq21})
\begin{align} \label{4eq22}\sum_{s:dist(s,i_{23})=2,s\nsim i_{3}}\   \nu\|\nabla_{s} (\mathbb{E}^{i_3}f)\|^2\leq\sum_{s:dist(s,i_{23})=2,s\nsim i_{3}}\   \nu\|\nabla_{s} f\|^2  
\end{align}
While
for the first term on the right hand side of (\ref{4eq21})
we can use Lemma \ref{4lem7}
\begin{align}\label{4eq23}\nonumber \sum_{s:dist(s,i_{23})=2,s\sim i_{3}}\   \nu\|\nabla_{s} (\mathbb{E}^{i_3}f)\|^2 \leq  &  G_{1} \sum_{s:dist(s,i_{23})=2,s\sim i_{3}} \sum_{n=0}^{\infty} J^{n}\sum_{r:dist(r,s)=n}\   \nu\|\nabla_{r} f\|^2  \leq \\ \nonumber \leq &   4G_{1}\sum_{n=0}^{1} J^{n}\sum_{r:dist(s,i)=n}\   \nu\|\nabla_{s} f\|^2+\\  &   4G_{1} \sum_{n=2}^{\infty}J^{n-2}\sum_{r:dist(r,i)=n}\   \nu\|\nabla_{r} f\|^2
\end{align}
From (\ref{4eq21})-(\ref{4eq23})
we get the following bound for the terms for $n=2$ on the  sum  of (\ref{4eq18}) \begin{align}\nonumber \label{4eq24}G_{1}J^{2}\sum_{s:dist(s,i_{23})=2}\   \nu\|\nabla_{s} (\mathbb{E}^{i_3}f)\|^2\leq  &   G_{1}J^{2}\sum_{s:dist(s,i)=2 ,4}\   \nu\|\nabla_{s} f\|^2+\\  &   4G^{2}_{1} \sum_{n=0}^{\infty}J^{n}\sum_{r:dist(r,i)=n}\   \nu\|\nabla_{r} f\|^2 \end{align}
Finally,  for  the terms for $n>2$ on the sum on the right hand side of  (\ref{4eq18}), we get\begin{align}   \label{4eq25} G_{1}\sum_{n=3}^{\infty} J^{n}\sum_{r:dist(s,i_{23})=n}\   \nu\|\nabla_{s} (\mathbb{E}^{i_3}f)\|^2\leq G_{1}\sum_{n=3}^{\infty} J^{n-2}\sum_{r:dist(s,i)=n}\   \nu\|\nabla_{s} f\|^2
\end{align}
   If we put (\ref{4eq19}), (\ref{4eq20}), (\ref{4eq24}) and (\ref{4eq25}) in (\ref{4eq18})
we get \begin{align} \label{4eq26}\nonumber  \nu\|\nabla_{i_{23}}( \mathbb{E}^{\{ i_{2},i_3,i_4\}}f)\|^{2} \leq &   G_2\sum_{n=0}^{1} J^{n}\sum_{r:dist(s,i)=n}\   \nu\|\nabla_{s} f\|^2+\\   & G_{2}\sum_{n=2}^{\infty} J^{n-2}\sum_{r:dist(s,i)=n}\   \nu\|\nabla_{s} f\|^2  \end{align}
  for $G_2=4G_1^2+3G_1$. Exactly the same bound can be obtain for the other term, $\nu\|\nabla_{i_{34}}( \mathbb{E}^{\{ i_{2},i_3,i_4\}}f)\|^{2} $, on the  first  sum of the right hand side of (\ref{4eq16}).
Gathering together, (\ref{4eq16}), (\ref{4eq17}) and (\ref{4eq26})
\begin{align}\nonumber\label{4eq27}
 \sum_{dist(r,i)=2}\nu \|\nabla_{r}( \mathbb{E}^{\{ i_{2},i_3,i_4\}}f)\|^{2}=   &   G_{3}\sum_{n=0}^{1} J^{n}\sum_{r:dist(s,i)=n}\   \nu\|\nabla_{s} f\|^2+\\   & G_{3}\sum_{n=2}^{\infty} J^{n-2}\sum_{r:dist(s,i)=n}\   \nu\|\nabla_{s} f\|^2  
  \end{align} 
for $G_3=8G_1+2G_2$. 

Furthermore, for every $r:dist(r,i)\geq 3$,
\begin{align}\label{4eq28}\nu\|\nabla_{r} (\mathbb{E}^{\{ i_{2},i_3,i_4\}}f)\|^2\leq\nu\|\nabla_{r} f\|^2\end{align} Finally, if we put (\ref{4eq15}) and (\ref{4eq27}) and (\ref{4eq28}) in (\ref{4eq14}) we obtain 
 \begin{align}\label{4eq29}\nonumber\nu\| \nabla_{i}(\mathbb{E}^{\{\sim i\}}f)\|^2=&\nu\|\nabla_{i}( \mathbb{E}^{\{i_{1} ,i_{2},i_3,i_4\}}f)\|^{2}\leq G_{1} \nu\|\nabla_{i}( \mathbb{E}^{\{ i_{2},i_3,i_4\}}f)\|^{2}+\\ &   G_{4} \sum_{n=0}^{\infty}J^{n}\sum_{r:dist(r,i)=n}\   \nu\|\nabla_{r} f\|^2  \end{align} 
 for constant $G_4=G_1G_3+G_1$.

If we repeat the  same calculation  recursively for the first term on the right hand side of (\ref{4eq29}), then for   $\nu\|\nabla_{i}( \mathbb{E}^{\{i_3,i_4\}}f)\|^{2}$ and $\nu\|\nabla_{i}( \mathbb{E}^{\{ i_4\}}f)\|^{2}$ we will finally obtain  
 \begin{align*}\nonumber\nu\| \nabla_{i}(\mathbb{E}^{\{\sim i\}}f)\|^2\leq G_5 \sum_{n=0}^{\infty} J^{n}\sum_{r:dist(s,i)=n}\   \nu\|\nabla_{s} f\|^2  \end{align*}
for a constant $G_5>1$. From the last inequality and (\ref{4eq13})
we get 
 \begin{align*}\nu\| \nabla_{\Gamma_1}(\mathbb{E}^{\Gamma_0}f)
\|^2 \leq R_1\mathcal{\nu}\| \nabla_{\Gamma_1} f
\|^2+R_2\nu\| \nabla_{\Gamma_0} f
\|^2 \end{align*}
for a constant $R_2\leq G_5J(\sum_{k=0}^{\infty} (4J)^k)\leq\frac{JG_5}{1-4J}<1 $ for $J$ sufficiently small such that $J<\min\{\frac{1}{4},\frac{1}{C_5+4}\}$.
  \end{proof}
  
 \section{\label{secLS}Second Sweeping out relations.} 
In this section we prove the second sweeping out relation. We start by first proving in the next lemma the second sweeping out relation between two neighbouring nodes.\begin{lem} \label{5lem1}Assume   that the  measure $\mu $ satisfies the log-Sobolev inequality and  that the  local specification has quadratic interactions $V$ as in (\ref{quadratic}). Then, for $J$ sufficiently small, for every $i\sim j$ the following  sweeping out inequality holds   \begin{align*}\nu\| \nabla_{i}(\mathbb{E}^{j}\vert f\vert^2)^\frac{1}{2}
\|^2 \leq    R_{3}    \sum_{n=0}^{\infty} J^{n}\sum_{ dist(r,i) =n}\   \nu\|\nabla_{r} f\|^2 \end{align*}for $R_{3}\geq 1 $. 
 \end{lem}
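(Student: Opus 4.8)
The plan is to reduce the left-hand side to the covariance-type quantities already controlled in Section~\ref{section4}. Write $g:=(\mathbb{E}^{j}f^{2})^{1/2}$, so that $g$ does not depend on $x_{j}$ and $g^{2}=\mathbb{E}^{j}f^{2}$. Differentiating exactly as in the computation of Lemma~\ref{4lem1} (recording the boundary term coming from the density $\rho_{j}$ of $\mathbb{E}^{j}$, which depends on $x_{i}$ only through the single interaction $J_{ij}V(x_{i},x_{j})$ since $i\sim j$), I would obtain for each $k$
\begin{align*}
X^{k}_{i}(\mathbb{E}^{j}f^{2})=2\mathbb{E}^{j}(fX^{k}_{i}f)-J_{ij}\mathbb{E}^{j}(f^{2};X^{k}_{i}V(x_{i},x_{j})),
\end{align*}
whence $\|\nabla_{i}g\|^{2}=\tfrac{1}{4g^{2}}\sum_{k}(X^{k}_{i}\mathbb{E}^{j}f^{2})^{2}$. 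Splitting with $(a-b)^{2}\le2a^{2}+2b^{2}$ and using Cauchy--Schwarz $(\mathbb{E}^{j}(fX^{k}_{i}f))^{2}\le g^{2}\,\mathbb{E}^{j}(X^{k}_{i}f)^{2}$ on the first piece, the ``diagonal'' contribution is at most $2\mathbb{E}^{j}\|\nabla_{i}f\|^{2}$, which after integrating against $\nu$ produces the $n=0$ term $2\nu\|\nabla_{i}f\|^{2}$. Everything then rests on the covariance piece $\tfrac{J^{2}}{2g^{2}}\sum_{k}(\mathbb{E}^{j}(f^{2};X^{k}_{i}V))^{2}$.

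For the covariance piece I would first remove the dangerous dependence on $f^{2}$ itself: since a covariance is unchanged by adding constants and $f^{2}-\mathbb{E}^{j}f^{2}=(f-\mathbb{E}^{j}f)(f+\mathbb{E}^{j}f)-\mathbb{E}^{j}(f-\mathbb{E}^{j}f)^{2}$, one has $\mathbb{E}^{j}(f^{2};X^{k}_{i}V)=\mathbb{E}^{j}\big((X^{k}_{i}V-\mathbb{E}^{j}X^{k}_{i}V)(f-\mathbb{E}^{j}f)(f+\mathbb{E}^{j}f)\big)$. Applying Cauchy--Schwarz and $\mathbb{E}^{j}(f+\mathbb{E}^{j}f)^{2}\le4g^{2}$ gives $\tfrac{1}{g^{2}}(\mathbb{E}^{j}(f^{2};X^{k}_{i}V))^{2}\le4\,\mathbb{E}^{j}\big((X^{k}_{i}V-\mathbb{E}^{j}X^{k}_{i}V)^{2}(f-\mathbb{E}^{j}f)^{2}\big)$. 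Summing over $k$ and using the quadratic hypothesis \eqref{quadratic} to bound $\sum_{k}(X^{k}_{i}V-\mathbb{E}^{j}X^{k}_{i}V)^{2}\le2\|\nabla_{i}V\|^{2}+2\mathbb{E}^{j}\|\nabla_{i}V\|^{2}\le 4kd^{2}(x_{i})+2kd^{2}(x_{j})+2k\,\mathbb{E}^{j}d^{2}(x_{j})$, the covariance piece is controlled by $\nu$-expectations of $(f-\mathbb{E}^{j}f)^{2}d^{2}(x_{i})$, of $(f-\mathbb{E}^{j}f)^{2}d^{2}(x_{j})$, and of the product $\big(\mathbb{E}^{j}d^{2}(x_{j})\big)\,\mathbb{E}^{j}(f-\mathbb{E}^{j}f)^{2}$, each carrying an overall factor $J^{2}$.

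The first two are of the type already estimated: $\nu\big((f-\mathbb{E}^{j}f)^{2}d^{2}(x_{i})\big)$ is bounded by Corollary~\ref{4cor6} (with the roles of the neighbouring nodes exchanged), while for $\nu\big((f-\mathbb{E}^{j}f)^{2}d^{2}(x_{j})\big)$ I would apply Lemma~\ref{lemU1} to $h:=f-\mathbb{E}^{j}f$ at the site $j$; here $\nabla_{j}h=\nabla_{j}f$, the term $\nu h^{2}=\nu\mathbb{E}^{j}(f-\mathbb{E}^{j}f)^{2}$ is handled by the spectral gap estimate of Lemma~\ref{teleutSpect1} (so that no $\nu f^{2}$ term survives), and the remaining $\nu(h^{2}d^{2}(x_{r}))$, $r\sim j$, again fall under Corollary~\ref{4cor6}. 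The genuinely new difficulty, and the main obstacle, is the product term $\nu\big((\mathbb{E}^{j}d^{2}(x_{j}))\,\mathbb{E}^{j}(f-\mathbb{E}^{j}f)^{2}\big)$, because the weight $\mathbb{E}^{j}d^{2}(x_{j})$ is boundary dependent and is \emph{not} bounded uniformly in $\omega$. To deal with it I would extract from the computation in the proof of Lemma~\ref{lemU1}, specialised to $f\equiv1$, the pointwise self-bound $\mathbb{E}^{j}d^{2}(x_{j})\le C_{1}+C_{2}J^{2}\sum_{r\sim j}d^{2}(x_{r})$; inserting this reduces the product term to $C_{1}\nu\mathbb{E}^{j}(f-\mathbb{E}^{j}f)^{2}$ (Lemma~\ref{teleutSpect1}) plus $C_{2}J^{2}\sum_{r\sim j}\nu\big((f-\mathbb{E}^{j}f)^{2}d^{2}(x_{r})\big)$ (Corollary~\ref{4cor6}), i.e.\ again only gradient terms.

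Finally I would collect all contributions. Every bound above is a sum of $\nu\|\nabla_{r}f\|^{2}$ with distances measured from $j$, and each carries at least the factor $J^{2}$ coming from $J_{ij}^{2}\le J^{2}$. Since $i\sim j$, a node at distance $n$ from $j$ is at distance $n-1,n$ or $n+1$ from $i$, so using $J<1$ and the bookkeeping of \eqref{newJnew} these surplus powers of $J$ let me re-index all sums in terms of $\mathrm{dist}(r,i)$ and absorb the distance shift, the spare $J^{2}$ compensating the worst case $\mathrm{dist}(r,i)=n+1$. Choosing $J$ small enough and collecting constants then yields the claimed inequality with a single constant $R_{3}\ge1$.
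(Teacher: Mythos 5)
Your proposal is correct, and although its skeleton coincides with the paper's (differentiate $(\mathbb{E}^{j}f^{2})^{1/2}$, peel off the gradient piece by Cauchy--Schwarz, and reduce the covariance piece to $\nu$-expectations of $(f-\mathbb{E}^{j}f)^{2}$ weighted by $d^{2}(x_{i})$, $d^{2}(x_{j})$ and $\mathbb{E}^{j}d^{2}(x_{j})$, then invoke Corollary \ref{4cor6} and the spectral gap of Lemma \ref{teleutSpect1}), it departs from the paper at the two points where the real work is done. First, you prove the covariance estimate directly from the identity $f^{2}-\mathbb{E}^{j}f^{2}=(f-\mathbb{E}^{j}f)(f+\mathbb{E}^{j}f)-\mathbb{E}^{j}(f-\mathbb{E}^{j}f)^{2}$ together with Cauchy--Schwarz and $\mathbb{E}^{j}(f+\mathbb{E}^{j}f)^{2}\le 4\,\mathbb{E}^{j}f^{2}$; the paper instead imports this as Lemma \ref{5lem2} from \cite{Pa1}, so you have in effect reproved that lemma with $\tilde c=2$. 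Second, and more substantially, for the boundary-dependent weight $\mathbb{E}^{j}d^{2}(x_{j})$ the paper applies the U-bound \eqref{Ubound} to the function $(\mathbb{E}^{j}\vert f-\mathbb{E}^{j}f\vert^{2})^{1/2}$, which produces the self-referential terms $\sum_{r\sim j}\nu\|\nabla_{r}(\mathbb{E}^{j}\vert f-\mathbb{E}^{j}f\vert^{2})^{1/2}\|^{2}$ and forces a further substitution $f\mapsto f-\mathbb{E}^{j}f$, an appeal to Lemma \ref{4lem7}, a summation over $r\sim j$ and an absorption valid only for $J$ small; your pointwise moment bound $\mathbb{E}^{j}d^{2}(x_{j})\le C_{1}+C_{2}J^{2}\sum_{r\sim j}d^{2}(x_{r})$ does follow from the proof of Lemma \ref{lemU1} with $f\equiv 1$ (that estimate is established for each fixed boundary condition before the $\nu$-expectation is taken, and $\mathbb{E}^{j,\omega}d^{2}(x_{j})<\infty$, so the absorption is legitimate), and it collapses the product term directly onto Lemma \ref{teleutSpect1} and Corollary \ref{4cor6} with no bootstrap. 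The paper's route has the merit of using \eqref{Ubound} only as a black box; yours eliminates the most delicate absorption step of the section. Your treatment of the same-site term $\nu((f-\mathbb{E}^{j}f)^{2}d^{2}(x_{j}))$ via Lemma \ref{lemU1} applied to $h=f-\mathbb{E}^{j}f$ is also more careful than the paper's direct citation of Corollary \ref{4cor6}, which as stated carries the weight only at the neighbouring site. The final re-indexing from $\mathrm{dist}(r,j)$ to $\mathrm{dist}(r,i)$ using the spare factor $J^{2}$ and \eqref{newJnew} matches the paper.
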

 \begin{proof}  Consider the (sub)gradient $\nabla_i=(X_1^i,X_2^i,...,X_N^i)$. We can then write
\begin{align}\label{5eq1} \| \nabla_i(\mathbb{E}^j f^2)^{\frac{1}{2}}\|^2 =\sum_{k=1}^N (X_k^i(\mathbb{E}^j f^2)^{\frac{1}{2}})^2
\end{align}
Then for every $k\in\{1,...,N\}$ we can compute  
\begin{align}\label{5eq2}\vert  X^k_i(\mathbb{E}^{j}f^2)^\frac{1}{2}
\vert^2=  &\vert \frac{1}{2}(\mathbb{E}^{j}f^2)^{\frac{1}{2}-1}
X^k_i(\mathbb{E}^{j}f^2)\vert^2= \frac{1}{4}  (\mathbb{E}^{j}f^2)^{-1}
\vert X^k_i(\mathbb{E}^{j}f^2)\vert^2\end{align} 
But from relationship~\eqref{4eq2} of  Lemma~\ref{4lem1}, if we put $f^2$ in $f$, we have 
 \begin{align}\vert X^k_i(\mathbb{E}^{j}f^2)\vert^{2}\leq   \label{5eq3}2\vert\int  X^k_i(f^2)\rho_{j} dx_{j}\vert^{2}+ 2\vert\int  f^2(X^k_i\rho_{j}) dx_{j}\vert^{2}\end{align}
 where again   $\rho_{j}$ denotes  the density of
$\mathbb{E}^{j}$. For the second term in~\eqref{5eq3} we  have
\begin{equation}\label{5eq4} \vert\int  f^2(X^k_i\rho_{j}  ) dx_{j}\vert^{2}\leq J^{2}\vert\mathbb{E}^{j}(f^2; X^k_iV(x_{j},x_{i}))\vert^2\end{equation}
While for the   first term of~\eqref{5eq3} the  following   bound holds
\begin{align} \vert\int X^k_i(f^2)\rho_{j} dx_{j}\vert^{2}&=2\vert\mathbb{E}^{j}(f(X^k_if)) \vert^{2}\leq
2\left(  \mathbb{E}^{j}f^{2} \right)\left(\mathbb{E}^{j}\vert X^k_if\vert^2\right)
\label{5eq5}\end{align}
 where above we used the   Cauchy-Swartz \"   inequality.
If we plug~\eqref{5eq4} and~\eqref{5eq5} in~\eqref{5eq3} we get
\begin{align}\label{5eq6}\vert X^k_i(\mathbb{E}^{j}f^2)\vert^2&\leq 4\left(  \mathbb{E}^{j}f^{2} \right)\left(\mathbb{E}^{j}\vert X^k_i f\vert^2\right)+2J^{2}\vert\mathbb{E}^{j}(f^2; X^k_iV(x_{j},x_{i})))\vert
^2\end{align}
  Combining together (\ref{5eq2}) and (\ref{5eq6}) we obtain  
  \begin{align}\label{5eq7}\vert X^k_i(\mathbb{E}^{j}f^2)^\frac{1}{2}
\vert^2\leq  \mathbb{E}^{j}\vert X^k_i f\vert^2+J^{2}(\mathbb{E}^{j}f^2)^{-1}\vert\mathbb{E}^{j}(f^2; X^k_iV(x_{j},x_{i})))\vert
^2 \end{align}
In order to calculate the second term on the right hand side of (\ref{5eq7}) we will use the following lemma.\begin{lem}\label{5lem2}  For any probability measure $\mu$ the following inequality holds  

\begin{align*}
\mu(\vert f\vert^2;  g)
 \leq   \tilde c\left(\mu\vert f\vert^2\right)^{\frac{1}{2}}\left( \mu(\vert f-\mu f\vert^2(\vert g\vert^{2}+\mu \vert g\vert^{2})) \right)^\frac{1}{2}\end{align*} for some constant $\tilde c$ uniformly on  the  boundary conditions. \end{lem} 
Without loose of generality we can assume $\tilde c \geq 1$. The proof of  Lemma \ref{5lem2} can be found in \cite{Pa1}.  Applying this bound to the second term in (\ref{5eq7}) leads to                  \begin{align}\nonumber \left(\mathbb{E}^{j}f^2\right)^{-1}\vert\mathbb{E}^{j}(f^2;X^k_iV(x_{j}, x_{i}))\vert^2\leq \tilde c^{2} \mathbb{E}^{j}\left[\vert f-\mathbb{E}^{j}f\vert^2 \left((X^k_iV(x_{j},x_{i}))^2+\mathbb{E}^{j}(X^k_iV(x_{j},x_{i}))^2\right)\right] \end{align}
From the last inequality and (\ref{5eq7})
we have  \begin{align*}\vert X^k_i(\mathbb{E}^{j}f^2)^\frac{1}{2}
\vert^2\leq  &\mathbb{E}^{j}\vert X^k_if\vert^2+J^{2}\tilde c^{2} \mathbb{E}^{j}\left[\vert f-\mathbb{E}^{j}f\vert^2 (X^k_iV(x_{j},x_{i}))^2 \right]+\\  &J^{2}\tilde c^{2} \mathbb{E}^{j}\left[\vert f-\mathbb{E}^{j}f\vert^2 \mathbb{E}^{j}(X^k_iV(x_{j},x_{i}))^2\right] \end{align*}
Putting    this in (\ref{5eq1}) leads to
 \begin{align*}  \| \nabla_i(\mathbb{E}^j f^{2})^\frac{1}{2}
\|^2=  &\mathbb{E}^{j}\| \nabla_i f\|^2+J^{2}\tilde c^{2} \mathbb{E}^{j}\left[\vert f-\mathbb{E}^{j}f\vert^2  \|\nabla_iV(x_{j},x_{i})\|^2 \right]+\\  &J^{2}\tilde c^{2} \mathbb{E}^{j}\left[\vert f-\mathbb{E}^{j}f\vert^2 \mathbb{E}^{j} \|\nabla_iV(x_{j},x_{i})\|^2\right]\end{align*}
 If we take the expectation with respect to the Gibbs measure  and   bound $\|\nabla_iV(x_{j},x_{i})\|^2$ by (\ref{quadratic})   we get 
 \begin{align*} \nu \| \nabla_i(\mathbb{E}^j f^{2})^\frac{1}{2}
\|^2\leq&\nu \| \nabla_i f\|^2+2kJ^{2}\tilde c^{2} \nu \left[\vert f-\mathbb{E}^{j}f\vert^2  d^2(x_i)  \right]+\\ \nonumber &kJ^{2}\tilde c^{2} \nu \left[\vert f-\mathbb{E}^{j}f\vert^2  d^2(x_j) \right]+kJ^{2}\tilde c^{2} \nu \left[\vert f-\mathbb{E}^{j}f\vert^2  \mathbb{E}^{j}d^2(x_j)\right]\end{align*}
 If we bound the second  and third term on the right hand side  by Corollary \ref{4cor6} we get\begin{align}\label{5eq8}\nonumber \nu \| \nabla_i(\mathbb{E}^j f^{2})^\frac{1}{2}
\|^2\leq & \nu \| \nabla_i f\|^2+3kJ^{2}\tilde c^{2}   D_{5}\nu(\|\nabla_{j} f \|^2   )+\\ & \nonumber 3kJ^{2}\tilde c^{2} D_{5} \sum_{n=1}^{\infty} J^{n-1}\sum_{r:dist(r,j)=n}\   \nu(\|\nabla_{r} f \|^2)+\\  & kJ^{2}\tilde c^{2} \nu \left[\vert f-\mathbb{E}^{j}f\vert^2  \mathbb{E}^{j}d^2(x_j)\right]\end{align}
For the last term  on the right hand side of (\ref{5eq8}) we can write
\begin{align*}\nu \left[\vert f-\mathbb{E}^{j}f\vert^2  \mathbb{E}^{j}d^2(x_j)\right]=\nu \left[\mathbb{E}^{j}(\vert f-\mathbb{E}^{j}f\vert^2  )d^2(x_j)\right]
\end{align*}
and now apply  the U-bound inequality (\ref{Ubound}) of Proposition \ref{proposition}
\begin{align*} \nu \left[\vert f-\mathbb{E}^{j}f\vert^2  \mathbb{E}^{j}d^2(x_j)\right]\leq & C  \nu(\mathbb{E}^{j}(\vert f-\mathbb{E}^{j}f\vert^2  ))+\nonumber \\ & C \sum_{n=0}^{\infty} J^{n}\sum_{r:dist(r,j)=n}\   \nu\|\nabla_{r} (\mathbb{E}^{j}\vert f-\mathbb{E}^{j}f\vert^2  )^{\frac{1}{2}} \|^2      \end{align*} In order to bound the variance on the first term on the right  hand side we can use the spectral gap type inequality of   Lemma \ref{teleutSpect1}  
\begin{align}\label{5eq9} \nu \left[\vert f-\mathbb{E}^{j}f\vert^2  \mathbb{E}^{j}d^2(x_j)\right]\leq & CD_{4}\nu\|\nabla_{j} f \|^2   +C  D_4\sum_{n=1}^{\infty} J^{n-1}\sum_{r:dist(r,j)=n}\   \nu \|\nabla_{r} f^2\|+\nonumber \\ & C \sum_{n=0}^{\infty} J^{n}\sum_{r:dist(r,j)=n}\   \nu\|\nabla_{r} (\mathbb{E}^{j}\vert f-\mathbb{E}^{j}f\vert^2  )^{\frac{1}{2}} \|^2   \end{align} For $n=0$ the term of the second  sum is zero, while for $n>1$  the nodes  do not neighbour with $j$, so we have   
\begin{align*}   \|\nabla_{r} (\mathbb{E}^{j}(\vert f-\mathbb{E}^{j}f\vert^2  ))^{\frac{1}{2}}\|^2 =&\sum_{k=1}^N \vert X_{k}^r (\mathbb{E}^{j}(\vert f-\mathbb{E}^{j}f\vert^2  ))^{\frac{1}{2}} \vert^2=  \\ &  \frac{1}{4}\sum_{k=1}^{N} \vert (\mathbb{E}^{j}(\vert f-\mathbb{E}^{j}f\vert^2  ))^{-\frac{1}{2}} X^{r}_{k}(\mathbb{E}^{j}(\vert f-\mathbb{E}^{j}f\vert^2  ) \vert^2 \leq \\ &\sum_{k=1}^{N} \vert (\mathbb{E}^{j}(\vert f-\mathbb{E}^{j}f\vert^2  ))^{-\frac{1}{2}}\mathbb{E}^{j}\left[( f-\mathbb{E}^{j}f)\left(  \vert X^{r}_{k} f \vert+\mathbb{E}^{j}  \vert X^{r}_{k} f \vert\right) \right] \vert^2  
\end{align*}
From   Cauchy-Swartz inequality  the last becomes
 \begin{align}\label{5eq10}   \|\nabla_{r} (\mathbb{E}^{j}(\vert f-\mathbb{E}^{j}f\vert^2  ))^{\frac{1}{2}}\|^2 \leq&2\sum_{k=1}^{N}\mathbb{E}^{j}  \vert X^{r}_{k} f \vert^{2}=2\mathbb{E}^{j} \| \nabla_r f\|^{2} 
\end{align}
Putting  together (\ref{5eq9}) and (\ref{5eq10})
\begin{align}\label{5eq11}\nonumber \nu \left[\vert f-\mathbb{E}^{j}f\vert^2  \mathbb{E}^{j}d^2(x_j)\right]\leq &C   J\sum_{ r\sim j}\   \nu\|\nabla_{r} (\mathbb{E}^{j}\vert f-\mathbb{E}^{j}f\vert^2  )^{\frac{1}{2}}\|^2+CD_{4}\nu\|\nabla_{j} f\|^2+\\  &(C  D_4+2C)      \sum_{n=1}^{\infty} J^{n-1}\sum_{ dist(r,j) =n}\   \nu\|\nabla_{r} f\|^2   \end{align} 
From (\ref{5eq8}) and  (\ref{5eq11}) we  get
\begin{align}\nonumber \label{5eq12}\nu \| \nabla_i(\mathbb{E}^j f^{2})^\frac{1}{2}
\|^2\leq & D_{6}\nu \| \nabla_i f\|^2+D_{6}J^{3}+D_{6}J      \sum_{n=0}^{\infty} J^{n}\sum_{ dist(r,j) =n}\   \nu\|\nabla_{r} f\|^2+\\ &\sum_{ r\sim j}\   \nu\|\nabla_{r} (\mathbb{E}^{j}\vert f-\mathbb{E}^{j}f\vert^2  )^{\frac{1}{2}}\|^2\end{align}
for a  constant $D_6=1+  k\tilde c^{2}(3D_{5}+C  D_4+2C)$.  If we replace $f$ by $ f-\mathbb{E}^{j}f$ in (\ref{5eq12}) we get
\begin{align*}\nonumber \nu \| \nabla_i(\mathbb{E}^j (f-\mathbb{E}^{j}f)^{2})^\frac{1}{2}
\|^2\leq & D_{6}\nu \| \nabla_i (f-\mathbb{E}^{j}f)\|^2+D_{6}J^{3}\sum_{ r\sim j}\   \nu\|\nabla_{r} (\mathbb{E}^{j}\vert f-\mathbb{E}^{j}f\vert^2  )^{\frac{1}{2}}\|^2+\\ \nonumber &D_{6}J      \sum_{n=0}^{\infty} J^{n}\sum_{ dist(r,j) =n}\   \nu\|\nabla_{r} (f-\mathbb{E}^{j}f)\|^2\leq \\ & 2D_{6}\nu \| \nabla_i f\|^2+ 2D_{6}\nu \| \nabla_i (\mathbb{E}^{j}f)\|^2+D_{6}J^{3}\sum_{ r\sim j}\   \nu\|\nabla_{r} (\mathbb{E}^{j}\vert f-\mathbb{E}^{j}f\vert^2  )^{\frac{1}{2}}\|^2+\\  & 2D_{6}J^2\sum_{ dist(r,j) =1}\   \nu\|\nabla_{r} (\mathbb{E}^{j}f)\|^2+ 2D_{6}J      \sum_{n=0}^{\infty} J^{n}\sum_{ dist(r,j) =n}\   \nu\|\nabla_{r} f\|^2 \end{align*}
If we use Lemma \ref{4lem7}
to bound the second and fourth term in the right hand side of the last inequality we obtain\begin{align*}\nonumber  \nu \| \nabla_i(\mathbb{E}^j (f-\mathbb{E}^{j}f)^{2})^\frac{1}{2}
\|^2\leq   & 2D_{6}(G_{1} +1) \sum_{n=0}^{\infty} J^{n}\sum_{r:dist(r,i)=n}\   \nu \|\nabla_{r} f\|^2+\\ & D_{6}J^{3}\sum_{ r\sim j}\   \nu\|\nabla_{r} (\mathbb{E}^{j}\vert f-\mathbb{E}^{j}f\vert^2  )^{\frac{1}{2}}\|^2+\\  & 2G_{1}D_{6}J^2\sum_{ dist(r,j) =1}\     \sum_{n=0}^{\infty} J^{n}\sum_{r:dist(s,r)=n}\   \nu \|\nabla_{s} f\|^2+ \\ &2D_{6}J       \sum_{n=0}^{\infty} J^{n}\sum_{ dist(r,j) =n}\   \nu\|\nabla_{r} f\|^2 \end{align*}
Since $i$ and $j$ are neighbours and the $r$'s in the sum of the third term have distance less or equal to two from $i$, we can write \begin{align*}\nonumber  \nu \| \nabla_i(\mathbb{E}^j (f-\mathbb{E}^{j}f)^{2})^\frac{1}{2}
\|^2\leq   & D_7  \sum_{n=0}^{\infty} J^{n}\sum_{r:dist(r,i)=n}\   \nu \|\nabla_{r} f\|^2+\\ & D_{7}J^{3}\sum_{ r\sim j}\   \nu\|\nabla_{r} (\mathbb{E}^{j}\vert f-\mathbb{E}^{j}f\vert^2  )^{\frac{1}{2}}\|^2 \end{align*}
for a constant $D_7=24D_{6}G_{1}+2D_6$. If we take the sum over all $i$ such that $i\sim j$ we get 
 \begin{align*}\nonumber \sum_{ r\sim j}\nu \| \nabla_r(\mathbb{E}^j \vert f-\mathbb{E}^{j}f\vert^2)^\frac{1}{2}
\|^2\leq  & \nonumber   4 D_{7}         \sum_{n=1}^{\infty} J^{n-1}\sum_{r:dist(r,j)=n}\   \nu\|\nabla_{r} f\|^2 \\  & 4D_7J\nu\|\nabla_{j} f\|^2+ \\  & 4D_{7}J^{3}\sum_{ r\sim j}\   \nu\|\nabla_{r} (\mathbb{E}^{j}\vert f-\mathbb{E}^{j}f\vert^2  )^{\frac{1}{2}}\|^2\end{align*}
For $J$ sufficiently small so that $\frac{1}{1-4D_{6}J^{3}}<2$ we obtain
 \begin{align*}\label{quad4.22}\nonumber \sum_{ r\sim j}\nu &\| \nabla_i(\mathbb{E}^j \vert f-\mathbb{E}^{j}f\vert^2)^\frac{1}{2}
\|^2\leq   8 D_{7}         \sum_{n=1}^{\infty} J^{n-1}\sum_{r:dist(s,j)=n}\   \nu\|\nabla_{s} f\|^2+8D_{7}J\nu\|\nabla_{j} f\|^2 \end{align*}
If we use the last inequality to bound the last term on the right hand side of (\ref{5eq12}) we obtain \begin{align*}\nonumber \nu \| \nabla_i(\mathbb{E}^j f)^\frac{1}{2}
\|^2\leq & D_{6}\nu \| \nabla_i f\|^2+8D_{7}D_{6}J^{3}         \sum_{n=1}^{\infty} J^{n-1}\sum_{r:dist(s,i)=n}\   \nu\|\nabla_{s} f\|^2+\\  &D_{6}J      \sum_{n=0}^{\infty} J^{n}\sum_{ dist(r,j) =n}\   \nu\|\nabla_{r} f\|^2+8D_{6}D_{7}J^{4}\nu\|\nabla_{j} f\|^2\\ \leq &    2D_{6}\nu \| \nabla_i f\|^2+ (16D_{7}D_{6}J^{2} +D_6)        \sum_{n=1}^{\infty} J^{n}\sum_{r:dist(s,i)=n}\   \nu\|\nabla_{s} f\|^2 \end{align*}
where above we used (\ref{newJnew}). This  finishes the proof for an appropriate constant $R_3$.
 \end{proof}    
 In the next proposition we will extend the sweeping out relations of the last lemma from the two neighboring nodes to the two infinite dimensional disjoint sets $\Gamma_0$ and $\Gamma_1$.
 \begin{prop}\label{5lem3}Assume   that the  measure $\mu $ satisfies the log-Sobolev inequality and  that the  local specification has quadratic interactions $V$ as in (\ref{quadratic}). Then, for $J$ sufficiently small,  the following sweeping out inequality holds
\begin{equation} \label{5eq13}\nu \| \nabla_{\Gamma_i}(\mathbb{E}^{\Gamma_{j}}f^2)^\frac{1}{2}
\|^2\leq C_1\nu\| \nabla_{\Gamma_i}f\|^2+C_2\nu\| \nabla_{\Gamma_j}f\|^2\end{equation}
for $\{i,j\}=\{0,1\}$ and  constants $C_1\in [1,\infty)$ and $0<C_2<1$.\end{prop}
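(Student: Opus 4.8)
The decisive structural fact is that $\Gamma_0$ and $\Gamma_1$ are independent sets: by construction $\mathrm{dist}(s,t)>1$ for any two sites $s,t$ in the same $\Gamma_k$, so no two sites of $\Gamma_j$ are neighbours. Consequently, once the $\Gamma_i$-coordinates are frozen, the Hamiltonian contains no interaction term coupling two $\Gamma_j$-sites, and the conditional measure factorizes as a product $\mathbb{E}^{\Gamma_j}=\bigotimes_{t\in\Gamma_j}\mathbb{E}^{\{t\}}$. In particular $g:=\mathbb{E}^{\Gamma_j}f^2$ is a function of the $\Gamma_i$-coordinates alone, and for a fixed $i'\in\Gamma_i$ the dependence of the $\mathbb{E}^{\Gamma_j}$-density on $x_{i'}$ enters only through the interactions $V(x_{i'},x_t)$ with $t\sim i'$, all of which lie in $\Gamma_j$. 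I would begin by writing $\nu\|\nabla_{\Gamma_i}(\mathbb{E}^{\Gamma_j}f^2)^{1/2}\|^2=\sum_{i'\in\Gamma_i}\nu\|\nabla_{i'}(\mathbb{E}^{\Gamma_j}f^2)^{1/2}\|^2$ and treating each site $i'$ separately.

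\textbf{Reduction to Lemma \ref{5lem1}.} For a fixed $i'$ and any neighbour $t\sim i'$ I would peel off a single factor, $\mathbb{E}^{\Gamma_j}f^2=\mathbb{E}^{\{t\}}\big(\mathbb{E}^{\Gamma_j\setminus\{t\}}f^2\big)$, and apply Lemma \ref{5lem1} with the roles $i=i'$ and $j=t$ to the function $h:=(\mathbb{E}^{\Gamma_j\setminus\{t\}}f^2)^{1/2}$. The factors of $\mathbb{E}^{\Gamma_j\setminus\{t\}}$ do not depend on $x_{i'}$ and pass through the single-site differentiation unchanged, so the computation of Lemma \ref{5lem1} goes through verbatim, the only change being that the single covariance term is replaced by a sum over the at most four neighbours $t\sim i'$. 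Each such application produces a local term, proportional to $\nu\|\nabla_{i'}f\|^2$, plus a $J$-weighted lattice sum of the form $\sum_{n\ge 0}J^{n}\sum_{\mathrm{dist}(r,i')=n}\nu\|\nabla_r f\|^2$, exactly as in the conclusion of Lemma \ref{5lem1}; iterating over the remaining factors (and using Corollary \ref{4cor6} and \eqref{newJnew} to control the cross terms) closes the recursion.

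\textbf{Resummation and the split into $C_1,C_2$.} The final step is to sum over $i'\in\Gamma_i$ and reorganize the double sum $\sum_{i'\in\Gamma_i}\sum_{n\ge 0}J^{n}\sum_{\mathrm{dist}(r,i')=n}\nu\|\nabla_r f\|^2$. Fixing $r$ and counting the $i'$ with $\mathrm{dist}(r,i')=n$ (bounded by the lattice growth, $\le 4^{n}$), the geometric weight $J^{n}$ dominates this count for $J$ small, so the series converges and each $\nu\|\nabla_r f\|^2$ appears with a finite coefficient. I would then separate the outcome exactly as in Proposition \ref{4prop8}: the diagonal contributions ($n=0$, $r=i'\in\Gamma_i$) assemble into $C_1\nu\|\nabla_{\Gamma_i}f\|^2$ with $C_1\ge 1$, while every contribution landing on a $\Gamma_j$-node carries at least one extra power of $J$ coming from the interaction, so that collecting these yields $C_2\nu\|\nabla_{\Gamma_j}f\|^2$ with $C_2=O(J)$, hence $0<C_2<1$ for $J$ sufficiently small.

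\textbf{Main obstacle.} The substantive difficulty is not the single-site estimate, which is already supplied by Lemma \ref{5lem1}, but the passage from a single-site expectation to the infinite product $\mathbb{E}^{\Gamma_j}$ together with the control of the resulting infinite lattice sums. One must verify that the resummation closes and, crucially, that it splits cleanly so that the $\Gamma_j$-gradient coefficient stays below $1$; this is precisely where the geometric gain of one factor of $J$ per lattice step is used, in the same spirit as the bookkeeping already carried out in Proposition \ref{4prop8}.
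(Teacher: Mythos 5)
Your proposal follows essentially the same route as the paper: decompose over the sites of $\Gamma_i$, reduce to the single-site sweeping-out inequality of Lemma~\ref{5lem1} by peeling off the neighbours of $i'$ one at a time, and then resum the resulting $J$-weighted lattice series against the growth bound $\#\{r:\mathrm{dist}(r,i')=n\}\le 4^n$, with the parity of the distance from $\Gamma_i$ producing the split into $C_1\ge 1$ and $C_2=O(J)<1$. One small imprecision: the factors of $\mathbb{E}^{\Gamma_j\setminus\{t\}}$ attached to the other three neighbours of $i'$ \emph{do} depend on $x_{i'}$ through the interaction, but this is exactly what your recursion over the remaining factors (and the paper's explicit case analysis of the terms at distances $1$, $2$ and $\ge 3$ from $i'$) is there to absorb, so the plan is sound.
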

\begin{proof}
The proof will follow the same  lines of the proof of Proposition \ref{4prop8}. If we denote $\{\sim i\}=\{i_{1},i_{2},i_{3},i_{4}\}$ the neighbours   of  note $i$, then we can write

 \begin{align}\label{5eq14}\nu\| \nabla_{\Gamma_1}(\mathbb{E}^{\Gamma_{0}}f^{2})
^\frac{1}{2}\|^2=&\sum_{i\in \Gamma_1} \nu\| \nabla_{i}(\mathbb{E}^{\Gamma_{0}}f^{2})
^\frac{1}{2}\|^2\leqslant\sum_{i\in \Gamma_1} \nu\| \nabla_{i}(\mathbb{E}^{\{\sim i\}}f^{2})^\frac{1}{2}
\|^2 \end{align}
We can   use   Lemma \ref{5lem1}
to bound the last one \begin{align}\nonumber\label{5eq15}\nu\| \nabla_{i}(\mathbb{E}^{\{\sim i\}}f^{2})^\frac{1}{2}
\|^2=&\nu\| \nabla_{i}(\mathbb{E}^{ i_1}\mathbb{E}^{\{ i_{2},i_3,i_4\}}f^{2})^\frac{1}{2}
\|^2\leq R_{3}\nu\| \nabla_i (\mathbb{E}^{\{ i_{2},i_3,i_4\}}f^{2})^\frac{1}{2}
\|^2+ \\ &  R_{3}    \sum_{n=1}^{\infty} J^{n}\sum_{ dist(r,i) =n}\nu\|  \nabla_{r}(\mathbb{E}^{\{ i_{2},i_3,i_4\}}f^{2})^\frac{1}{2}
\|^{2}\end{align}
We will compute  the second term in the right hand side  of (\ref{5eq15}). For $n=1$ , we have
\begin{align}\nonumber\label{5eq16}
 \sum_{dist(r,i)=1}\nu\|\nabla_{r}( \mathbb{E}^{\{ i_{2},i_3,i_4\}}f^{2})^{\frac{1}{2}}\|^{2}= & \nu\|\nabla_{i_{1}}( \mathbb{E}^{\{ i_{2},i_3,i_4\}}f^{2})^{\frac{1}{2}}\|^{2}+\sum_{ r=i_{2},i_3,i_4} \nu\|\nabla_{r}( \mathbb{E}^{\{ i_{2},i_3,i_4\}}f^{2})^{\frac{1}{2}}\|^{2}
\\ \leq  & \nu\|\nabla_{i_1}f\|^{2} \end{align} 
For $n=2$ , we distinguish between the nodes  $r$ in $\{dist(r,i)=2\}$ which neighbour only one of the neighbours $ \{ i_2,i_3,i_4\}$ of $ i$, which are the $ i'_2,i'_3,i'_4,i_{12},i_{14}$, and these which   neighbour two of the  node in $\{ i_2,i_3,i_4\}$, which are the $i_{23k}$ and  $i_{34}$ neighbouring $i_2,i_3$ and $i_3,i_4$ respectively, as shown in Figure \ref{fig1}. We can then write
\begin{align}\nonumber\label{5eq17}
 \sum_{dist(r,i)=2}\nu\|\nabla_{r}( \mathbb{E}^{\{ i_{2},i_3,i_4\}}f^{2})^{\frac{1}{2}}\|^{2}&=  \sum_{r= i_{23},i_{34}}\nu\|\nabla_{i}( \mathbb{E}^{\{ i_{2},i_3,i_4\}}f^{2})^{\frac{1}{2}}\|^{2}+\\ &\sum_{ r=i'_2,i'_3,i'_4,i_{12},i_{14}} \nu\|\nabla_{r}( 
\mathbb{E}^{\{ i_{2},i_3,i_4\}}f^{2})^{\frac{1}{2}}\|^{2}
  \end{align} 
To bound the second term on the right hand side of (\ref{5eq17}), for any  $r\in\{i'_2,i'_3,i'_4,i_{12},i_{14} \}$ neighbouring a  node $t\in\{ i_{2},i_3,i_4\}$ we use Lemma \ref{5lem1}
 \begin{align*}\nonumber \nu\| \nabla_{r}(\mathbb{E}^{\{ i_{2},i_3,i_4\}}f^{2})^{\frac{1}{2}}\|^2= &\nu\| \nabla_{r}\mathbb{E}^{\{t\}}(\mathbb{E}^{\{ i_{2},i_3,i_4\}\smallsetminus\{t\}}f^{2})^{\frac{1}{2}}\|^2\leq \nu\| \nabla_{r}(\mathbb{E}^{t}f^{2})^{\frac{1}{2}}\|^2 \leq \\ &  R_{3}  \sum_{n=0}^{\infty} J^{n}\sum_{r:dist(s,r)=n}\   \nu(\|\nabla_{s} f\|^2)  \end{align*}
which leads to \begin{align*}\nonumber \sum_{ r=i'_2,i'_3,i'_4,i_{12},i_{14}}\nu\| \nabla_{r}(\mathbb{E}^{\{ i_{2},i_3,i_4\}}f^{2})^{\frac{1}{2}}\|^2  \leq\ & R_{3} \sum_{ dist(r,i)=2} \sum_{n=0}^{\infty} J^{n}\sum_{r:dist(s,r)=n}\   \nu\|\nabla_{s} f\|^2  \end{align*}
Since for nodes $r:dist(r,i)=2$, the nodes $s$ such that $dist(s,r)=n$ have distance from $i$ equal to $n-2,n$ or $n+2$ we get \begin{align}\nonumber\label{5eq18}\sum_{ r=i'_2,i'_3,i'_4,i_{12},i_{14}}\nu&\| \nabla_{r}(\mathbb{E}^{\{ i_{2},i_3,i_4\}}f^{2})^{\frac{1}{2}}\|^2  \leq 8R_{3}J^{2}  \nu\|\nabla_{i} f\|^2+8R_{3}J\sum_{r:dist(s,i)=1}\   \nu\|\nabla_{s} f\|^2+\\  &  8R_{3} \sum_{n=2}^{\infty} J^{n-2}\sum_{r:dist(s,i)=n}\   \nu\|\nabla_{s} f\|^2  \end{align}
To bound   the first term on the right hand side of (\ref{5eq17}), for example for   $r=i_{23}$ neighbouring the nodes $i_{2}$ and $i_3 $ we   use again Lemma \ref{5lem1}
 \begin{align}\nonumber \label{5eq19}\nu\|\nabla_{i_{23}}( \mathbb{E}^{\{ i_{2},i_3,i_4\}}f^{2})^{\frac{1}{2}}\|^{2} \leq & \nu\|\nabla_{i_{23}}(\mathbb{E}^{\{ i_{2},i_3\}}f^{2})^{\frac{1}{2}}\|^{2} \leq \\ &  R_{3} \nu\|\nabla_{i_{23}}(\mathbb{E}^{i_3}f^{2})^{\frac{1}{2}}\|^{2}+ R_{3} \sum_{n=1}^{\infty} J^{n}\sum_{r:dist(s,i_{23})=n}\   \nu\|\nabla_{s} (\mathbb{E}^{i_3}f^{2})^{\frac{1}{2}}\|^2  \end{align}  
The first term on the right hand side of (\ref{5eq19})   by Lemma  \ref{5lem1} is bounded by
 \begin{align}\label{5eq20} \nonumber \nu\|\nabla_{i_{23}}(\mathbb{E}^{i_3}f^{2})^{\frac{1}{2}}\|^{2} \leq & R_{3} \sum_{n=0}^{\infty} J^{n}\sum_{r:dist(s,i_{23})=n}\   \nu\|\nabla_{s} f\|^2 \leq \\ \nonumber \leq &  R_{3}J^{2}  \nu\|\nabla_{i} f\|^2+R_{3}J\sum_{r:dist(s,i)=1}\   \nu\|\nabla_{s} f\|^2+  \\ & R_{3}  \sum_{n=2}^{\infty} J^{n-2}\sum_{r:dist(s,i)=n}\   \nu(\|\nabla_{s} f\|^2) \end{align}  
The term for $n=1$ in the sum in the  second term on the right hand side of  (\ref{5eq19}) becomes  
\begin{align}  \nonumber \label{5eq21}R_3 J\sum_{r:dist(s,i_{23})=1}\   \nu\|\nabla_{s} (\mathbb{E}^{i_3}f^{2})^{\frac{1}{2}}\|^2\leq & R_3J\sum_{r:dist(s,i_{23})=1}\   \nu\|\nabla_{s} f\|^2\leq\\  \leq & R_3J\sum_{r:dist(s,i)=1}\   \nu\|\nabla_{s} f\|^2+  R_3J\sum_{r:dist(s,i)=3}\   \nu\|\nabla_{s} f\|^2 
\end{align}
The terms for $n=2$ on the  sum  of (\ref{5eq19}) can be divided on those that neighbour $i_{23}$ and those that not 
\begin{align}\label{5eq22}\nonumber\sum_{s:dist(s,i_{23})=2}\   \nu\|\nabla_{s} (\mathbb{E}^{i_3}f^{2})^{\frac{1}{2}}\|^2=&\sum_{s:dist(s,i_{23})=2,s\sim i_{3}}\   \nu\|\nabla_{s} (\mathbb{E}^{i_3}f^{2})^{\frac{1}{2}}\|^2+\\ &\sum_{s:dist(s,i_{23})=2,s\nsim i_{3}}\   \nu\|\nabla_{s} (\mathbb{E}^{i_3}f^{2})^{\frac{1}{2}}\|^2 \end{align}
For the second term on the right hand side of (\ref{5eq22})
\begin{align} \label{5eq23}\sum_{s:dist(s,i_{23})=2,s\nsim i_{3}}\   \nu\|\nabla_{s} (\mathbb{E}^{i_3}f^{2})^{\frac{1}{2}}\|^2\leq\sum_{s:dist(s,i_{23})=2,s\nsim i_{3}}\   \nu\|\nabla_{s} f\|^2  
\end{align}
while
for the first term on the right hand side of (\ref{5eq22})
we can use Lemma  \ref{5lem1}
\begin{align}\label{5eq24}\nonumber \sum_{s:dist(s,i_{23})=2,s\sim i_{3}}\   \nu\|\nabla_{s} (\mathbb{E}^{i_3}f^{2})^{\frac{1}{2}}\|^2 \leq  &  R_{3} \sum_{s:dist(s,i_{23})=2,s\sim i_{3}} \sum_{n=0}^{\infty} J^{n}\sum_{r:dist(r,s)=n}\   \nu\|\nabla_{r} f\|^2  \leq \\ \nonumber \leq &   4 R_{3}\sum_{n=0}^{1} J^{n}\sum_{r:dist(s,i)=n}\   \nu\|\nabla_{s} f\|^2+\\  &   4 R_{3} \sum_{n=2}^{\infty}J^{n-2}\sum_{r:dist(r,i)=n}\   \nu\|\nabla_{r} f\|^2
\end{align}
 From (\ref{5eq22})-(\ref{5eq24})
we get the following bound for the terms for $n=2$ on the  sum  of (\ref{5eq19}) \begin{align}\nonumber \label{5eq25}R_{3}J^{2}\sum_{r:dist(s,i_{23})=2}\   \nu\|\nabla_{s} (\mathbb{E}^{i_3}f^{2})^{\frac{1}{2}}\|^2\leq  &   R_3J^{2}\sum_{s:dist(s,i)=2,4}\   \nu\|\nabla_{s} f\|^2+\\ \nonumber & 4R^{2}_{3}J^2 \sum_{n=2}^{\infty}J^{n}\sum_{r:dist(r,i)=n}\   \nu\|\nabla_{r} f\|^2+\\  &   4R^{2}_{3} \sum_{n=0}^{1}J^{n}\sum_{r:dist(r,i)=n}\   \nu\|\nabla_{r} f\|^2 \end{align}  
For every $s:dist(s,i_3)\geq 3$ we have $\nu\|\nabla_{s} (\mathbb{E}^{i_3}f^{2})^{\frac{1}{2}}\|^2\leq \nu\|\nabla_{s} f\|^2$, which gives 
\begin{align}\label{5eq26}\nonumber \sum_{n=3}^{\infty} J^{n}\sum_{r:dist(s,i_{23})=n}\   \nu\|\nabla_{s} (\mathbb{E}^{i_3}f^{2})^{\frac{1}{2}}\|^2\leq & \sum_{n=3}^{\infty} J^{n}\sum_{r:dist(s,i_{23})=n}\   \nu\|\nabla_{s} f\|^2\leq \\  & \sum_{n=3}^{\infty} J^{n-2}\sum_{r:dist(s,i)=n}\   \nu\|\nabla_{s} f\|^2
\end{align}since $dist(i_{23},i)=2$.  

Putting (\ref{5eq20}), (\ref{5eq21}), (\ref{5eq25}) and (\ref{5eq26}) in (\ref{5eq19}) leads to
 \begin{align}\nonumber \label{5eq27}\nu\|\nabla_{i_{23}}( \mathbb{E}^{\{ i_{2},i_3,i_4\}}f^{2})^{\frac{1}{2}}\|^{2} \leq & \nu\|\nabla_{i_{23}}(\mathbb{E}^{\{ i_{2},i_3\}}f^{2})^{\frac{1}{2}}\|^{2} \leq \\ & \nonumber      JR_4\nu\|\nabla_{i} f\|^2+      JR_4     \sum_{r:dist(s,i)=1}\   \nu\|\nabla_{s} f\|^2+ \\  &   R_{4} \sum_{n=2}^{\infty} J^{n-2}\sum_{r:dist(s,i)=n}\   \nu\|\nabla_{s} f\|^2\end{align}  
for $R_4=2R_{3}+5R^{2}_{3} $. The exact  same bound can be obtain for the other term on the  first  sum of the right hand side of (\ref{5eq17}).
Gathering together, (\ref{5eq17}), (\ref{5eq18}) and (\ref{5eq19})
leads to\begin{align}\nonumber\label{5eq28}
 \sum_{dist(r,i)=2}\nu\|\nabla_{r}( \mathbb{E}^{\{ i_{2},i_3,i_4\}}f^{2})^{\frac{1}{2}}\|^{2} \leq  &  JR_5\nu\|\nabla_{i} f\|^2+      JR_5     \sum_{r:dist(s,i)=1}\   \nu\|\nabla_{s} f\|^2+ \\  &   R_{5} \sum_{n=2}^{\infty} J^{n-2}\sum_{r:dist(s,i)=n}\   \nu\|\nabla_{s} f\|^2
  \end{align}
for $R_5=8R_{3}+R_4 $. 

Furthermore, for every $r:dist(r,i)\geq 3$,
\begin{align}\label{5eq29}\nu\|\nabla_{r} (\mathbb{E}^{\{ i_{2},i_3,i_4\}}f^{2})^{\frac{1}{2}}\|^2\leq\nu\|\nabla_{r} f\|^2\end{align} Finally, if we  put (\ref{5eq16}) and (\ref{5eq28}) and (\ref{5eq29}) in (\ref{5eq15}) we obtain 
 \begin{align}\label{5eq30}\nonumber\nu\| \nabla_{i}(\mathbb{E}^{\{\sim i\}}f^{2})^{\frac{1}{2}}\|^2=&\nu\|\nabla_{i}( \mathbb{E}^{\{i_{1} ,i_{2},i_3,i_4\}}f^{2})^{\frac{1}{2}}\|^{2}\leq R_{3}\nu\| \nabla_i (\mathbb{E}^{\{ i_{2},i_3,i_4\}}f^{2})^\frac{1}{2}
\|^2+ \\ &  +   R_{6} \sum_{n=0}^{\infty} J^{n}\sum_{r:dist(s,i)=n}\   \nu\|\nabla_{s} f\|^2        \end{align}
for constant $R_{6}=R_{3}+R_3R_5$.

If we repeat the same  calculation  recursively, for the first term on the right hand side of (\ref{5eq30}), then for   $\nu\|\nabla_{i}( \mathbb{E}^{\{i_3,i_4\}}f^{2})^{\frac{1}{2}}\|^{2}$ and $\nu\|\nabla_{i}( \mathbb{E}^{\{ i_4\}}f^{2})^{\frac{1}{2}}\|^{2}$ we will finally obtain  
 \begin{align*}\nonumber\nu\| \nabla_{i}(\mathbb{E}^{\{\sim i\}}f^{2})^{\frac{1}{2}}\|^2\leq  R_6 \sum_{n=3}^{\infty} J^{n}\sum_{r:dist(s,i)=n}\   \nu\|\nabla_{s} f\|^2  \end{align*}
for a constant $R_6$. From the last inequality and (\ref{5eq14})
we get 
 \begin{align*}\nu\| \nabla_{\Gamma_1}(\mathbb{E}^{\Gamma_{0}}f^{2})
^\frac{1}{2}\|^2\leq    C_1\nu\| \nabla_{\Gamma_i}f\|^2+C_2\nu\| \nabla_{\Gamma_j}f\|^2 \end{align*}
for a constant $C_{2}\leq JR_{6}(\sum_{k=0}^{\infty} (4J)^k)\leq\frac{JR_6}{1-4J} $ for $J$ sufficiently small such that $J<\frac{1}{4}$ and $J<\frac{1}{R_{6}-4}$ the proof of the proposition follows for $C_2<1$.

\end{proof}
 
  \section{log-Sobolev type inequalities.}\label{proof sec6}
   Since the purpose of this paper is to prove the log-Sobolev inequality for the infinite dimensional Gibbs measure without assuming the log-Sobolev inequality for the one site measure   $\mathbb{E}^{i,\omega}$, but the  weaker inequality for the   measure $\mu(dx_{i})=\frac{e^{\phi(x_{i})}dx_i}{\int \phi(x_{i})dx_{i}}$, we will show in this section that when the interactions are quadratic we can obtain a weaker log-Sobolev type inequality for the    $\mathbb{E}^{i,\omega}$ measure. This will be the object of the next proposition.

\begin{prop}\label{6prop1}  Assume   that the  measure $\mu $ satisfies the log-Sobolev inequality and  that the  local specification has quadratic  interactions $V$ as in (\ref{quadratic}). Then, for $J$ sufficiently small,   the one site measure  $\mathbb{E}^{i,\omega}$ satisfies the following log-Sobolev type inequality  \begin{align*} \nu\mathbb{E}^{i,\omega}\left(  f^2\log\frac{f^2}{\mathbb{E}^{i,\omega}f^2}\right)\leq c_{1}\nu\| \nabla_{i} f\|^2+c_{1} \sum_{n=1}^{\infty} J^{n-1}\sum_{r:dist(r,i)=n}\   \nu(\|\nabla_{r} f \|^2   )  \end{align*}
for some positive constant $c_{1}$.
\end{prop}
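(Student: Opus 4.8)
The plan is to transfer the log-Sobolev inequality of $\mu$ to the one-site measure $\mathbb{E}^{i}$ by the same perturbation (substitution) device used in Lemma \ref{lemU1}, and then to remove the stray non-gradient remainder by \emph{centering} the function first. The crucial preliminary observation is that the entropy $\mathbb{E}^{i}(f^2\log\frac{f^2}{\mathbb{E}^{i}f^2})$ is scale invariant but not translation invariant, so I would begin with a Rothaus reduction to mean-zero functions: writing $g=f-\mathbb{E}^{i}f$, so that $\mathbb{E}^{i}g=0$, one has
$$\nu\mathbb{E}^{i}\Big(f^2\log\tfrac{f^2}{\mathbb{E}^{i}f^2}\Big)\leq \nu\,\mathrm{Ent}_{\mathbb{E}^{i}}(g^2)+2\,\nu\mathbb{E}^{i}(g^2).$$
The second term is exactly $2\,\nu\mathbb{E}^{i}|f-\mathbb{E}^{i}f|^2$, which is controlled by the spectral gap estimate of Lemma \ref{teleutSpect1} and produces only gradient terms. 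This centering is precisely what eliminates any residual $\nu(f^2)$ contribution: because every interaction term generated below will multiply $g^2$ rather than $f^2$, I can invoke the \emph{variance-weighted} U-bounds of Corollary \ref{4cor6} and Corollary \ref{4cor2}, instead of the bare U-bound (\ref{Ubound}), and none of these carries a $\nu(f^2)$ term.

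To bound $\mathrm{Ent}_{\mathbb{E}^{i}}(g^2)$ I would use that $\mathbb{E}^{i}$ has density proportional to $e^{-V^i}$ with respect to $\mu$, where $V^i=\sum_{j\sim i}J_{ij}V(x_i,\omega_j)$. Substituting $ge^{-V^i/2}$ into the log-Sobolev inequality (\ref{LS}) for $\mu$ and using $V^i\geq 0$ (so $\mu(e^{-V^i})\leq 1$ and the normalization contributes a nonpositive term that may be dropped), one obtains, exactly as in Lemma \ref{lemU1},
$$\mathrm{Ent}_{\mathbb{E}^{i}}(g^2)\leq 2c\,\mathbb{E}^{i}\|\nabla_i g\|^2+\tfrac{c}{2}\,\mathbb{E}^{i}\big(g^2\|\nabla_i V^i\|^2\big)+\mathbb{E}^{i}\big(g^2 V^i\big).$$
Since $\nabla_i g=\nabla_i f$, the leading term averages to the desired $2c\,\nu\|\nabla_i f\|^2$.

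Next I would estimate the two interaction terms with the quadratic hypothesis (\ref{quadratic}), just as in Lemma \ref{lemU1}, via $\|\nabla_i V^i\|^2\leq 16k^2J^2 d^2(x_i)+4k^2J^2\sum_{j\sim i}d^2(\omega_j)$ and $V^i\leq 4Jk\,d^2(x_i)+Jk\sum_{j\sim i}d^2(\omega_j)$. Taking the Gibbs expectation and using $\nu\mathbb{E}^{i}=\nu$ together with $d^2(\omega_j)=d^2(x_j)$ under $\nu$, every term reduces to $\nu(g^2 d^2(x_i))$ and $\nu(g^2 d^2(x_j))$ with $j\sim i$, each carrying a factor $J$ or $J^2$. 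The neighbour terms $\nu((f-\mathbb{E}^{i}f)^2 d^2(x_j))$ are bounded by Corollary \ref{4cor6}, while the same-site term $\nu((f-\mathbb{E}^{i}f)^2 d^2(x_i))$ is handled by the estimate (\ref{4eq5}) from the proof of Lemma \ref{4lem1}, followed by Lemma \ref{teleutSpect1} and the sweeping-out Lemma \ref{4lem7}; all of these yield $\nu\|\nabla_i f\|^2$ and a tail $\sum_{n\geq1}J^{n-1}\sum_{dist(r,i)=n}\nu\|\nabla_r f\|^2$, with no $\nu(f^2)$ term. Collecting the leading $2c\,\nu\|\nabla_i f\|^2$, the spectral-gap contribution of $2\,\nu\mathbb{E}^{i}(g^2)$, and the $O(J)$ interaction contributions, and choosing $J$ small enough that the $O(J)$ prefactors remain bounded, produces the claim for a suitable $c_1$.

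The main obstacle is exactly the suppression of the $\nu(f^2)$ term. A naive substitution applied to $f$ itself generates terms $\mathbb{E}^{i}(f^2 V^i)$ whose Gibbs average, through the bare U-bound (\ref{Ubound}), yields an irreducible $C\,\nu(f^2)$ that cannot be absorbed into gradients and would be fatal once summed over all sites. The Rothaus centering is what circumvents this, by routing every interaction term through the variance-weighted bounds of Section \ref{section4}; the delicate point to verify carefully is that the centered same-site quantity $\nu((f-\mathbb{E}^{i}f)^2 d^2(x_i))$ and the sweeping terms $\nu\|\nabla_j(\mathbb{E}^{i}f)\|^2$ do reduce to pure gradient sums, which is where Lemma \ref{teleutSpect1} and Lemma \ref{4lem7} are essential.
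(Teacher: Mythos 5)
Your proposal is correct and follows essentially the same route as the paper: the paper likewise perturbs the log-Sobolev inequality of $\mu$ by substituting $e^{-V^i/2}f$, drops the nonnegative normalization term using $V^i\geq 0$, invokes the Rothaus-type entropy estimate (\ref{6eq6}) to center the function and thereby route the interaction terms through the variance-weighted bounds (Lemma \ref{teleutSpect1} and Corollary \ref{4cor6}) rather than the bare U-bound. The only cosmetic difference is the order of operations (you center before perturbing, the paper derives the perturbed inequality (\ref{6eq5}) for general $f$ and then applies it to $f-\mathbb{E}^{i,\omega}f$), which is immaterial.
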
          

\begin{proof} 
 Assume $g\geq 0$. We start with our main assumption that the measure $\mu(dx)=\frac{e^{\phi(x)}dx}{\int \phi(x)dx}$  satisfies the  log-Sobolev inequality for a constant $c_0$, that is    
\begin{align} \label{6eq1}\mu  (g^2\log\frac{g^2}{\mu g^2})\leq c_0\mu \| \nabla_i g \|^2\end{align}
We will interpolate this inequality to create the entropy with respect to the one site measure $\mathbb{E}^{ i,\omega}$ in the left hand side. For this we will  first define the  function
$$V^i=\sum_{ j\sim i}J_{i,j}V(x_{i},\omega_{j})$$
Notice that  $V^i\geq0$. Then inequality (\ref{6eq1}) for  $g=e^\frac{-V ^i}{2} f,f\geq0$ gives
 \begin{align}\nonumber\int e^{-\phi(x_i)}&(e^{-V^i} f^2\log\frac{e^{-V^i} f^2}{\int e^{-\phi(x_i)}(e^{-V^i} f^2)dx_{i} /  \int e^{-\phi(x_i)}
 dx_{i}})dx_{i}  \\  &
\ \ \ \ \ \ \  \   \ \ \ \  \   \   \    \    \    \ 
 \    \    \     \   \    \ \    \    \    \    \    \    \   \    \   \
  \label{6eq2}\leq c_{0}  \int e^{- \phi(x_i)} \| \nabla_{i} (e^\frac{-V^i}{2} f)
\|^2 dx_{i}\end{align}
 \noindent Denote  by $S_r$ and $S_l$ the right and left hand side of~\eqref{6eq2} respectively.
 If we use the Leibnitz rule
for the gradient on the right hand side of~\eqref{6eq2} we have

 \begin{align}\nonumber      S_r\leq&2 c_0\int  e^{-\phi(x_i)} \| e^\frac{{-V^i}}{2} ( \nabla_{i}  f)
\|^2  dx_{i}   \label{6eq3}+2 c_{0}  \int e^{-\phi(x_i)} \| f (\nabla_{i} e^\frac{{-V^i}}{2}  )
\|^2dx_{i}= \nonumber\\ &  \left( \int e^{-\phi(x_i)-V^i}f^2dx_{i} \right) c_02\left( \mathbb{E}^{ i,\omega}\|\nabla_{i} f
\|^2+\frac{ 1}{4}\mathbb{E}^{ i,\omega}f^{2}  \|  \nabla_{i}{V^i}
\|^2\right)\end{align}
On the left hand side of~\eqref{6eq2} we form  the entropy for the measure $\mathbb{E}^{ i,\omega}$ measure with Hamiltonian $\phi(x_i)+V^i$.
\begin{align*}\nonumber S_l = &\int e^{-\phi(x_i)-V^i} f^2log\frac{f^2}{\int e^{-\phi(x_i)-V^i}f^2dx_{i} / \int e^{-\phi(x_i)-V^i}
 dx_{i}} dx_{i}\nonumber \\ &+\int e^{-\phi(x_i)-V^i} f^2\log\frac{\left(\int e^{-\phi(x_i)}
 dx_{i}\right)  e^{-V^i}}{ \int e^{-\phi(x_i)-V^i}
 dx_{i}} dx_{i}\nonumber\\= &\nonumber( \int e^{-\phi(x_i)-V^i}f^2
 dx_{i} ) \left(\mathbb{E}^{ i,\omega}(f^2log\frac{f^2}{ \mathbb{E}^{ i,\omega}f^2}) - \mathbb{E}^{i,\omega}( f^2V^i)\right)\nonumber \\ &+\int e^{-\phi(x_i)-V^i} f^2log\frac{\int e^{-\phi(x_i)}
 dx_{i}   }{ \int e^{-\phi(x_i)-V^i}
 dx_{i}} dx_{i} \end{align*}
Since  $V^i$ is no negative,  the  last  equality leads to 
 \begin{align}\label{6eq4} S_l \geq( \int e^{-\phi(x_i)-V^i}f^2
 dx_{i} )\left(\mathbb{E}^{ i,\omega}(f^2log\frac{f^2}{ \mathbb{E}^{ i,\omega}f^2}) - \mathbb{E}^{ i,\omega}( f^2V^i)\right) \end{align}
 If we combine~\eqref{6eq2} together    with~\eqref{6eq3}  and~\eqref{6eq4} we obtain
\begin{align}\nonumber\mathbb{E}^{ i,\omega}(f^2log\frac{f^2}{\mathbb{E}^{ i,\omega}f^2}) \leq &2c_{0} \mathbb{E}^{ i,\omega}\| \nabla_{i} f
\|^2+\mathbb{E}^{ i,\omega}(f^2(\frac{ c_{0} \|  \nabla_{i}V^i
\|^2}{2}+V^i)) \nonumber \end{align}
If take the expectation with respect to the Gibbs  measure in  the last relationship we have
\begin{equation}\label{6eq5}\nu(f^2log\frac{f^2}{\mathbb{E}^{   i,\omega}f^2}) \leq 2 c_{0} \nu\| \nabla_{i} f
\|^2+\nu(f^2(\frac{ c_{0} \|  \nabla_iV^i
\|^2}{2}+V^i))\end{equation}
 From  \cite{B-Z} and \cite{R}   the  following estimate  
 of the entropy holds
 \begin{align}\label{6eq6}\mathbb{E}^{ i,\omega}(f^{2}log\frac{f^{2}}{\mathbb{E}^{ i,\omega}f^{2}}) \leq &  A\mathbb{E}^{ i,\omega}( f-\mathbb{E}^{ i,\omega}f )^{2} +
 \mathbb{E}^{ i,\omega}( f-\mathbb{E}^{ i,\omega}f )^{2}log\frac{( f-\mathbb{E}^{ i,\omega}f )^{2}}{\mathbb{E}^{ i,\omega}( f-\mathbb{E}^{ i,\omega}f )^{2}}\end{align}
  for some positive constant $A$.  If we take expectations with respect to  the Gibbs measure at the last  inequality we get
 \begin{align}\nu( \vert f \vert^2log\frac{ \vert f \vert^2}{\mathbb{E}^{ i,\omega} \vert f \vert^2}) \leq &A\nu\vert f-\mathbb{E}^{ i,\omega}f\vert^{2}\label{6eq7}+\nu(\vert f-\mathbb{E}^{ i,\omega}f\vert^2log\frac{\vert f-\mathbb{E}^{ i,\omega}f\vert^2}{\mathbb{E}^{ i,\omega}\vert f-\mathbb{E}^{ i,\omega}f\vert^2})\end{align}
 We can now use~\eqref{6eq5} to bound the second   term on the right hand side of~\eqref{6eq7}. Then we will obtain
 \begin{align*} \nonumber\nu(f^2&log \frac{f^2}{\mathbb{E}^{ i,\omega}f^2}) \leq A\nu \vert f -\mathbb{E}^{ i,\omega}f\vert^2  +2c_{0} \nu\| \nabla_{i} f
\|^2+\nu(\vert f-\mathbb{E}^{i,\omega}f\vert^{2} (\frac{ c_{0} \|  \nabla_i{V^i}
\|^2}{2}+V^i)) \leq  \\ &A\nu\vert f -\mathbb{E}^{ i,\omega}f\vert^2  +2c_{0} \nu\| \nabla_{i} f
\|^2+\sum_{ j\sim i}J_{i,j}\nu(\vert f-\mathbb{E}^{i,\omega}f\vert^{2} (2c_{0} \|  \nabla_i{V(x_{i},\omega_{j})}
\|^2+V(x_{i},\omega_{j})))\end{align*}
If we take under account  that we are considering quadratic interactions and bound $V$ and $\|  \nabla_i{V^i}
\|^2$ by (\ref{quadratic}) we get
 \begin{align*} \nonumber\nu(f^2&log \frac{f^2}{\mathbb{E}^{ i,\omega}f^2}) \leq  (A+4)\nu\vert f -\mathbb{E}^{ i,\omega}f\vert^2  +2c_{0} \nu\| \nabla_{i} f
\|^2+\\  &4(2c_{0}+1)kJ\nu(\vert f-\mathbb{E}^{i,\omega}f\vert^{2}  d^2(x_i) ) +(2c_{0}+1)kJ\sum_{ j\sim i} \nu(\vert f-\mathbb{E}^{i,\omega}f\vert^{2}  d^2(\omega_j) )\end{align*}
where above we also used  that $V(x_{i},\omega_{j})\leq 1+\left\vert V(x_{i},\omega_{j})\right\vert^2$. We can bound the first term on the right hand side by  Lemma \ref{teleutSpect1} 
and the third and the  fourth term   by Corollary \ref{4cor6}. 
 \begin{align*} \nonumber\nu(f^2log \frac{f^2}{\mathbb{E}^{ i,\omega}f^2}) \leq c_{1}\nu\| \nabla_{i} f\|^2+c_{1} \sum_{n=1}^{\infty} J^{n-1}\sum_{r:dist(r,i)=n}\   \nu(\|\nabla_{r} f \|^2   )   \end{align*}
which finishes the proof of  the proposition for $c_1=  (A+4)4(D_{L}+D_{3})+6(2c_{0}+1)k  D_{5}$.\end{proof}
    We  now prove a log-Sobolev type inequality for the product measure $\mathbb{E}^{ \Gamma_{k}}$ for $k=0,1$.\begin{prop} \label{6prop2}
Assume    that  the   measure $\mu $ satisfies the log-Sobolev inequality and  that the  local specification has quadratic interactions $V$ as in (\ref{quadratic}). Then, for $J$ sufficiently small,  the following log-Sobolev type inequality for the product measures  $\mathbb{E}^{\Gamma_k,\omega}$ holds  
 \begin{align*} \nu\mathbb{E}^{ \Gamma_{k}}(f^2log\frac{f^2}{\mathbb{E}^{ \Gamma_{k}} f^2}) \leq\tilde C \nu\| \nabla_{\Gamma_0}
f
\|^2+\tilde C\nu\| \nabla_{\Gamma_1} f
\|^2\end{align*}for $k=0,1$, and  some positive constant $\tilde C$.
\end{prop}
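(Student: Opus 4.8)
The plan is to exploit the bipartite structure of the decomposition $\mathbb{Z}^2=\Gamma_0\cup\Gamma_1$: since no two sites of $\Gamma_k$ are neighbours, the block measure $\mathbb{E}^{\Gamma_k,\omega}$ factorizes into a product of one-site measures, and the sub-additivity of entropy then reduces the statement to the single-site estimate of Proposition \ref{6prop1}.

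First I would record the product structure. Because $dist(i,j)>1$ for all $i,j\in\Gamma_k$, the interaction sum $\sum_{i,j\in\Gamma_k,\,j\sim i}J_{ij}V(x_i,x_j)$ appearing in $H^{\Gamma_k,\omega}$ is empty, so $H^{\Gamma_k,\omega}=\sum_{i\in\Gamma_k}H^{i,\omega}$ and consequently $\mathbb{E}^{\Gamma_k,\omega}=\bigotimes_{i\in\Gamma_k}\mathbb{E}^{i,\omega}$, each factor depending only on $x_i$ and the frozen boundary spins (which lie in $\Gamma_{1-k}$). This is the observation that makes the argument work.

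Next I would apply the sub-additivity of entropy for product measures with $g=f^2$, yielding
\[
\mathbb{E}^{\Gamma_k}\Big(f^2\log\tfrac{f^2}{\mathbb{E}^{\Gamma_k}f^2}\Big)\leq \sum_{i\in\Gamma_k}\mathbb{E}^{\Gamma_k}\Big(\mathbb{E}^{i}\big(f^2\log\tfrac{f^2}{\mathbb{E}^{i}f^2}\big)\Big).
\]
Taking $\nu$-expectation and using the Markov property $\mathbb{E}^{\Gamma_k}\mathbb{E}^i=\mathbb{E}^{\Gamma_k}$ together with $\nu\mathbb{E}^{\Gamma_k}=\nu$, each summand collapses to $\nu\mathbb{E}^i(f^2\log\frac{f^2}{\mathbb{E}^if^2})$, which I bound by Proposition \ref{6prop1}. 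This gives
\[
\nu\mathbb{E}^{\Gamma_k}\Big(f^2\log\tfrac{f^2}{\mathbb{E}^{\Gamma_k}f^2}\Big)\leq c_1\sum_{i\in\Gamma_k}\nu\|\nabla_i f\|^2+c_1\sum_{i\in\Gamma_k}\sum_{n=1}^{\infty}J^{n-1}\sum_{r:dist(r,i)=n}\nu\|\nabla_r f\|^2.
\]

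Finally I would resum the double sum by fixing $r$ and collecting its total coefficient $\sum_{i\in\Gamma_k}J^{dist(r,i)-1}$. Since on $\mathbb{Z}^2$ the number of sites at distance $n$ from $r$ is at most $4^n$, this coefficient is bounded uniformly in $r$ by $\sum_{n\geq1}4^n J^{n-1}=\frac{4}{1-4J}$ for $J<\tfrac14$. The first term equals $\nu\|\nabla_{\Gamma_k}f\|^2\leq\nu\|\nabla_{\Gamma_0}f\|^2+\nu\|\nabla_{\Gamma_1}f\|^2$, while the double sum is at most $\frac{4}{1-4J}\,\nu\|\nabla_{\mathbb{Z}^2}f\|^2=\frac{4}{1-4J}(\nu\|\nabla_{\Gamma_0}f\|^2+\nu\|\nabla_{\Gamma_1}f\|^2)$. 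The proposition then follows with $\tilde C=c_1\big(1+\frac{4}{1-4J}\big)$. The only genuinely delicate point is justifying the product structure together with the identity $\nu\mathbb{E}^{\Gamma_k}=\nu$ for the infinite set $\Gamma_k$, which is handled by the usual finite-volume approximation; everything else is bookkeeping of a convergent geometric series.
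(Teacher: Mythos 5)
Your proof is correct, but it takes a genuinely different route from the paper. The paper does not use sub-additivity: it writes the entropy of $\mathbb{E}^{\Gamma_1}$ as the \emph{exact} telescoping (martingale) decomposition along a spiral enumeration $a_1,a_2,\dots$ of $\Gamma_1$, so that the $k$-th term is the entropy of the partially integrated function $\mathbb{E}^{a_{k-1}}\cdots\mathbb{E}^{a_1}f^2$ with respect to $\mathbb{E}^{a_k}$. Applying Proposition \ref{6prop1} to that term produces gradients of $(\mathbb{E}^{a_{k-1}}\cdots\mathbb{E}^{a_1}f^2)^{1/2}$ at sites neighbouring the already-integrated ones, and the paper must then invoke the second sweeping-out machinery of Section \ref{secLS} (Lemma \ref{5lem1} and the bound $\Theta(i)\le R_7\Lambda(i)$) to convert these back into gradients of $f$ before resumming. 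Your replacement of the exact decomposition by the tensorization inequality $\mathrm{Ent}_{\otimes_i\mu_i}(g)\le\sum_i\mathbb{E}[\mathrm{Ent}_{\mu_i}(g)]$ keeps $f^2$ intact inside every conditional entropy, so Proposition \ref{6prop1} applies directly and the whole sweeping-out step disappears from this proof; the geometric resummation with the crude count $\#\{i:\mathrm{dist}(i,r)=n\}\le 4^n$ is exactly as in the paper. What you lose is nothing for this proposition (the sweeping-out relations are still needed elsewhere, e.g.\ in Proposition \ref{7prop1} and the final iteration, so they cannot be dispensed with globally), and what you gain is a shorter, more transparent argument. The two points you should make explicit are the ones you already flag: the factorization $\mathbb{E}^{\Gamma_k,\omega}=\bigotimes_{i\in\Gamma_k}\mathbb{E}^{i,\omega}$ (valid because no two sites of $\Gamma_k$ are adjacent, so the intra-block interaction sum is empty) and the extension of sub-additivity and of the DLR identity $\nu\mathbb{E}^{\Gamma_k}=\nu$ to the infinite set $\Gamma_k$ by finite-volume approximation and monotone convergence — the paper relies on the same identity for infinite $\Gamma_k$ without further comment, so you are not assuming more than it does.
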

\begin{proof} We will prove Proposition~\ref{6prop2} for $k=1$,
 that is  $$\nu\mathbb{E}^{ \Gamma_{1}}(f^2log\frac{f^2}{\mathbb{E}^{ \Gamma_{1}}f^2}) \leq\tilde C \nu\| \nabla_{\Gamma_0}
f
\|^2+\tilde C\nu\| \nabla_{\Gamma_1} f
\|^2$$ for $f\geq 0$.  

In the proof of this proposition we will use the following estimation. For any $i\in \mathbb{Z}$ and $\{ \sim i \}=i_1,i_2,i_3,i_4$ denote
\begin{align*}\Theta(i):&=\nu\|\nabla_{i}( \mathbb{E}^{\{i_{1} ,i_{2},i_3,i_4\}}f^{2})^{\frac{1}{2}}\|^{2} +\nu \|\nabla_{i}( \mathbb{E}^{\{ i_{2},i_3,i_4\}}f^{2})^{\frac{1}{2}}\|^{2}+\\ &+\nu\|\nabla_{i}( \mathbb{E}^{\{ i_3,i_4\}}f^{2})^{\frac{1}{2}}\|^{2}+\nu\|\nabla_{i}( \mathbb{E}^{\{ i_4\}}f^{2})^{\frac{1}{2}}\|^{2}\end{align*}
and
\begin{align*}\Lambda(i):=  \sum_{n=0}^{\infty} J^{n}\sum_{r:dist(s,i)=n}\   \nu\|\nabla_{s}  f\|^2 \end{align*} From the calculations of the components of  the sum of $\Theta(i)$ in the proof of   Proposition \ref{5lem3}  and the recursive inequality (\ref{5eq30}) we can surmise that there exists an $R_7>0$   such that   \begin{align}\label{6eq8} \Theta(i) \leq  R_{7}\Lambda(i)
\end{align}  We start with the  following enumeration of the nodes in $\Gamma_1$ as depicted in figure \ref{fig2}.
\begin{figure}[h]
           \begin{center}
\epsfig{file=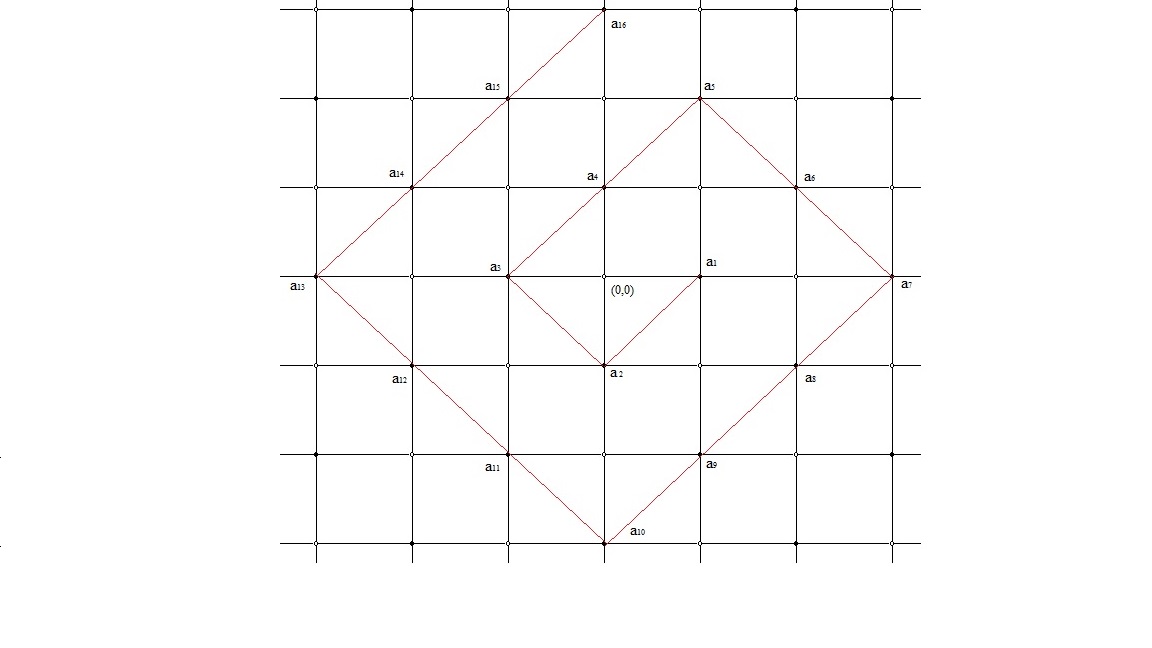, height=4.5cm}
\caption{$\circ = \Gamma_0$, $\bullet = \Gamma_1$}
\label{fig2}
           \end{center}
\end{figure}
 Denote the nodes in $\Gamma_1$ closest to $(0,0)$, that is the neighbours of $(0,0)$ and name them $a_1,a_2,a_3,a_4$,  with $a_1$ being any of the four and the rest named clockwise. Then choose any of the nodes in $\Gamma_1$ of distance two from $a_4$ and distance three from $(0,0)$, and name it $a_5$ and continue clockwise the enumeration with the rest of the nodes in $\Gamma_1$ of distance three form $(0,0)$. Then the same for the nodes of $\Gamma_1$ of distance four from $(0,0)$. We continue with the same way with the nodes in $\Gamma_1$ of higher distances from $(0,0)$, moving clockwise while we move away  from $(0,0)$. In this way the nodes in $\Gamma_1$ are enumerated in a spiral way moving clockwise away from $(0,0)$.   In that way we can write $\mathbb{E}^{ \Gamma_{1}}=\sqcap_{i=1}^{+\infty}\ \mathbb{E}^{ a_{i}}$.

Then
  the entropy of the product measure $\mathbb{E}^{ \Gamma_{1}}$ can be calculated by being expressed in terms of the entropies of single nodes in $\Gamma_1$ for which we have shown a log-Sobolev type inequality
in Proposition \ref{6prop1}.
\begin{align}\nu\mathbb{E}^{ \Gamma_1}(f^2\log\frac{f^2}{\mathbb{E}^{ \Gamma_1}f^2}) = \label{6eq9}\sum_{k=1}^{+\infty}\nu\mathbb{E}^{a_k}(\mathbb{E}^{ a_{k-1}}...\mathbb{E}^{a_1}f^2\log\frac{\mathbb{E}^{a_{k-1}}...\mathbb{E}^{a_1}f^2}{\mathbb{E}^{a_k}...\mathbb{E}^{a_1}f^2})\end{align}
To compute the entropies in the right hand side of (\ref{6eq9})  we will use the log-Sobolev type inequality for the one  site measure $\mathbb{E}^{k }$ from Proposition  \ref{6prop1}.
 \begin{align}\label{6eq10} \nu\mathbb{E}^{  a_{k}}(\mathbb{E}^{ a_{k-1}}...\mathbb{E}^{ a_{1}}f^2\log\frac{\mathbb{E}^{a_{k-1}}...\mathbb{E}^{a_{1}}f^2}{\mathbb{E}^{a_{k}}...\mathbb{E}^{a_{1}}f^2}) \leq & c_{1}\nu\| \nabla_{a_{k}} f\|^2+ \\ \nonumber & c_{1} \sum_{n=1}^{\infty} J^{n-1}\sum_{j:dist(j,a_{k})=n}\   \nu\|\nabla_{j} (\mathbb{E}^{ a_{k-1}}...\mathbb{E}^{ a_{1}}f^2)^\frac{1}{2}\|^2 \end{align}
 where above we used that $\nu\|\nabla_{a_k} (\mathbb{E}^{ a_{k-1}}...\mathbb{E}^{ a_{1}}f^2)^\frac{1}{2}\|^2 \leq \nu\|\nabla_{a_k} f\|^2$, since  by the way the spiral was constructed it's elements do not neighbour  with each other. For every $j$ that neighbours with at least one of the $a_{k-1},a_{k-2},...,a_1$ we have  that 
\begin{align*}\nu\|\nabla_{j} (\mathbb{E}^{ a_{k-1}}...\mathbb{E}^{ a_{1}}f^2)^\frac{1}{2}\|^2 \leq\Theta(j)
\end{align*}
which because of (\ref{6eq8}) implies
\begin{align}\label{6eq11}\nu\|\nabla_{j} (\mathbb{E}^{ a_{k-1}}...\mathbb{E}^{ a_{1}}f^2)^\frac{1}{2}\|^2 \leq R_{7}\Lambda(j)
\end{align}
For every $j$ that does not neighbour with any of the $a_{k-1},a_{k-2},...,a_1$ we have  that 
\begin{align}\label{6eq12}\nu\|\nabla_{j} (\mathbb{E}^{ a_{k-1}}...\mathbb{E}^{ a_{1}}f^2)^\frac{1}{2}\|^2 \leq \nu\|\nabla_{j} f\|^2 \end{align}
Putting   (\ref{6eq11}) and (\ref{6eq12}) in (\ref{6eq10}) we obtain   \begin{align*} \nonumber\nu\mathbb{E}^{  a_{k}}(\mathbb{E}^{ a_{k-1}}...\mathbb{E}^{ a_{1}}f^2\log\frac{\mathbb{E}^{a_{k-1}}...\mathbb{E}^{a_{1}}f^2 }{\mathbb{E}^{a_{k}}...\mathbb{E}^{a_{1}}f^2}) \leq  & c_{1}\nu\| \nabla_{a_{k}} f\|^2+\\ & c_{1} R_7\sum_{n=1}^{\infty} J^{n-1}\sum_{j:dist(j,a_{k})=n}\Lambda(j)+\\ &  c_{1} \sum_{n=1}^{\infty} J^{n-1}\sum_{j:dist(j,a_{k})=n}   \nu\|\nabla_{j} f\|^2  \end{align*}
 Combining this with (\ref{6eq9})  \begin{align*}\nu\mathbb{E}^{ \Gamma_1}(f^2\log\frac{f^2}{\mathbb{E}^{ \Gamma_1}f^2}) \leq  & c_{1}\sum_{k=1}^{+\infty}\nu\| \nabla_{a_{k}} f\|^2+\\ & c_{1} R_7\sum_{k=1}^{+\infty}\sum_{n=1}^{\infty} J^{n-1}\sum_{j:dist(j,a_{k})=n}\Lambda(j)+\\ &  c_{1}R_7\sum_{k=1}^{+\infty} \sum_{n=1}^{\infty} J^{n-1}\sum_{j:dist(j,a_{k})=n}   \nu\|\nabla_{j} f\|^2\end{align*}
Since in the last two sums above, for every $j \in \mathbb{Z}^2$, the terms $\Lambda(j)$ and  $\nu\|\nabla_{j} f\|^2$ appear one time for every $i\in \Gamma_1$ with a coefficient $J^{dist({i,j)-1}}$, the accompanying coefficient for any of these terms  is $\frac{\sum_{k=0}^{\infty}(4J)^k}{J}$ since for every  node $j$, $\#\{i: dist(i,j)=n\}\leq 4^n$. So by rearranging the terms in the last inequality we get \begin{align*}\nu\mathbb{E}^{ \Gamma_1}(f^2\log\frac{f^2}{\mathbb{E}^{ \Gamma_1}f^2}) \leq  & c_{1}\nu\| \nabla_{\Gamma_1} f\|^2+\frac{    R_7c_{1}}{J} \sum_{j\in \mathbb{Z}^2}\left( \sum_{n=0}^{\infty}(4J)^{n}\right) \Lambda(j)+\\ & \frac{R_7c_{1}}{J} \sum_{j\in \mathbb{Z}^2} \sum_{n=0}^{\infty}(4J)^{n} \nu\|\nabla_{j} f\|^2\\ =&  c_{1}\nu\| \nabla_{\Gamma_1} f\|^2+ \frac{R_7c_{1}}{J(1-4J)} \sum_{j\in \mathbb{Z}^2}  \Lambda(j)+\frac{R_7c_{1}}{J(1-4J)} \sum_{j\in \mathbb{Z}^2}  \nu\|\nabla_{j} f\|^2\end{align*}
for  $J<\frac{1}{4}$.   For the first sum 
\begin{align*}\sum_{j\in \mathbb{Z}^2}\Lambda(j)&=  \sum_{j\in \mathbb{Z}^2}\sum_{n=0}^{\infty} J^{n}\sum_{r:dist(s,j)=n}\   \nu\|\nabla_{s}  f\|^2\leq \sum_{j\in \mathbb{Z}^2} \left(\sum_{n=0}^{\infty}(4J)^{n}\right) \nu\|\nabla_{j} f\|^2\\  &=\frac{1}{1-4J}\sum_{j\in \mathbb{Z}^2}   \nu\|\nabla_{j} f\|^2
\end{align*} We finally get  \begin{align*}\nu\mathbb{E}^{ \Gamma_1}(f^2\log\frac{f^2}{\mathbb{E}^{ \Gamma_1}f^2}) \leq  c_{1}\nu\| \nabla_{\Gamma_1} f\|^2+ \frac{2R_7c_{1}}{J(1-4J)^{2}} \sum_{j\in \mathbb{Z}^2}  \nu\|\nabla_{j} f\|^2\end{align*}
 \end{proof}

\section{spectral gap and proof of main theorem}\label{spectralgap}
 In    Proposition~\ref{6prop1} and Proposition~\ref{6prop2}  we showed 
a log-Sobolev type inequality for the one
 site measure $\mathbb{E}^{i,\omega}$and then obtained a similar inequality through it for the product measures $\mathbb{E}^{ \Gamma_{i},\omega},i=0,1$.
 In   Lemma~\ref{teleutSpect1} a spectral gap  type inequality was also shown for the one site measure  $\mathbb{E}^{i,\omega}$ for  both cases.  In the following proposition  the spectral gap type inequality of Lemma~\ref{teleutSpect1}  will   
be extended
to the product measure $\mathbb{E}^{\Gamma_{i},\omega},i=0,1$. However, this does not happen through the Spectral Gap type inequality for the one site measure   $\mathbb{E}^{i,\omega}$ of Lemma \ref{teleutSpect1}  but through the log-Sobolev type inequality for $\mathbb{E}^{ \Gamma_{i},\omega},i=0,1$ of Proposition \ref{6prop2}. 
\begin{prop}\label{7prop1}Assume   that the  measure $\mu $ satisfies the log-Sobolev inequality and  that the  local specification has quadratic interactions $V$ as in (\ref{quadratic}). Then, for $J$ sufficiently small,  the following
spectral gap type inequality holds  
 $$\nu\vert f-\mathbb{E}^{ \Gamma_{i},\omega}f\vert^2\ \leq \tilde R\nu\| \nabla_{\Gamma_{0}}
f\|^2+ \tilde R\nu\| \nabla_{\Gamma_{1}}
f\|^2$$for $i=0,1$ and  some positive constant $\tilde R$.\end{prop}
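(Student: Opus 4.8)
The strategy is to obtain the spectral gap inequality as the infinitesimal, linearized form of the logarithmic Sobolev type inequality for the product measures $\mathbb{E}^{\Gamma_k,\omega}$ established in Proposition \ref{6prop2}. This is the standard mechanism by which a log-Sobolev inequality implies its associated Poincar\'e inequality (as already recorded in the introduction), here carried out in the annealed form in which the outer expectation is taken with respect to the Gibbs measure $\nu$. The text's hint is precisely that we should route through Proposition \ref{6prop2} rather than through the one-site spectral gap of Lemma \ref{teleutSpect1}.

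First I would record the elementary identity that rewrites the quantity of interest as an averaged conditional variance. Since $\nu\mathbb{E}^{\Gamma_i}=\nu$ by the Markov property and $\mathbb{E}^{\Gamma_i}f$ does not depend on the coordinates in $\Gamma_i$, a direct expansion gives
$$\nu\vert f-\mathbb{E}^{\Gamma_i,\omega}f\vert^2=\nu\mathbb{E}^{\Gamma_i}\left(f-\mathbb{E}^{\Gamma_i}f\right)^2=\nu\left(\mathbb{E}^{\Gamma_i}f^2-(\mathbb{E}^{\Gamma_i}f)^2\right),$$
so it suffices to bound the averaged one-block variance on the right. I would then apply the inequality of Proposition \ref{6prop2} to the perturbed function $1+\epsilon f$ for a small parameter $\epsilon>0$. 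Expanding the entropy of $(1+\epsilon f)^2$ with respect to $\mathbb{E}^{\Gamma_i}$ in powers of $\epsilon$, the linear terms cancel and the leading contribution is quadratic,
$$\mathbb{E}^{\Gamma_i}\left((1+\epsilon f)^2\log\frac{(1+\epsilon f)^2}{\mathbb{E}^{\Gamma_i}(1+\epsilon f)^2}\right)=2\epsilon^2\left(\mathbb{E}^{\Gamma_i}f^2-(\mathbb{E}^{\Gamma_i}f)^2\right)+O(\epsilon^3),$$
while the right hand side is exactly $\epsilon^2$-homogeneous since $\|\nabla_{\Gamma_k}(1+\epsilon f)\|^2=\epsilon^2\|\nabla_{\Gamma_k}f\|^2$. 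Taking the $\nu$-expectation, dividing by $\epsilon^2$ and letting $\epsilon\to0$ yields
$$2\,\nu\left(\mathbb{E}^{\Gamma_i}f^2-(\mathbb{E}^{\Gamma_i}f)^2\right)\leq\tilde C\,\nu\|\nabla_{\Gamma_0}f\|^2+\tilde C\,\nu\|\nabla_{\Gamma_1}f\|^2,$$
which, combined with the identity above, gives the claim with $\tilde R=\tilde C/2$.

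The genuine obstacle is purely analytic: justifying the $\epsilon\to0$ passage inside the outer $\nu$-expectation. The pointwise Taylor expansion of the entropy carries a remainder whose size is controlled by $f$ through its supremum, so I would first prove the inequality for bounded $f$, where dominated convergence applies directly and the remainder is uniformly $O(\epsilon^3)$, and only afterwards remove the boundedness by a truncation and approximation argument. Here the standing integrability hypothesis $fd\in L_2(\nu)$, together with the U-bound inequality (\ref{Ubound}) of Proposition \ref{propUbound}, provides the control needed to pass the gradient and variance terms to the limit along the approximating sequence. Since all the constants entering Proposition \ref{6prop2} and the expansion are uniform in the boundary configuration $\omega$ and symmetric in the two sublattices, the same conclusion holds for both $i=0$ and $i=1$, completing the argument.
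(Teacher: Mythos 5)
Your argument is correct, and it is in fact more economical than the route the paper indicates. The paper does not prove Proposition \ref{7prop1} in situ: it defers to Lemma 7.1 of \cite{Pa1} and states that the argument rests on the log-Sobolev type inequality for the product measures (Proposition \ref{6prop2}) \emph{together with} the sweeping-out relations of Proposition \ref{4prop8}. Your proof uses only Proposition \ref{6prop2}, via the classical Rothaus linearization $f\mapsto 1+\epsilon f$: the expansion $\mathrm{Ent}_{\mathbb{E}^{\Gamma_k}}\big((1+\epsilon f)^2\big)=2\epsilon^2\big(\mathbb{E}^{\Gamma_k}f^2-(\mathbb{E}^{\Gamma_k}f)^2\big)+O(\epsilon^3)$ holds pointwise in the conditioning variables with a remainder controlled by $\Vert f\Vert_\infty$, the right-hand side of Proposition \ref{6prop2} is exactly $\epsilon^2$-homogeneous, and the identity $\nu\mathbb{E}^{\Gamma_i}=\nu$ (used freely by the paper itself, e.g.\ to pass to (\ref{7eq8})) converts the averaged conditional variance into $\nu\vert f-\mathbb{E}^{\Gamma_i}f\vert^2$. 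So the annealed inequality linearizes with no extra input, yielding $\tilde R=\tilde C/2$ and making Proposition \ref{4prop8} superfluous at this particular step (it is of course still needed in Proposition \ref{7prop2} and in the final iteration). Your decision to first treat bounded $f$ and then truncate is exactly the right way to justify the $\epsilon\to0$ passage under the outer $\nu$-integral; the appeal to the U-bound there is actually unnecessary, since $\Vert\nabla f_n\Vert\le\Vert\nabla f\Vert$ for the truncations $f_n=(f\wedge n)\vee(-n)$ and the $L_2(\nu)$-contractivity of $\mathbb{E}^{\Gamma_i}$ already let you pass both sides to the limit.
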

The proof of this proposition is based in the use of  the  log-Sobolev type inequality for the product measures $\E^{\Gamma_k},k=0,1$ of Proposition \ref{6prop2} and the sweeping out relations for the same measures from Proposition \ref{4prop8}. The proof of this proposition is presented in Lemma 7.1 of \cite{Pa1}. 

Spectral Gap inequalities have been associated with convergence to equilibrium and ergodic properties. In the next proposition  we will use the weaker spectral gap inequality for the product measures  $\mathbb{E}^{ \Gamma_i,\omega},i=0,1$ of 
 Proposition \ref{7prop1} to show  the a.e. convergence of $ \mathcal{P}^n  $ to the infinite dimensional Gibbs measure $\nu$, where $\mathcal{P}^n$ is defined as follows
  \begin{align}\label{7eq1} \mathcal{P}^nf=\begin{cases}  
   f & n =0 \\
 \mathbb{E}^{\Gamma_{0}}\mathcal{P}^{n-1}f & n \text{\ odd} \\
\mathbb{E}^{\Gamma_{1}}\mathcal{P}^{n-1}f & n \text{\ even\ }>0
\end{cases} \end{align}
 \begin{prop}\label{7prop2}Assume   that the  measure $\mu $ satisfies the log-Sobolev inequality and  that the  local specification has quadratic interactions $V$ as in (\ref{quadratic}). Then, for $J$ sufficiently small, and  $\mathcal{P}$ as in (\ref{7eq1}),    $\mathcal{P}^n f$ converges $\nu$-a.e. to the Gibbs measure $\nu$.
 \end{prop}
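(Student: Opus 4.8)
The plan is to show that the total sublattice gradient energy
$E_n:=\nu\|\nabla_{\Gamma_0}\mathcal{P}^nf\|^2+\nu\|\nabla_{\Gamma_1}\mathcal{P}^nf\|^2$
decays geometrically, then to convert this decay into summable $L^2$-increments via the spectral gap inequality of Proposition \ref{7prop1}, and finally to upgrade $L^2$ convergence to $\nu$-a.e. convergence by absolute summability of the increments.

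First I would exploit the alternating structure of $\mathcal{P}$. Writing $u_n=\mathcal{P}^nf$, the point is that $\mathbb{E}^{\Gamma_0}u$ does not depend on the $\Gamma_0$-coordinates, so $\nabla_{\Gamma_0}(\mathbb{E}^{\Gamma_0}u)=0$, and symmetrically for $\Gamma_1$. Hence after each conditioning step exactly one of the two sublattice gradients of $u_n$ vanishes. Combining this with the sweeping-out inequality of Proposition \ref{4prop8} — together with its mirror image obtained by exchanging the roles of $\Gamma_0$ and $\Gamma_1$, valid since the two sublattices are symmetric in the construction — gives, for $n\ge 2$ with $\{j,j'\}=\{0,1\}$,
$$E_n \;=\; \nu\|\nabla_{\Gamma_{j}}u_n\|^2 \;\le\; R_1\,\nu\|\nabla_{\Gamma_j}u_{n-1}\|^2 + R_2\,\nu\|\nabla_{\Gamma_{j'}}u_{n-1}\|^2 \;=\; R_2\,E_{n-1},$$
where the first equality holds because the sublattice just conditioned contributes zero gradient, and the last equality holds because $\nabla_{\Gamma_j}u_{n-1}=0$ after the previous step (so the $R_1$-term drops out and $\nu\|\nabla_{\Gamma_{j'}}u_{n-1}\|^2=E_{n-1}$). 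Since $R_2<1$, this yields $E_n\le R_2^{\,n-1}E_1$, with $E_1$ controlled by $E_0=\nu\|\nabla_{\Gamma_0}f\|^2+\nu\|\nabla_{\Gamma_1}f\|^2$ through one application of Proposition \ref{4prop8}.

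Next I would apply Proposition \ref{7prop1} to $u_n$. Since $u_{n+1}=\mathbb{E}^{\Gamma_i}u_n$ for the appropriate $i$, we obtain $\nu|u_{n+1}-u_n|^2=\nu|u_n-\mathbb{E}^{\Gamma_i}u_n|^2\le \tilde R\,E_n\le \tilde R R_2^{\,n-1}E_1$. Taking square roots gives $\|u_{n+1}-u_n\|_{L^2(\nu)}\le C(\sqrt{R_2})^{\,n}$ with $\sqrt{R_2}<1$, so $\sum_n\|u_{n+1}-u_n\|_{L^2(\nu)}<\infty$. Thus $(u_n)$ is Cauchy in $L^2(\nu)$ and converges to some $u_\infty$; moreover the function $\sum_n|u_{n+1}-u_n|$ has finite $L^2(\nu)$-norm by the triangle inequality, hence is $\nu$-a.e. finite, so the telescoping series converges absolutely $\nu$-a.e. and $u_n\to u_\infty$ $\nu$-a.e.

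Finally I would identify the limit. Because each $\mathbb{E}^{\Gamma_i}$ is an $L^2(\nu)$-contraction and $\nu|u_n-\mathbb{E}^{\Gamma_i}u_n|^2\le\tilde R\,E_n\to0$ for both $i=0,1$, passing to the $L^2$-limit gives $u_\infty=\mathbb{E}^{\Gamma_0}u_\infty=\mathbb{E}^{\Gamma_1}u_\infty$; being invariant under conditioning on both $\Gamma_0$ and $\Gamma_1$, $u_\infty$ is independent of the spins on each sublattice and so is $\nu$-a.e. constant. The DLR equation $\nu\mathbb{E}^{\Gamma_i}=\nu$ forces $\nu u_n=\nu f$ for every $n$, whence $u_\infty=\nu f$, which is the claimed convergence to the Gibbs measure. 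The \emph{main obstacle} is establishing the geometric decay of $E_n$: one must keep precise track of which sublattice gradient vanishes at each step so that the recursion is governed by the contractive factor $R_2<1$ rather than by $R_1\ge 1$; once this is secured, the passage to $L^2$ and then $\nu$-a.e. convergence, and the identification of the limit, are routine.
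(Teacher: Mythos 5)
Your proposal is correct and rests on the same two pillars as the paper's proof: the geometric decay of the sublattice gradient energy obtained by combining the vanishing of $\nabla_{\Gamma_k}(\mathbb{E}^{\Gamma_k}\cdot)$ with the sweeping-out inequality of Proposition \ref{4prop8} (so that the $R_1$-term drops and only the contractive factor $R_2<1$ survives), followed by the spectral gap inequality of Proposition \ref{7prop1} to control $\nu|\mathcal{P}^{n+1}f-\mathcal{P}^nf|^2$. Where you diverge is in the second half. The paper passes from the decay of $\nu|\mathcal{P}^{n+1}f-\mathcal{P}^nf|^2$ to $\nu$-a.e.\ convergence via Chebyshev and Borel--Cantelli on the sets $\{|\mathcal{P}^nf-\mathcal{P}^{n+1}f|\geq 2^{-n}\}$ (which is why it imposes the extra smallness condition $4R_2<\tfrac12$), and then identifies the limit only through a.e.\ arguments, requiring a truncation to bounded functions, dominated convergence, and a monotone-convergence extension. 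You instead sum the $L^2$-norms of the increments, which gives both $L^2(\nu)$-convergence and (via a.e.\ finiteness of $\sum_n|u_{n+1}-u_n|$) a.e.\ convergence in one stroke, needing only $R_2<1$; the $L^2$-convergence then makes the identification of the limit essentially immediate, since $u_\infty=\mathbb{E}^{\Gamma_0}u_\infty=\mathbb{E}^{\Gamma_1}u_\infty$ forces $u_\infty$ to be constant and the DLR equation fixes the constant as $\nu f$ without any truncation. Both routes are sound; yours is somewhat more economical and avoids the additional constraint on $J$, while the paper's stays entirely within the a.e.\ framework. One small point to make explicit if you write this up: Proposition \ref{4prop8} is stated only for $\nabla_{\Gamma_1}(\mathbb{E}^{\Gamma_0}f)$, so the mirrored version with $\Gamma_0$ and $\Gamma_1$ exchanged, which you correctly invoke, should be recorded (its proof is identical by the symmetry of the two sublattices, and the paper uses it implicitly as well).
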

 \begin{proof} We will follow closely  \cite{G-Z}. 
 We will  compute the variance of the   $\mathcal{P}^nf$ with respect to the product measure $\mathbb{E}^{\Gamma_k}$ for $k=0$ or $1$ when $n$ is odd or even respectively. For this we will use the spectral gap type inequality for the product measures    $\mathbb{E}^{ \Gamma_i,\omega},i=0,1$
presented in Proposition \ref{7prop1}. 
\begin{align*}\nonumber\nu\vert\mathcal{P}^n f- \mathcal{P}^{n+1} f\vert^2=&\nu\vert\mathcal{P}^n f- \mathbb{E}^{\Gamma_k}\mathcal{P}^{n} f\vert^2\leq   \tilde R\nu\| \nabla_{\Gamma_0} \mathcal{P}^n f
\|^2+  \tilde R \nu\| \nabla_{\Gamma_1} \mathcal{P}^nf
\|^2= \\ = & \tilde R \nu\| \nabla_{\Gamma_k} \mathcal{P}^nf\|^2=\tilde R \nu\| \nabla_{\Gamma_k} \mathbb{E}^{\Gamma_{1-k}} \mathcal{P}^{n-1}f\|^2\end{align*} 
 where $k$ above is $0$ or $1$ if $n$ is odd or even respectively. If we use the first sweeping out inequality of Proposition~\ref{4prop8} we get
\begin{align*}\nonumber\nu\vert\mathcal{P}^n f- \mathcal{P}^{n+1} f\vert^2\leq&\tilde R R_1\nu\| \nabla_{\Gamma_k}  \mathcal{P}^{n-1}f\|^2+\tilde R R_2\nu\| \nabla_{\Gamma_{1-k}} \mathcal{P}^{n-1}f\|^2=\tilde R R_2\nu\| \nabla_{\Gamma_{1-k}} \mathcal{P}^{n-1}f\|^2\end{align*} 
where we recall $R_2\leq \frac{JG_6}{1-4J} <1$. If we apply     $n-2$ more times   Proposition~\ref{4prop8} we obtain the following bound
\begin{align}\label{7eq2} \nu\vert\mathcal{P}^n f- \mathcal{P}^{n+1} f\vert^2\leq\tilde R R_2^{n-2}\left(R_1\nu\| \nabla_{\Gamma_1}f\|^2 +R_2\nu\| \nabla_{\Gamma_0} f\|^2\right)\end{align}
which converges to zero as $n$  goes   to infinity, because
 $R_2<1$. If we define the sets $$\Delta_n=\{\vert \mathcal{P}^nf- \mathcal{P}^{n+1} f\vert \geq \frac{1}{2^{n}}\}$$ we can calculate 
\begin{equation*}\nu (\Delta_n)=\nu\left( \left\{\vert \mathcal{P}^nf- \mathcal{P}^{n+1} f\vert \geq \frac{1}{2^{n}}\right\} \right)\leq 2^{2n}\nu \vert\mathcal{P}^nf- \mathcal{P}^{n+1} f\vert^2  \end{equation*}by Chebyshev inequality. If we use (\ref{7eq2}) to bound the last one we get   
\begin{equation*}\nu  (\Delta_n)\leq (4R_2)^{n-2}8 \tilde R \left(R_1\nu\| \nabla_{\Gamma_1}f\|^2 +R_2\nu\| \nabla_{\Gamma_0} f\|^2\right)  \end{equation*}and for $J$ sufficiently small  such that $4R_2\leq\frac{4JG_6}{1-4J}<\frac{1}{2}$ (recall that $R_2\leq\frac{JG_6}{1-4J}$) we have that \begin{equation*}\sum_{n=0}^\infty\nu (\Delta_n)\leq\left(\sum_{n=0}^\infty\frac{1}{2^n} \right) 64\tilde R  \left(R_1\nu\| \nabla_{\Gamma_1}f\|^2 +R_2\nu\| \nabla_{\Gamma_0} f\|^2\right)  <\infty
\end{equation*}   Thus, $$\{ \mathcal{P}^n f-\nu \mathcal{P}^n f\}_{n \in \mathbb{N}}$$ 
 converges $\nu-$almost surely  by the  Borel-Cantelli lemma.  Furthermore,  $$\mathcal{P}^nf\rightarrow \vartheta\ (f) \   \   \   \   \   \nu-\text{a.e.}$$ We will first show that $\theta (f)$  is a constant, which means that it does not depend on variables on $\Gamma_0$ or $\Gamma_1$. We first notice that  $\mathcal{P}^n(f)$ is a function on $\Gamma_0$ or $\Gamma_1$ when $n$  is odd or even respectively. This implies that the limits  $$\vartheta_{o}(f):=\lim_{n \text{\ odd}, n\rightarrow \infty}\mathcal{P }^nf \text{ \ and \ }\vartheta_e(f):=\lim_{n \text{\ even}, n\rightarrow \infty}\mathcal{P}^nf$$ do not depend on  variables on $\Gamma_0$ and $\Gamma_1$ respectively.    However, since  the two  subsequences $\{\mathcal{P}^nf\}_{n\text{\ even}}$ and $\{\mathcal{P}^nf\}_{n\text{\ odd}}$  converge to $\vartheta(f)$ $\nu-$a.e.  we conclude that $$\vartheta_{o}(f)=\vartheta(f)=\vartheta_{e}(f)$$
 which implies that $\theta (f)$ is a constant. From that we obtain that \begin{equation}\label{7eq3} \nu \left(\vartheta (f) \right)=\vartheta(f)
\end{equation} 
Since the sequence $\{ \mathcal{P}^n f\}_{n \in \mathbb{N}}$ 
 converges $\nu-$almost, the same holds for  the sequence $\{ \mathcal{P}^n f-\nu \mathcal{P}^n f\}_{n \in \mathbb{N}}$. 

It remains to show that $\vartheta(f)=\nu (f)$. At first we show this for  positive bounded functions  $f$. In this case we have 
 \begin{align}\label{7eq4}\lim_{n\rightarrow \infty}(\mathcal{P}^n f-\nu \mathcal{P}^n f)=\vartheta (f)-\nu\left(\vartheta(f) \right) =\vartheta (f)-\vartheta (f)=0\end{align}
 by the dominated     convergence theorem and  (\ref{7eq3}). On the other hand, we also have 
 \begin{align}\label{7eq5}\lim_{n\rightarrow \infty}( \mathcal{P}^n f-\nu \mathcal{P}^n f)=\lim_{n\rightarrow \infty}(\mathcal{P}^n f-\nu f)=\vartheta (f)-\nu (f)
\end{align} 
where above  we used the definition of the Gibbs measure $\nu$. From (\ref{7eq4}) and (\ref{7eq5}) we get that 
$$\vartheta (f)=\nu (f)$$
for bounded functions $f$. We now extend it to   no bounded positive functions $f$. Consider $f_k(x):=\max\{f(x),k\}$ for any $k\in \mathbb{N}$. Then  $$\vartheta(f_{k})=\lim_{n\rightarrow \infty}\mathcal{P}^n f_{k}= \nu f_{k},  \  \   \nu \  \  a.e. $$since $f_k(x)$ is bounded by $k$. 
 Then since $f_k$ is increasing on $k$, by the monotone convergence theorem we   obtain 
\begin{align*}\vartheta(f)=\lim_{k\rightarrow \infty}\vartheta(f_k)=\lim_{k\rightarrow \infty}\nu(f_{k})=\nu(\lim_{k\rightarrow \infty}f_{k})=\nu(f)  \  \   \nu \  \  a.e. \end{align*}
The assertions then can be extended to no positive functions $f$ by writing $f=f^+-f^-$, where $f^+=\max\{f,0\}$ and $f^-=-\min\{f,0\}$.  
 \end{proof} 
 We can now proceed with the proof of the main theorem.
\subsection{proof Proposition \ref{proposition}\label{finalproof}}
The  proof of the main result will be based on the iterative method developed by  
 Zegarlinski in \cite{Z1} and \cite{Z2} (see also \cite{Pa1} and \cite{I-P} for similar application). We will start with a lemma that shows the iterative step.

Denote  $Ent_\mu (f):=\mu(f^2\log\frac{f^2}{\mu f^2})$ the entropy of a function $f$ with respect to a measure $\mu$. 
\begin{lem}
Assume  $\mathcal{P}$ as in (\ref{7eq1}). For any   $n \geq 1$,
\begin{align}
\label{7eq6}
\mathcal{P}^n[f\log f ] 
= &\sum_{ m=0}^{n-1} \mathcal{P}^{n-m-1}  [Ent_{\E^{\Gamma_k}}(\mathcal{P}^m f)]
+\mathcal{P}^n f\log \mathcal{P}^n f
\end{align}where $k$ above is $0$ or $1$ if $n$ is odd or even respectively.
\end{lem}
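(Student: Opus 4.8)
The plan is to iterate a single elementary identity along the alternating sequence of conditional expectations that defines $\mathcal{P}^n$. The only tool required is the single-step conditional-entropy identity: for a conditional expectation $\E^{\Gamma_k}$ (linear, positivity preserving, $\E^{\Gamma_k}1=1$) and any $g\geq 0$,
$$\E^{\Gamma_k}\big[g\log g\big]=Ent_{\E^{\Gamma_k}}(g)+\big(\E^{\Gamma_k}g\big)\log\big(\E^{\Gamma_k}g\big),$$
where the bracketed remainder $Ent_{\E^{\Gamma_k}}(g):=\E^{\Gamma_k}(g\log g)-(\E^{\Gamma_k}g)\log(\E^{\Gamma_k}g)$ is precisely the conditional entropy appearing in the statement; this is nothing but a rearrangement of the definition. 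The idea is to apply this identity once for every factor of $\mathcal{P}^n$, harvesting one conditional-entropy term at each stage.

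To organise the iteration I would write $\Pi_j:=\E^{\Gamma_0}$ when $j$ is odd and $\Pi_j:=\E^{\Gamma_1}$ when $j$ is even, so that by \eqref{7eq1} one has $\mathcal{P}^n=\Pi_n\Pi_{n-1}\cdots\Pi_1$ and $\Pi_{m+1}\mathcal{P}^m f=\mathcal{P}^{m+1}f$. First I would introduce the telescoping quantities
$$A_m:=\big(\Pi_n\Pi_{n-1}\cdots\Pi_{m+1}\big)\big[\mathcal{P}^m f\log \mathcal{P}^m f\big],\qquad m=0,1,\dots,n,$$
with the convention that the product is empty (the identity) when $m=n$. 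Then $A_0=\mathcal{P}^n[f\log f]$ is the left-hand side of \eqref{7eq6}, while $A_n=\mathcal{P}^n f\log\mathcal{P}^n f$ is its last term, so the whole statement reduces to evaluating the telescoping differences $A_m-A_{m+1}$.

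The heart of the computation is one such increment. Splitting off the innermost factor $\Pi_{m+1}$ from the product defining $A_m$ and applying the single-step identity to $g=\mathcal{P}^m f$ with $\E^{\Gamma_k}=\Pi_{m+1}$, together with $\Pi_{m+1}\mathcal{P}^m f=\mathcal{P}^{m+1}f$, yields
$$A_m=\big(\Pi_n\cdots\Pi_{m+2}\big)\big[Ent_{\Pi_{m+1}}(\mathcal{P}^m f)\big]+A_{m+1}.$$
Summing over $m=0,\dots,n-1$ then telescopes to
$$\mathcal{P}^n[f\log f]=\sum_{m=0}^{n-1}\big(\Pi_n\cdots\Pi_{m+2}\big)\big[Ent_{\Pi_{m+1}}(\mathcal{P}^m f)\big]+\mathcal{P}^n f\log\mathcal{P}^n f,$$
which is \eqref{7eq6}. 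Here the coefficient $\Pi_n\cdots\Pi_{m+2}$ is an alternating product of $n-m-1$ conditional expectations, denoted $\mathcal{P}^{n-m-1}$, whose outermost factor $\Pi_n$ is $\E^{\Gamma_0}$ or $\E^{\Gamma_1}$ according as $n$ is odd or even—this is the meaning of the phrase ``$k$ is $0$ or $1$''—and the entropy in the $m$-th summand is taken with respect to the operator $\Pi_{m+1}$ actually applied at step $m+1$.

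The one point that genuinely needs care is the parity bookkeeping: the subscript $\Gamma_k$ in each entropy term alternates with $m$, and the coefficient product $\Pi_n\cdots\Pi_{m+2}$ coincides with the canonical operator $\mathcal{P}^{n-m-1}$ only up to interchanging the roles of $\Gamma_0$ and $\Gamma_1$. Since $\Gamma_0$ and $\Gamma_1$ enter all the preceding estimates symmetrically—in particular the log-Sobolev type bound of Proposition \ref{6prop2} holds for both $k=0,1$—this relabeling is immaterial for the subsequent application. An equivalent route is a direct induction on $n$, peeling the outermost $\Pi_n$ and invoking the identity for $n-1$; I prefer the telescoping form above because it exhibits all $n$ entropy contributions at once and avoids re-deriving the parity alternation at every inductive stage.
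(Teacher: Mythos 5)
Your telescoping argument is correct and rests on exactly the same one-step identity $\E^{\Gamma_k}[g\log g]=Ent_{\E^{\Gamma_k}}(g)+(\E^{\Gamma_k}g)\log(\E^{\Gamma_k}g)$ that the paper uses, the paper simply running the iteration as an induction on $n$ (peeling the outermost factor) rather than summing the increments $A_m-A_{m+1}$ all at once. Your parity bookkeeping -- observing that the entropy in the $m$-th summand is taken with respect to $\Pi_{m+1}$, which alternates with $m$, and that the prefactor $\Pi_n\cdots\Pi_{m+2}$ agrees with $\mathcal{P}^{n-m-1}$ only up to swapping $\Gamma_0$ and $\Gamma_1$ -- is in fact slightly more careful than the paper's own statement, and as you note the symmetry of the two sublattices makes this harmless in the subsequent application.
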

\begin{proof}

One observes that for any $\Lambda\subset \Z^2 $
\begin{equation}
\label{7eq7}
Ent_{\E^\Lambda}(g) =\mathbb{E}^{\Lambda}\left(g\log \frac{g}{\mathbb{E}^{\Lambda}g}\right)=\E^\Lambda[g \log g]  -\\ (\E^\Lambda g)\log (\E^\Lambda g)
\end{equation}
 The statement (\ref{7eq6})  for $n=1$ can  be trivially derived from (\ref{7eq7}) if we put $\Lambda=\Gamma_0$ and $g= f$. Assuming (\ref{7eq6}) is
true for some $n \geq 0$, we prove it for $n+1$.
  Apply \eqref{7eq7} with $\Lambda = \Gamma_k$ and $\mathcal{P}^n f$ in the place of $g$, where $k$ above is $0$ or $1$ if $n$ is odd or even respectively:\begin{align*}
 \E^{\Gamma_k}[ (\mathcal{P}^n f)\log  (\mathcal{P}^n f)] &= Ent_{\E^{\Gamma_k}}(\mathcal{P}^n f) + (\E^{\Gamma_k} \mathcal{P}^n f)\log  (\E^{\Gamma_k} \mathcal{P}^n f)\\ &=Ent_{\E^{\Gamma_k}}(\mathcal{P}^n f) + (  \mathcal{P}^{n+1} f)\log  (  \mathcal{P}^{n+1} f)
\end{align*}
Using this, and applying $\E^{\Gamma_k}$ to \eqref{7eq6} we   obtain
\eqref{7eq6} for $n+1$.   \end{proof}

Using Proposition \ref{7prop2} we have $\mathcal{P}^n[f\log f] \to \nu[f\log f]$
and $\mathcal{(P}^n f)\log(\mathcal{(P}^n f) \to \nu[f]\log \nu[f]$, $\nu$-a.e. From this
and Fatou's lemma, \eqref{7eq6} gives
\begin{align}
Ent_\nu(f^{2}) &\leq \liminf_{n \to \infty}
\bigg\{
\nu\bigg[
\sum_{ m=0}^{n-1} \mathcal{P}^{n-m-1}  [Ent_{\E^{\Gamma_k}}(\mathcal{P}^m f)]
\bigg]
\bigg\}        \nonumber
\\
&= \liminf_{n \to \infty}\bigg\{
\sum_{m=0}^{n-1} \nu[Ent_{\E^{\Gamma_k}}(\mathcal{P}^m f^{2})]
\label{7eq8}
\bigg\}
\end{align}
where we used the fact that $\nu$ is a Gibbs measure to obtain
the last equality.
If we use Proposition \ref{6prop2} to bound the first term of the first sum we have
\[
\nu Ent_{\E^{\Gamma_0}}(f^2) \leq \tilde C \nu\|\nabla_{\Gamma_0}
f
\|^2+\tilde C\nu\| \nabla_{\Gamma_1} f
\|^2
\]
  Similarly, for $m \ge 1$, we can use Proposition \ref{6prop2}
and then we get
\begin{align*}
\nu[Ent_{\E^{\Gamma_k}}(\mathcal{P}^{m} f^2)]
&\le \tilde C\nu \|\nabla_{\Gamma_k} \sqrt{\mathcal{P}^m f^2}\|^2
\le \tilde C [C_1 C_2^{m-1}  \nu \|\nabla_{\Gamma_1} f\|^2 
+C_2^{m} \nu \|\nabla_{\Gamma_0} f\|^2 ]
\end{align*}
where, for the last inequalities we used Proposition  \ref{5lem3} and induction.
Substituting in \eqref{7eq8}, we obtain (recall that $0< C_2 < 1$)
\[
Ent_\nu(f^2) \leq 
\frac{\tilde C C_1}{C_{2}(1-C_2)} \nu \|\nabla_{\Gamma_1} f\|^2 
+ \frac{\tilde C}{1-C_2} \nu \|\nabla_{\Gamma_0} f\|^2 
\le \overline{C} \, \nu \|\nabla f\|^2
\]
where $\overline C$ is the largest of the two coefficients.
This ends the proof of  the log-Sobolev inequality for $\nu$.
   
   \qed

\bibliographystyle{alpha}

\end{document}